\newtheorem{question}{Question}
\newtheorem{corollary}[question]{Corollary}
\newtheorem{conjecture}[question]{Conjecture}
\newtheorem{theorem}[question]{Theorem}
\newtheorem{proposition}[question]{Proposition}
\newtheorem{lemma}[question]{Lemma}
\newtheorem{remark}[question]{Remark}
\newtheorem{claim}[question]{Claim}
\newtheorem{definition}[question]{Definition}
\numberwithin{question}{section}
\numberwithin{equation}{section}
\newcommand\Z{\mathbb{Z}}
\newcommand\E{\mathbb{E}}
\newcommand\Prb{\mathbb{P}}
\newcommand\cP{{\mathcal P}}
\newcommand\sfrac[2]{{\textstyle\frac{#1}{#2}}}
\title{Bootstrap percolation in random geometric graphs}
\author{Victor Falgas-Ravry\footnote{Ume{\aa} University, Ume{\aa}, Sweden. Email: \texttt{victor.falgas-ravry@umu.se}. Research supported by Swedish Research Council grant VR 2016-03488.} \and  Amites Sarkar\footnote{Western Washington University, Bellingham WA, USA. Email: \texttt{amites.sarkar@wwu.edu}.}}
\begin{document}
	\maketitle
	\setlength{\unitlength}{1in}
\begin{abstract}
Following Bradonji\'c and Saniee, we study a model of bootstrap percolation on the Gilbert random geometric graph on the $2$-dimensional  torus. In this model, the expected number of vertices of the graph is $n$, and the expected degree of a vertex is $a\log n$ for some fixed $a>1$. Each vertex is added with probability $p$ to a set $A_0$ of initially infected vertices. Vertices subsequently become infected if they have at least $ \theta a \log n $ infected neighbours.  Here $p, \theta \in [0,1]$ are taken to be fixed constants.

We show that if  $\theta < (1+p)/2$, then a sufficiently large local outbreak leads with high probability to the infection spreading globally, with all but $o(n)$ vertices eventually becoming infected. On the other hand, for $ \theta > (1+p)/2$, even if one adversarially infects every vertex inside a ball of radius $O(\sqrt{\log n} )$, with high probability the infection will spread to only $o(n)$ vertices beyond those that were initially infected.

In addition we give some bounds on the $(a, p, \theta)$ regions ensuring the emergence of large local outbreaks or the existence of islands of vertices that never become infected. We also give a complete picture of the (surprisingly complex) behaviour of the analogous $1$-dimensional bootstrap percolation model on the circle. Finally we raise a number of problems, and in particular make a conjecture on an `almost no percolation or almost full percolation' dichotomy which may be of independent interest.\\
\textbf{Keywords:} Random Geometric Graphs, Bootstrap Percolation, Random Processes\\

\end{abstract}
	\section{Introduction}
	\subsection{Background}
	Bootstrap percolation encompasses a widely-studied family of cellular automata on networks. Originally introduced by Chalupa, Leath and Reich in 1979~\cite{ChalupaLeathReich79} in the context of magnetic systems, it has since been used to model a great variety of phenomena --- from the spreading of fads or beliefs in social networks~\cite{DreyerRoberts09, Granovetter78, RamosShaoReisAnteneodoAndradeHavlinMakse15}, to financial contagion and default on obligations in economic networks~\cite{AminiContMinca16}, to the activation of neurons in the brain~\cite{Amini10, Kozma07} or to the spread of viruses in human populations~\cite{DreyerRoberts09}. This plethora of applications has led to significant work on bootstrap percolation from network scientists, physicists, engineers and computer scientists as well as mathematicians.

	Formally an $r$-threshold bootstrap process on a graph $G=(V,E)$  is defined as follows. At the time $t=0$, an initial set of vertices $A_0\subseteq V$ is infected (or activated, if one prefers to avoid contagious connotations). Then at each time step $t\geq 0$, the vertices of $G$ having at least $r$ neighbours in $A_t$ become infected and are added to $A_t$ to form $A_{t+1}$. The infection thus spreads throughout the graph, and results in a set $A_{\infty}:=\bigcup_{t\geq 0} A_t$ of eventually infected vertices.

	If $G$ is a finite graph, the key question of interest is then: what proportion of vertices of $G$ eventually become infected? This obviously depends on the choice of the set of initially infected vertices $A_0$. In most of the work on bootstrap percolation to date, $A_0$ is chosen according to a Bernoulli process on the vertices of $G$: each vertex $v\in V$ is included in $A_0$ with probability $p$ independently of all other vertices. One then asks for which initial infection probability $p$ does the infection spread to `most' of $V$ (\emph{almost percolation}, $\vert A_{\infty}\vert =\vert V\vert(1+o(1))$), or to all of $V$ (\emph{percolation}, $\vert A_{\infty}\vert =\vert V\vert$) with high probability (\emph{w.h.p.}, meaning with probability $1-o(1)$).

	Aizenman and Lebowitz~\cite{AizenmanLebowitz88} were the first to investigate this kind of question when $G=[n]^d$, the $d$-dimensional $n \times n \times \cdots \times n$ grid graph. In a landmark result in 2003, Holroyd~\cite{Holroyd03} showed that for $2$-threshold bootstrap percolation on $[n]^2$, $p_c([n]^2, 2)=\frac{\pi^2}{18 \log n}$ was a sharp threshold for percolation, in the sense that if $p = \frac{c}{\log n}$ is the initial infection probability and $c>0$ is fixed, then if $c<\frac{\pi^2}{18}$  w.h.p.\ the infection does not spread to the entire square grid, while if $c>\frac{\pi^2}{18}$ then w.h.p.\ every vertex of $[n]^2$ eventually becomes infected. Surprisingly, this result disproved predictions for the value of the critical threshold that had been made based on numerical simulations for the problem. Holroyd's results were then extended to other dimensions $d$ and thresholds $r$  by Balogh, Bollob\'as, Duminil-Copin and Morris~\cite{BaloghBollobasDuminilCopinMorris12}.

	 Motivated by the applications of bootstrap percolation to the modelling of real-life network phenomena, there has been growing interest in the past decade for the study of bootstrap percolation on random graphs. An important work in this vein was the study of bootstrap percolation on Erd{\H o}s--R\'enyi random graphs $G_{n,q}$ by Janson, {\L}uczak, Turova and Vallier~\cite{JansonLuczakTurovaVallier12} in 2012. Recall that the random graph $G_{n,q}$ is obtained by taking $[n]:=\{1,2, \ldots n\}$ as a vertex-set, and including each pair $ij$ as an edge of the random graph with probability $q$, independently of all the other pairs. The authors of~\cite{JansonLuczakTurovaVallier12} determined inter alia for every threshold $r\geq 2$ and $q$ the critical thresholds $p=p(n,r,q)$ for the infection probability at which the size of $A_{\infty}$ goes  from w.h.p.\ $o(n)$ to w.h.p.\ $n-o(n)$ (almost percolation) and from w.h.p.\ $n-o(n)$ to w.h.p.\ $n$ (percolation).

	 Bootstrap percolation has been rigorously studied on several other random graph models: random regular graphs~\cite{BaloghPittel07, Janson09}, power-law random graphs~\cite{AminiFountoulakis14}, Bienaym\'e--Galton--Watson trees~\cite{BollobasGundersonHolmgrenJansonPrzykucki14}, random graphs with specified vertex degrees~\cite{Janson09}, toroidal grids with random long edges added in (a special case of the Kleinberg model)~\cite{JansonKozmaRuszinkoSokolov19}, inhomogeneous random graphs~\cite{FountoulakisKangKochMakai18}, amongst others.  A motivation for the latter two models is that they may have degree distributions or spatial characteristics that more closely resemble those of the real-life networks motivating the study of bootstrap percolation.

	 For similar reasons, there has been interest in bootstrap percolation models on random \emph{geometric} graphs. Indeed, many real-life networks have a distinctly spatial structure that affects their behaviour and properties. Thus it is natural to study bootstrap percolation on random graph models in which geometry plays a role. This was first done by Bradonji\'c and Saniee~\cite{BradonjicSaniee14} in 2014, who introduced a model for bootstrap percolation on random geometric graphs that is the focus of the present paper.

	 The Gilbert disc model is the most widely-studied random geometric graph model, and was first defined by Gilbert~\cite{Gilbert61} in 1961; indeed `random geometric graph' without any further qualifier usually refers to the Gilbert model. Given a measurable metric space $\Omega$, a Gilbert random geometric graph $G_r(\Omega)$ is obtained by taking as the vertex-set the point-set $\mathcal{P}$ resulting from a Poisson point process  of intensity $1$ on $\Omega$. Two vertices $v,v'$ in $\mathcal{P}$ are then joined by an undirected edge if their distance in $\Omega$ is less than $r$. In other words, the neighbours of a vertex $v$ are precisely those points of $\mathcal{P}\setminus\{v\}$ that lie inside the ball of radius $r$ centred at $v$.

	 Gilbert studied his model in the case where $\Omega=\mathbb{R}^2$, the $2$-dimensional plane equipped with the usual Euclidean distance and Lebesgue measure. The study of the Gilbert model and related random geometric graph models on such unbounded spaces is known as \emph{continuum percolation}, and is the subject of a monograph of Meester and Roy~\cite{MeesterRoy96}. In a different direction, researchers have been interested in random geometric graph models in bounded, finite-dimensional spaces, in particular when $\Omega$ is either $S_n^d:=[0,n^{\frac{1}{d}}]^d$, the $d$-dimensional box of volume $n$, or $T_n^d:=\left(\mathbb{R}/n^{1/d}\mathbb{Z}\right)^d$, the $d$-dimensional torus of volume $n$, where $d$ is fixed and $n$ is large. For both of these choices of $\Omega$, standard results on concentration of the Poisson distribution imply that $G_r(\Omega)$ w.h.p.\  has $n+o(n)$ vertices.

	 In the case $\Omega=T_n^d$ (where we can ignore boundary effects and all vertices look the same), $G_r(\Omega)$ can be viewed as a natural geometric analogue of the Erd{\H o}s--R\'enyi random graph. Let $\alpha_d$ denote the volume of the $d$-dimensional unit ball. Then the expected degree of a vertex in $G_r(T_n^d)$ is precisely $\alpha_d r^d$. A classical result of Penrose~\cite{Penrose97} established that the threshold for connectivity for $G_r(T_n^d)$  occurs at   $\alpha_d r^d=\log n$: if $\alpha_d r^d=a\log n$ and $a>0$ is fixed, then for $a<1$ w.h.p.\  $G_r(T_n^d)$ contains isolated vertices and thus fails to be connected, while for $a>1$ w.h.p.\ $G_r(T_n^d)$ is connected. Much more is known about Gilbert random geometric graphs (which, together with the closely related $k$-nearest neighbour model, have been applied in a variety of contexts, for example to model sensor networks~\cite{BalisterBollobasSarkarKumar07} and  ad hoc wireless networks~\cite{XueKumar04}, and for cluster analysis in spatial statistics~\cite{GonzalesBarriosQuiroz03}), and we refer an interested reader to the monograph of Penrose~\cite{Penrose03} devoted to the topic.

	 Bradonji\'c and Saniee considered the Gilbert disc model $G_r(T_n^2)$ where $r$ is given by $\pi r^2=a\log n$, for some constant $a>1$ and $n$ is large. They studied $\theta a \log n$-threshold bootstrap percolation on this host graph --- i.e. where the threshold for infection is a proportion $\theta$ of the expected degree of a vertex. This is somewhat in contrast to previous work, where typically the threshold for infection was fixed rather than growing with the number of vertices $n$, but may be a more suitable choice of parameter for modelling situations such as the spread of a fad or fashion in a social network.

	  Bradonji\'c and Saniee's paper featured a mixture of rigorous results and simulations. On the theoretical side, they proved two results. First of all, they determined~\cite[Theorem 1]{BradonjicSaniee14} an explicit function $f_1(a, \theta)$ such that if the initial infection probability $p$ is fixed and satisfies $p<f_1(a, \theta)$, then w.h.p.\ the infection does not spread at all: $A_{\infty}=A_0$, and every vertex that is initially uninfected stays uninfected forever. Secondly, Bradonji\'c and Saniee determined~\cite[Theorem 2]{BradonjicSaniee14} a second function $f_2(a, \theta)$ such that if $p$ is fixed and satisfies $p>f_2(a, \theta)$, then w.h.p.\ there is full percolation: every vertex becomes infected. The function $f_1(a,\theta)$ in the first of these results is easily seen to be best possible (see Proposition~\ref{prop: start} below). However the bound $f_2(a, \theta)$ in the second result seems far from optimal --- indeed, the simulations performed by Bradonji\'c and Saniee suggest as much.

	 Besides Bradonji\'c and Saniee's 2014 paper, comparatively little mathematical work appears to have been done on bootstrap percolation in random geometric graphs. In a 2016 work Candellero and Fountoulakis~\cite{CandelleroFountoulakis16} studied $r$-threshold bootstrap percolation on hyperbolic random geometric graphs for constant $r$, and determined for their model a critical probability $p_c$ such that if the initial infection probability $p$ satisfies $p\ll p_c$ then w.h.p.\ the infection does not spread at all, while if $p\gg p_c $ then w.h.p.\ the infection spreads to a strictly positive proportion of the vertices. More recently, Koch and Lengler~\cite{KochLengler16, KochLengler21} studied a localised form of $r$-threshold bootstrap percolation on geometric inhomogeneous random graphs, with $r$ a fixed constant and  where the set of initially infected vertices is located within some bounded source region $B$ (rather than the whole space), and determined a similar critical threshold $p_c$ below which w.h.p.\ an infection does not spread at all, and above which w.h.p.\ an infection spreads to a positive proportion of all vertices. Finally in a very recent PhD thesis, Whittemore~\cite{WhittemoreThesis} studied   bootstrap percolation in the Gilbert random geometric graph when the infection threshold  is constant, and determined amongst other things the thresholds at which the model's typical behaviour transitions from almost no percolation to almost percolation.  As far as we are aware, this is the (surprisingly limited) extent of rigorous mathematical study of bootstrap percolation on random geometric graph models (though there also exist some experimental and simulation results for bootstrap percolation on geometric scale-free networks, see e.g.~\cite{GaoZhouHu15}).

\subsection{The Bradonji\'c--Saniee  model}
For the reader's convenience, we restate here the precise model we shall be studying in this paper.

Let $T_n^d:=\left(\mathbb{R}/n^{1/d}\mathbb{Z}\right)^d$ denote the $d$-dimensional torus of hypervolume $n$. Given a parameter $r$, a Gilbert random geometric graph $G_r(T_n^d)$  on $T_n^d$ is obtained as follows: we let its vertex set $\cP$ be the result of a Poisson point process of intensity $1$ on $T_n^d$, and join vertices $u,v\in \cP$ by an edge if their distance (in the torus) is less than $r$. For compactness of notation, we use $G_{n,r}^d$ to denote $G_r(T_n^d)$.

Let $a>1$ and $d\in \mathbb{N}$ be fixed. Let $r$ be given by the relation $\alpha_d r^d=a\log n$, where $\alpha_d$ is the volume of the $d$-dimensional unit ball. Note that for this choice of parameters the expected total number of vertices in $G_{n,r}^d$ is $n$, while the expected degree of a vertex in $G_{n,r}^d$ is $a\log n$. Further, by classical results of Penrose~\cite{Penrose97}, $G_{n,r}^d$ is w.h.p.\ connected.

Bradonji\'c and Saniee~\cite{BradonjicSaniee14} introduced the following model of bootstrap percolation on $G_{n,r}^d$: let $p, \theta\in [0,1]$ be fixed. At time $t=0$, let each vertex of $G_{n,r}^d$ be infected independently at random with probability $p$. Denote by $A_0$ this set of initially infected vertices. The infection then spreads through the graph $G_{n,r}^d$ as follows: at each time step $t>0$, all vertices of $G_{n,r}^d$ which have at least $\theta a \log n$ infected neighbours (i.e.\ neighbours in the infected set $A_{t-1}$) become infected themselves and are added to $A_{t-1}$ to form the set $A_t$. We denote by $A_{\infty}=\bigcup\{A_t:\ t\in \mathbb{Z}_{\geq 0}\}$ the set of all vertices of $G_{n,r}^d$ that eventually become infected under this process.

With $a, p, \theta, d$ fixed the main question of interest in this model is: what is the typical size of $A_{\infty}$ for large $n$? In this paper we investigate this question in detail in dimensions $d=1$ and $d=2$.
\subsection{Contributions of this paper}
Our main contribution in this paper is identifying in dimension $d=2$ the threshold at which a sufficiently large \emph{local} outbreak can cascade and lead to a \emph{global infection}. Say that a ball $B$ in $T_n^2$ is \emph{infected} if all vertices of $G_{n,r}^2$ that lie inside $B$ are infected.
\begin{theorem}\label{theorem: (1+p)/2 threshold}
Let $a, \theta, p$ be fixed. Then the following hold.
\begin{enumerate}[(i)]
	\item If $\theta<\frac{1+p}{2}$, then there exists a constant $C=C(a, \theta, p)$ such that w.h.p.\ if any ball $B$ in $T_n^2$ of radius $Cr$ is infected (either artificially or as a result of the bootstrap percolation process), then all but $o(n)$ vertices of $G_{n,r}^2$ eventually become infected. Furthermore, when the infection stops, all connected components of uninfected vertices in $G_{n,r}^2[\mathcal{P}\setminus A_{\infty}]$ have Euclidean diameter $O(\sqrt{\log n})$ in $T_n^2$.
	\item If $\theta>\frac{1+p}{2}$, then for every constant $C>0$, w.h.p.\ even if one adversarially selects a ball $B$ in $T_n^2$ of radius $Cr$ and infects all the vertices it contains, only $o(n)$ additional vertices of $G_{n,r}^2$ become infected in the bootstrap percolation process starting from the initially infected set $A_0\cup (B\cap \mathcal{P})$.  What is more, all connected components of $G_{n,r}^2[A_{\infty}\setminus \left(A_0\cup B\right)]$ have Euclidean diameter $O(\sqrt{\log n})$ in $T_n^2$.
\end{enumerate}
\end{theorem}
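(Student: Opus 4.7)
The intuition behind the critical threshold $\theta=(1+p)/2$ is a geometric ``half-and-half'' argument: for a vertex $v$ lying at the boundary of an infected region $S\subseteq T_n^2$ whose boundary is, on the scale of $r$, approximately flat, the ball $B(v,r)$ splits into two halves of area $\pi r^2/2$. In the inside half, all of $v$'s neighbours lie in $S$ and are infected; in the outside half, only the $A_0$-fraction $p$ are infected. Thus the expected proportion of infected neighbours of $v$ is $\tfrac{1}{2}+\tfrac{p}{2}=(1+p)/2$, and whether $\theta$ lies below or above this value determines whether the infection can advance across the boundary. Both parts of the proof will formalise this intuition on a uniform ``good event'' $\mathcal{E}$ on $G_{n,r}^2$, holding w.h.p.\ by standard Chernoff bounds plus a union bound over a fine net of $T_n^2$, which prescribes that in every ball $B(y,s)$ with $r/\log n\leq s\leq 2r$, both $|\cP\cap B(y,s)|$ and $|A_0\cap B(y,s)|$ are within a multiplicative $(1\pm\eta)$ of their expectations, for a small tolerance $\eta$ to be fixed.

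For part (i), set $\delta_0=(1+p)/2-\theta>0$. I would choose $C$ large and $\gamma>0$ small, depending on $\delta_0$, such that for any ball $B(x_0,R)$ of radius $R\geq Cr$ and any $v$ at distance at most $R+\gamma r$ from $x_0$, the lens area $|B(v,r)\cap B(x_0,R)|$ exceeds $(\tfrac{1}{2}-\eta)\pi r^2$, where $\eta$ is small enough that $(1-p)(\tfrac{1}{2}-\eta)+p>\theta+\delta_0/4$. On $\mathcal{E}$ this yields a deterministic local growth lemma: if $B(x_0,Cr+k\gamma r)$ is fully infected at time $k$, then every vertex in $B(x_0,Cr+(k+1)\gamma r)$ has at least $(\theta+\delta_0/8)a\log n$ infected neighbours and so is infected at time $k+1$. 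Iterating $O(\sqrt{n}/r)$ times, the infection wraps around the torus and covers $\cP$ except possibly for a small set of vertices in the exceptional region $\mathcal{E}^c$; since $r=\Theta(\sqrt{\log n})$, the $O(\sqrt{\log n})$ diameter bound on uninfected clusters is automatic from the scale on which $\mathcal{E}$ is defined. Crucially, the union bound is taken once, up front, over positions in a net of $T_n^2$ rather than over initial balls, so the subsequent growth argument is purely deterministic conditional on $\mathcal{E}$.

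For part (ii), with $\delta_0=\theta-(1+p)/2>0$, the same geometric calculation yields on $\mathcal{E}$ a symmetric ``no-growth'' statement: for any vertex $v$ just outside a convex infected region $S$, the number of infected neighbours of $v$ arising from $S\cup A_0$ is at most $((1+p)/2+O(\eta))a\log n<\theta a\log n$. From this I would deduce via a Peierls-type contour argument that any connected cluster in $A'_\infty\setminus(A_0\cup B')$ of Euclidean diameter $\gg\sqrt{\log n}$ would correspond to an ``advancing front'' traversing many concentric rings of radius $r$ outside $B$, each requiring a Chernoff-atypical event of probability $n^{-c}$; summing over the resulting contour shapes shows such clusters do not exist w.h.p. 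The main technical subtlety is the interplay with background spread from $A_0$ alone when $p>f_1(a,\theta)$: I would handle this by monotonically coupling $A'_\infty$ with the background process $A_\infty$, first applying the contour argument to the background process to show $|A_\infty\setminus A_0|=o(n)$ with only $O(\sqrt{\log n})$-diameter clusters, then adding the front-advancement bound to show $|A'_\infty\setminus(A_\infty\cup B')|=o(n)$, yielding $|A'_\infty\setminus(A_0\cup B')|=o(n)$ as claimed. The hardest step is the Peierls counting: it requires showing that the effective exponential decay of each ring-crossing event dominates the entropy of contour shapes even when $\delta_0$ is small, but because contours of diameter $\ell$ in $T_n^2$ have at most $\exp(O(\ell/r))$ realisations on a discretisation at scale $r$, a fixed positive Chernoff exponent per ring suffices.
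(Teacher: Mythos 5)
Your geometric intuition (the half-disc computation, the linear growth lemma for part~(i), and the contour argument for part~(ii)) matches the spirit of the paper, but the overall strategy has a genuine gap at a step the paper's tiling framework is designed precisely to handle, and this gap affects both parts.

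The good event $\mathcal{E}$ that you propose --- every ball $B(y,s)$ in a fine net with $r/\log n\leq s\leq 2r$ has point-count and infected-count within a factor $(1\pm\eta)$ of the mean --- does \emph{not} hold w.h.p.\ for the small tolerances $\eta$ you will need. A ball of radius comparable to $r$ has area $\Theta(\log n)$, so a $(1\pm\eta)$-deviation occurs with probability $\sim n^{-c(\eta)a}$, where $c(\eta)\approx \eta^2/2$ as $\eta\to 0$. The tolerance $\eta$ must shrink with $\delta_0=|\theta-(1+p)/2|$; hence when $\theta$ is close to $(1+p)/2$ the exponent $c(\eta)a$ drops below $1$, and the union bound over $\Theta(n/\log n)$ net positions fails. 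In fact w.h.p.\ there are $n^{1-c(\eta)a+o(1)}$ atypical balls (the paper's Lemmas~\ref{lemma: white components small} and~\ref{lemma: black components small} prove precisely this for its rough tiles). Consequently the proof of part~(i) cannot proceed by ``iterating deterministically on $\mathcal{E}$'': the advancing front \emph{will} encounter atypical regions, and a single chain of such regions could in principle wall off a macroscopic unreached territory. Your claim that iterating $O(\sqrt{n}/r)$ times wraps the infection around the torus, and your claim that the $O(\sqrt{\log n})$ diameter bound on uninfected clusters is ``automatic from the scale on which $\mathcal{E}$ is defined'', both presuppose that the atypical set is a small spatial exception, which is what has to be \emph{proved}, not assumed, once $\mathcal{E}$ fails. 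The paper fills this gap by colouring tiles (red/white/blue), showing the white (atypical) tiles form only $O(1)$-size components in a bounded power of the auxiliary grid graph $H$, and then invoking the Bollob\'as--Leader edge-isoperimetric inequality on the toroidal grid: any non-red region cut off from the growing red component must have a boundary dual-cycle contained entirely in a white component, hence be small. That isoperimetric/topological step --- how local rarity of atypical tiles translates into a global uniqueness statement for the giant red component --- is the missing ingredient in your sketch. Your Peierls argument for part~(ii) is closer in spirit to the right mechanism, and the entropy-versus-exponent bookkeeping you flag does work out (since $r=\Theta(\sqrt{\log n})$, each ring-crossing costs $\exp(-\Theta(\sqrt{\log n}))$ while there are only $\exp(O(\ell/\sqrt{\log n}))$ contours of diameter $\ell$), but the contours must be built from the atypical tiles themselves, not from a good event that you have assumed globally; the paper's version (Lemma~\ref{lemma: square non-black implies center blue} plus Lemma~\ref{lemma: black components small}) is cleaner than your coupling-with-background-process route and does not require that detour.
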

\noindent Note that in our regime $\pi r^2=a\log n$ and  the longest edge of $G_{n,r}^2$ thus has length $O(\sqrt{\log n})$. We therefore view point-sets of diameter $O(\sqrt{\log n})$ as \emph{local} configurations. What Theorem~\ref{theorem: (1+p)/2 threshold} says is thus that for $\theta<\frac{1+p}{2}$, a sufficiently large local infection will, with the help of the initially infected vertices, spread to most of the graph, leaving only isolated local `islands' of uninfected vertices, while for $\theta>\frac{1+p}{2}$, all infectious local outbreaks remain local.

This leads us to conjecture that in the Bradonji\'c--Saniee model on $G_{n,r}^2$ with $\theta\neq \frac{1+p}{2}$, w.h.p.\ either an initial infection spreads to at most $o(n)$ new vertices (almost no percolation), or it spreads to all but at most $o(n)$ vertices (almost percolation).
\begin{conjecture}[Almost no percolation/almost full percolation dichotomy]\label{conjecture: either almost no percolation or almost percolation}
	Let $(a, p, \theta)$ be fixed with $\theta\neq \frac{1+p}{2}$ and $a>1$. Then in the Bradonji\'c--Saniee model for bootstrap percolation on $G_{n,r}^2$, w.h.p.\ either $\vert A_{\infty}\setminus A_0\vert =o(n)$ or $\vert \mathcal{P}\setminus A_{\infty}\vert =o(n)$.
\end{conjecture}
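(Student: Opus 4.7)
The plan is to treat the two regimes $\theta\gtrless\frac{1+p}{2}$ separately, using Theorem~\ref{theorem: (1+p)/2 threshold} as the main input in each.

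For $\theta>\frac{1+p}{2}$, the conjecture follows almost immediately from Theorem~\ref{theorem: (1+p)/2 threshold}(ii) combined with monotonicity of the bootstrap process in the seed set. Fix any constant $C>0$ and any ball $B\subseteq T_n^2$ of radius $Cr$. Theorem~\ref{theorem: (1+p)/2 threshold}(ii) guarantees that w.h.p.\ the bootstrap process started from $A_0\cup(B\cap\mathcal P)$ infects at most $o(n)$ additional vertices beyond $A_0\cup B$. Monotonicity gives $A_\infty(A_0)\setminus A_0\subseteq A_\infty(A_0\cup(B\cap\mathcal P))\setminus A_0$, which w.h.p.\ has cardinality at most $|B\cap\mathcal P|+o(n)=O(\log n)+o(n)=o(n)$. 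This yields $|A_\infty\setminus A_0|=o(n)$ w.h.p., the almost-no-percolation side of the dichotomy.

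For $\theta<\frac{1+p}{2}$, let $C=C(a,p,\theta)$ be the constant from Theorem~\ref{theorem: (1+p)/2 threshold}(i) and define the monotone event
\[E_n:=\bigl\{\exists\ \text{ball}\ B\subseteq T_n^2\ \text{of radius}\ Cr\ \text{with}\ B\cap\mathcal P\subseteq A_\infty\bigr\}.\]
Conditional on $E_n$, Theorem~\ref{theorem: (1+p)/2 threshold}(i) delivers $|\mathcal P\setminus A_\infty|=o(n)$ w.h.p., so it remains to prove that conditional on $\neg E_n$ one has $|A_\infty\setminus A_0|=o(n)$ w.h.p. My proposed route is coarse-graining: tile $T_n^2$ by cells of side $Lr$ for a suitably large constant $L=L(C)$, and establish local control on $\neg E_n$. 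Specifically, one would argue that if too many vertices of some cell lay in $A_\infty\setminus A_0$, then a local replay of the greedy growth argument underpinning the proof of Theorem~\ref{theorem: (1+p)/2 threshold}(i) would produce a fully infected ball of radius $Cr$ inside that cell, contradicting $\neg E_n$. Combined with a diameter bound of $O(\sqrt{\log n})$ on new-infection clusters (in analogy with the component statement of Theorem~\ref{theorem: (1+p)/2 threshold}(i)) and Poisson concentration across cells, this should deliver the desired $o(n)$ bound.

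The main obstacle lies in making this sub-critical cell-level control sufficiently strong. A cap of the form ``new infections occupy at most a $(1-\delta)$-fraction of each cell'' is \emph{insufficient}, since that would only yield $(1-\delta)n$ new infections; one needs either a vanishing local proportion of new infections per cell, or a proof that only an $o(1)$-fraction of cells are touched by new infections at all. This appears to require a stability analysis of Theorem~\ref{theorem: (1+p)/2 threshold}(i) considerably more delicate than its existential form. A complementary tool would be a Friedgut--Kalai-style sharp-threshold argument for $E_n$, exploiting the translational symmetry of $T_n^2$ and the monotonicity of $E_n$ in $A_0$, to confine the intermediate regime $\Pr[E_n]\in(\eps,1-\eps)$ to a window of width $o(1)$ in $p$, thereby reducing the problem to a careful analysis of that transition window.
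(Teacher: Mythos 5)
This statement is not a theorem in the paper at all: it is Conjecture~\ref{conjecture: either almost no percolation or almost percolation}, which the authors explicitly state they were \emph{unable to resolve in full}. The paper only establishes the conjecture in two partial regimes --- when $\theta>\frac{1+p}{2}$ (via Theorem~\ref{theorem: (1+p)/2 threshold}(ii)) and when $\theta<\theta_{\mathrm{local}}$ (via Theorem~\ref{theorem: threshold for symmetric local growth}) --- leaving the intermediate range $\theta_{\mathrm{local}}<\theta<\frac{1+p}{2}$ open; that gap is exactly the content of Conjecture~\ref{conjecture: symmetric growth}.

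Your treatment of $\theta>\frac{1+p}{2}$ is sound and is essentially the argument the paper has in mind; the monotonicity step is correct, and in fact you do not even need to invoke a ball $B$ --- Theorem~\ref{theorem: (1+p)/2 threshold}(ii) with the empty ball (or $C$ arbitrary) already gives $\vert A_\infty\setminus A_0\vert=o(n)$ directly.

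For $\theta<\frac{1+p}{2}$, you have correctly diagnosed your own gap, and the gap is real: conditioning on $\neg E_n$ and trying to run a local ``contrapositive'' of Theorem~\ref{theorem: (1+p)/2 threshold}(i) does not produce a vanishing per-cell density of new infections. Theorem~\ref{theorem: (1+p)/2 threshold}(i) only becomes active once a ball of radius $Cr$ is \emph{already} fully infected; its proof gives no upper bound on $\vert A_\infty\setminus A_0\vert$ when no such ball arises, and there is a priori nothing preventing a configuration in which a constant-proportion of cells each acquire a constant-proportion of new infections without any cell getting fully infected. Establishing that this cannot happen is precisely the missing ingredient. The paper's authors suggest (in the concluding remarks) that one would need substantially finer tile colourings --- labelling each fine tile $T$ by the pair of approximate densities $(i_T/Q,j_T/Q)$ of eventually infected and never-infected points, and then showing that w.h.p.\ almost every tile's label is close to either $(1,0)$ or $(p,1-p)$. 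This is a genuine stability/dichotomy argument at the tile level, and is stronger than anything extractable from the existing proofs. Your suggested Friedgut--Kalai sharp-threshold route is a plausible alternative heuristic but also unproven: monotonicity in $A_0$ and translational symmetry give sharpness in $p$ of the event $E_n$, not directly the almost-percolation dichotomy for the random process at a fixed $p$.

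In short: your proposal correctly reproves the known half of the conjecture and correctly localises the open half, but it does not (and cannot, with the tools in this paper) close the gap, since the gap is an open problem.
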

\noindent We were unfortunately unable to resolve Conjecture~\ref{conjecture: either almost no percolation or almost percolation} in full, but our results imply it holds if $\theta>\frac{1+p}{2}$ or $\theta<\theta_{\mathrm{local}} $, where $\theta_{\mathrm{local}} =\theta_{\mathrm{local}}(a,p)$ is a quantity arising as the solution to an explicit continuous optimisation problem and whose technical definition (Definition~\ref{def: theta sym local growth}) we defer to Section~\ref{subsection: local growth}. Suffice it to say here that $\theta_{\mathrm{local}}$ is the threshold for the appearance of large local, symmetrically distributed, infectious outbreaks.
\begin{theorem}\label{theorem: threshold for symmetric local growth}
	Let $(a,p, \theta)$ be fixed with $a>1$ and
	\begin{align}\label{eq: symmetric growth threshold}
	\theta< \theta_{\mathrm{local}}(a,p).
	\end{align}
	Then w.h.p.\ almost percolation occurs in the Bradonji\'c--Saniee model, i.e.\ $\vert \mathcal{P}\setminus A_\infty\vert =o(n)$. 	
\end{theorem}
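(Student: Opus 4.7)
The strategy is to reduce the theorem to Theorem~\ref{theorem: (1+p)/2 threshold}(i). That theorem guarantees that the existence of a single fully-infected ball of radius $Cr$ in $T_n^2$ suffices to trigger almost-percolation, so it is enough to show that for $\theta<\theta_{\mathrm{local}}(a,p)$, w.h.p.\ the bootstrap dynamics starting from the random set $A_0$ alone produce at least one such infected ball somewhere in $T_n^2$. The quantity $\theta_{\mathrm{local}}(a,p)$ is, by its description (Definition~\ref{def: theta sym local growth}), exactly the largest threshold for which a \emph{local, symmetrically distributed} infectious outbreak of this type is viable, so the task is essentially to turn the variational statement defining $\theta_{\mathrm{local}}$ into a probabilistic lower bound on the event of such an outbreak occurring in a given small region of the torus.

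To this end, I would tessellate $T_n^2$ into $N=\Theta\bigl(n/(\log n)^{\kappa}\bigr)$ pairwise disjoint cells, each a box of side $\Theta(\sqrt{\log n})$ containing a central target ball $B^\star$ of radius $Cr$ together with a buffer of width much larger than $r$. In each cell $i$ I would define a \emph{symmetric local growth event} $E_i$, depending only on $\cP$ and $A_0$ inside the cell, such that occurrence of $E_i$ forces the bootstrap dynamics restricted to the cell to infect all of $B^\star$. A natural way to design $E_i$ is via a nested family of concentric annuli $A_1\subset A_2 \subset \cdots \subset A_K=B^\star$, with radii increasing from some $O(r)$ seed radius up to $Cr$: one requires that the central seed contains enough vertices of $A_0$ to become entirely infected, and that for each subsequent annulus $A_j$ the empirical density of $A_0$-points is at least some target $\rho_j$ uniformly over all sub-patches of radial and angular extent comparable to $r$. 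The targets $\rho_j$ are chosen so that, once $A_{j-1}$ is fully infected, every vertex of $A_j$ has at least $\theta a\log n$ infected neighbours inside $A_{j-1}\cup A_j$ and is therefore swallowed in the next round of the dynamics.

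The content of the assumption $\theta<\theta_{\mathrm{local}}(a,p)$ is that this system of annular density constraints is feasible with densities $\rho_j$ small enough that
\[
\Prb(E_i) \geq n^{-\beta}
\]
for some $\beta=\beta(a,p,\theta)<1$; indeed, the variational problem in Definition~\ref{def: theta sym local growth} should be precisely the optimisation of the exponent $\beta$ over all admissible radial density profiles. Because the cells are disjoint and separated by buffers exceeding the interaction range $r$, the events $E_i$ are mutually independent, so the expected number of successful cells is at least $n^{1-\beta}/(\log n)^{\kappa}\to\infty$, and a routine Chernoff estimate gives $\Prb(\bigcup_i E_i)=1-o(1)$. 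Monotonicity of the bootstrap dynamics ensures that success of the restricted in-cell process implies a fully-infected ball of radius $Cr$ in the global process, and Theorem~\ref{theorem: (1+p)/2 threshold}(i) then finishes the argument.

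The main obstacle, in my view, is the calibration step: converting the abstract optimum $\theta_{\mathrm{local}}(a,p)$ into concrete choices of the annular radii and target densities $\rho_j$ whose associated event actually has probability at least $n^{-\beta}$ with $\beta<1$. This requires delicately balancing the large-deviation cost of demanding locally elevated densities $\rho_j>p$ against the geometric cost of enforcing those densities \emph{uniformly} on all sub-patches of each annulus, and checking that the threshold condition on each \emph{new} vertex (rather than just on average) is met once its inward neighbourhood is infected. A secondary, largely bookkeeping, difficulty is the handling of the buffers and the verification that the restricted in-cell dynamics faithfully lower-bound the global process; this is standard once the growth event is set up monotonically.
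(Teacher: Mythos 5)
Your proposal matches the paper's proof in all essentials: the paper likewise defines an $O(r)$-bounded ``symmetric local growth'' event from the optimising radial density profiles $(f,g)$ in Definition~\ref{def: theta sym local growth} (via a fine tiling of a ball of radius $(C+2)r$, with each tile required to contain at least $\int pf$ points of $A_0$ and $\int(1-p)g$ initially uninfected points), shows via Lemma~\ref{lemma: Paul} that this event has probability $n^{-c+o(1)}$ with $c<1$ precisely because $Q_{\max}>-1/a$, shows it forces radial outward spread of the infection to a ball of radius $Cr$ exactly as in your annulus-by-annulus scheme (with $I(f,g)(t)>\theta$ supplying the neighbour count for each boundary vertex), invokes the well-separated translates argument (Lemma~\ref{lemma: first and second moment}(ii)) in place of your explicit tessellation, and finishes with Theorem~\ref{theorem: (1+p)/2 threshold}(i). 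The calibration step you flag as the main obstacle is carried out in the paper's Claims~\ref{claim: prob of local start event} and~\ref{claim: local growth infection spreads} using uniform continuity of $\log f,\log g$ and Proposition~\ref{prop: bound on curve-meeting tiles} to control tile-boundary errors.
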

\noindent We note here the fact that the `symmetric local growth condition' $\theta < \theta_{\mathrm{local}}(a,p)$ implies the `global growth condition' $\theta <(1+p)/2$ (see Proposition~\ref{proposition: inequality for theta local}): $\theta_{\mathrm{local}}\leq (1+p)/2$ for all fixed $a>1$ and $p\in [0,1]$. 
We believe that $\theta_{\mathrm{local}}$ gives the threshold for almost percolation in the  Bradonji\'c--Saniee model for bootstrap percolation, and thus that  the following strengthening of Conjecture~\ref{conjecture: either almost no percolation or almost percolation} is true.
\begin{conjecture}\label{conjecture: symmetric growth}[Symmetric local growth]
Let $(a,p, \theta)$ be fixed with $a>1$ and
	\begin{align*}
	\theta> \theta_{\mathrm{local}}(a,p).
	\end{align*}
	Then w.h.p.\ almost no percolation occurs in the Bradonji\'c--Saniee model, i.e.\ $\vert \mathcal{A}_{\infty}\setminus A_0\vert =o(n)$.
\end{conjecture}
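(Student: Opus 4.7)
The plan is to prove the matching upper bound for $\theta_{\mathrm{local}}$ via a coarse-graining argument combined with a large-deviation analysis of possible local ``growth seeds''. Since $\theta_{\mathrm{local}}(a,p)$ is defined as the solution to an explicit continuous optimisation problem, the strategy is to show that, for $\theta$ strictly above this threshold, the exponential rate at which viable growth configurations appear at any given location in $T_n^2$ is strictly smaller than $n^{-1}$, so that a union bound over locations kills everything.

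More concretely, I would tile $T_n^2$ by disjoint square cells $Q_i$ of side length $Cr$ for a large constant $C = C(a,p,\theta)$, giving $O(n/\log n)$ cells. First reduce the conjecture to a single-cell statement: if $|A_\infty \setminus A_0| \neq o(n)$ with non-negligible probability, then at least one cell $Q_i$ must be \emph{critical}, in the sense that its restricted infection $(A_0 \cap Q_i^*)$ in a constant-radius neighbourhood $Q_i^*$ triggers bootstrap growth to a region of diameter at least $\Omega(r\sqrt{\log \log n})$, say. Next, bound the probability that a given cell is critical by $n^{-F(a,p,\theta) + o(1)}$, where $F(a,p,\theta)$ is the value of the optimisation problem in the definition of $\theta_{\mathrm{local}}$. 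To do so, condition on $\cP \cap Q_i^*$ and enumerate possible growth histories by the sequence of (discretised) infected regions $R_0 \subseteq R_1 \subseteq \dots$; for each such candidate history one obtains by Chernoff/Poisson large-deviation bounds a probability of at most $n^{-I(R_0, R_1, \dots)+o(1)}$, where $I$ is the rate function associated to needing a density at least $\theta$ of infected neighbours in the outer layer of each $R_t$. Take a union bound over the (polynomially many, after discretisation) possible histories and over the $O(n/\log n)$ cells.

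The main obstacle is precisely the step from \emph{symmetric} growth (which is what $\theta_{\mathrm{local}}$ captures) to \emph{general} growth histories. It is entirely conceivable that for some $(a,p,\theta)$ with $\theta > \theta_{\mathrm{local}}$, an asymmetric, elongated or irregularly-shaped growth front requires a strictly lower initial infection density, and hence appears with strictly higher probability, than the optimal symmetric seed. To close this gap one would need a Wulff-type variational inequality: among all sequences of growing regions $(R_t)$ with $\mathrm{diam}(R_T)\geq R$, the large-deviation cost $I(R_0,\dots,R_T)$ is minimised (to leading order in $\log n$) by symmetric, disc-shaped sequences. Establishing such an isoperimetric statement for this particular rate function appears to be the crux of the difficulty, and is presumably why the authors left the statement as a conjecture rather than a theorem.

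A secondary technical concern is the interaction between neighbouring cells: a growth seed could straddle the boundary between $Q_i$ and $Q_{i+1}$, or the infection in one cell could be fed by ``help'' from initially infected vertices lying just outside it. This is handled by enlarging cells to the fattened sets $Q_i^*$, showing that once an infection has diameter comparable to $Cr$ the arguments of Theorem~\ref{theorem: (1+p)/2 threshold}(ii) (already proved in the excerpt) prevent further spread when $\theta > (1+p)/2 \geq \theta_{\mathrm{local}}$ (using Proposition~\ref{proposition: inequality for theta local}), and combining this with a standard Peierls-style blocking argument. This part of the argument should be routine once the single-cell large-deviation estimate is established; the Wulff-type inequality is the real bottleneck.
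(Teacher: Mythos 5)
The statement in question is a conjecture which the authors leave open; the paper contains no proof to compare against. Your proposal is not a proof either, and you are candid about this: the missing ingredient you identify --- a Wulff-type isoperimetric inequality showing that, among all discretised growth histories $(R_0 \subseteq R_1 \subseteq \cdots)$ reaching linear scale, the Poisson large-deviation cost is minimised by radially symmetric, disc-shaped sequences --- is precisely the gap that would need to be closed to upgrade $\theta_{\mathrm{local}}$ from a threshold for \emph{symmetric} local outbreaks to a threshold for \emph{all} local outbreaks. Your coarse-graining framework (cells of scale $r$, a per-cell large-deviation estimate of order $n^{-F(a,p,\theta)+o(1)}$, and a union bound over $O(n/\log n)$ cells) is the natural architecture for such a proof, and it is consistent with the authors' own remarks in Section~\ref{section: concluding remarks}, where they suggest that progress on Conjecture~\ref{conjecture: either almost no percolation or almost percolation} (and hence on Conjecture~\ref{conjecture: symmetric growth}) will likely require tile colourings that track local infected/uninfected densities rather than the binary red/blue colouring used to prove Theorem~\ref{theorem: giant spreads}.

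One clarification about your ``secondary technical concern'': in the regime $\theta_{\mathrm{local}} < \theta < \frac{1+p}{2}$, which is exactly the open part of the conjecture (the case $\theta > \frac{1+p}{2}$ already follows from Theorem~\ref{theorem: (1+p)/2 threshold}(ii)), Theorem~\ref{theorem: (1+p)/2 threshold}(i) says that once a local outbreak reaches a ball of radius $Cr$ it w.h.p.\ goes on to almost percolate. So in this regime there is no Peierls-style blocking or containment step available; the argument must rule out the formation of any such large local outbreak in the first place, and the entire burden falls on the single-cell estimate. The Wulff-type variational inequality you flag is therefore not a technical simplification but the whole obstruction, and your diagnosis of why this remains a conjecture is accurate.
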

The content of Conjecture~\ref{conjecture: symmetric growth} is two-fold: the conjecture asserts first of all that completely infecting a large local ball is w.h.p.\ necessary for the infection to spread globally, and secondly that the likeliest way an infection spreads to a large local ball is if there is an abnormally high concentration of initially infected and of initially non-infected vertices distributed in a symmetric manner around the centre of the said ball.

Theorems~\ref{theorem: (1+p)/2 threshold} and~\ref{theorem: threshold for symmetric local growth} above are stated and proved for the Bradonji\'c--Saniee bootstrap percolation model in the torus $T_n^2$ rather than the square $S_n^2:=[0,\sqrt{n}]^2$ to avoid technical complications due to boundary effects. However, as we note in Section~\ref{section: concluding remarks}, our results also hold for their model in the square, modulo a technical modification in the statement of Theorem~\ref{theorem: (1+p)/2 threshold}(i). We thus expect Conjectures~\ref{conjecture: either almost no percolation or almost percolation} and~\ref{conjecture: symmetric growth} to also hold in the square.

Given Theorem~\ref{theorem: threshold for symmetric local growth} and Conjecture~\ref{conjecture: symmetric growth} on almost percolation, it is natural to ask how much smaller $\theta$ needs to be to ensure full percolation: which triples $(a,p, \theta)$ guarantee that a global infection w.h.p.\ infects every vertex of $G_{n,r}^2$?  We are unable to answer this question exactly. However, as in Theorem~\ref{theorem: threshold for symmetric local growth} we are able to determine the threshold $\theta_{\mathrm{islands}}$ for the disappearance of certain symmetric `islands' of uninfected vertices, which provide what we conjecture is the main obstacle to full percolation. Here $\theta_{\mathrm{islands}}=\theta_{\mathrm{islands}}(a,p)$ is an (explicit) solution to a certain optimisation problem, whose formal definition we defer to Section~\ref{subsection: islands}. (Note that $\theta_{\mathrm{islands}}$ will satisfy the inequality $\theta_{\mathrm{islands}}\leq \frac{1+p}{2}$, see~\eqref{inequality for theta island}.)
\begin{theorem}\label{theorem: islands}
Let $(a,p, \theta)$ be fixed with $a>1$ and $\theta>\theta_{\mathrm{islands}}(a,p)$. Then w.h.p.\ some vertices remain uninfected by the end of the bootstrap percolation process in the Bradonji\'c--Saniee model in the torus $T_n^2$, i.e.\ $\vert \mathcal{P}\setminus A_{\infty}\vert>0$ and we do not have full percolation.
\end{theorem}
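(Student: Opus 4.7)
The plan is to exhibit w.h.p.\ a \emph{frozen island}: a ball $B = B(x_0, \rho r) \subset T_n^2$ such that no vertex of $(\cP \cap B) \setminus A_0$ is ever infected, no matter what happens outside $B$. Following the definition to be given in Section~\ref{subsection: islands}, $\theta_{\mathrm{islands}}(a, p)$ is the infimal $\theta$ for which a certain variational problem admits a solution: a radius $\rho > 0$ together with a rotationally symmetric pair of profiles, namely a Poisson intensity $\lambda:[0,\rho]\to \R_{\geq 0}$ and an initial infection probability $\pi:[0,\rho]\to[0,1]$, such that (a) for every $u \in B(0,\rho r)$ the expected size of the ``dangerous neighbourhood'' of $u$ --- namely the points of $B(u,r)\setminus B(0,\rho r)$ together with the initially infected points of $B(u,r)\cap B(0,\rho r)$ --- is strictly less than $\theta a\log n$, and (b) the Sanov-type rate $I$ of $(\lambda,\pi)$ with respect to the reference distribution (intensity $1$, infection probability $p$) satisfies $I < 1$. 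For $\theta > \theta_{\mathrm{islands}}(a,p)$ I therefore fix such a triple $(\lambda,\pi,\rho)$ of rate $I < 1$ together with a small $\eps > 0$ for which any $\eps$-perturbation of $(\lambda,\pi)$ still satisfies the dangerous-neighbourhood inequality.

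The first ingredient is a deterministic \emph{frozen-ball lemma}. Given $B = B(x_0, \rho r)$, for each $v \in \cP \cap B$ set $N(v) := (A_0 \cap B(v,r) \cap B) \cup ((\cP \cap B(v,r)) \setminus B)$. If $\vert N(v) \vert < \theta a \log n$ for every $v \in \cP \cap B$, then no vertex of $(\cP \cap B) \setminus A_0$ is ever infected. The proof is an induction on the time step $t$: assuming $A_{t-1} \cap B \subseteq A_0 \cap B$, the infected neighbours at time $t$ of any $v \in (\cP \cap B) \setminus A_0$ are contained in $N(v)$, and are too few to push $v$ past threshold. This reduces the goal to producing a single ball $B \subset T_n^2$ whose empirical density and empirical infection fraction match $(\lambda,\pi)$ up to $\eps$.

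The second ingredient is a probabilistic existence argument. I would tile $T_n^2$ with $\Theta(n/\log n)$ pairwise disjoint windows, each of diameter $O(r)$ and large enough to contain a candidate ball $B(x_0, \rho r)$ together with an $r$-thick exterior annulus, so that the event ``the window's Poisson process and initial-infection data match $(\lambda,\pi)$ up to $\eps$'' is determined by data inside the window and is therefore independent across windows. A Sanov-type estimate --- obtained by partitioning $B(x_0, \rho r)$ into $O(1)$ thin annuli on which $(\lambda,\pi)$ is essentially constant, and combining standard Poisson and Binomial large deviation bounds annulus by annulus --- yields that each window matches with probability at least $n^{-I - o(1)}$. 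Since $I < 1$, the expected number of matching windows is $n^{1 - I - o(1)}/\log n \to \infty$, and independence across windows gives concentration: w.h.p.\ some window hosts a frozen island. As $\pi < 1$ on a set of positive measure, by Poisson concentration that island contains at least one vertex outside $A_0$ w.h.p., so $\cP \setminus A_\infty \neq \emptyset$. The main technical effort lies in the Sanov-type matching lower bound, which requires joint large deviation control on the empirical density and infection fraction across the $\Theta(\log n)$ expected points of a window, together with extracting sufficient regularity from the optimiser of the variational problem defining $\theta_{\mathrm{islands}}$ to justify the $\eps$-continuity step.
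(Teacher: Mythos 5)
Your proposal follows essentially the same route as the paper's proof, which is itself a direct adaptation of the proof of Theorem~\ref{theorem: threshold for symmetric local growth}: fix a near-optimal radially symmetric island profile with large-deviation rate strictly below $1$, discretise it into an $O(r)$-bounded event via a fine tiling (your thin annuli play the role of the paper's square tiles and Lemma~\ref{lemma: Paul}), prove a deterministic freezing lemma by induction on the time step, and conclude via independence over well-separated translates exactly as in Lemma~\ref{lemma: first and second moment}(ii). The only point to flag is your paraphrase of the variational problem: you define the profiles $(\lambda,\pi)$ only on $[0,\rho]$, i.e.\ inside the island, whereas Definition~\ref{I and Q definitions}/the symmetric $T$-island distribution allows the \emph{total} point density in the exterior lune regions $R_{\mathrm{outer}}(t)$ to be depressed below the reference intensity as well --- and the optimiser genuinely exploits such a sparse surrounding ``lake''. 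As literally stated your argument therefore establishes non-percolation only above a possibly larger threshold; the fix is immediate (extend the profiles and the matching event to the $r$-thick exterior annulus you already include in each window), after which your proof coincides with the paper's.
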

\begin{conjecture}\label{conjecture: islands}
Let $(a,p, \theta)$ be fixed with $a>1$ and $\theta<\min\left(\theta_{\mathrm{islands}}, \theta_{\mathrm{local}}\right)$.  Then w.h.p.\  we have full percolation in the Bradonji\'c--Saniee model in the torus $T_n^2$, i.e.\ $\mathcal{P}= A_{\infty}$.
\end{conjecture}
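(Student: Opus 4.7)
The strategy is to combine Theorems~\ref{theorem: threshold for symmetric local growth} and~\ref{theorem: (1+p)/2 threshold}(i) to reduce full percolation to the non-existence of small local uninfected ``islands'', and then to carry out a union bound in which each local term is controlled by the optimisation problem defining $\theta_{\mathrm{islands}}$.

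First I would use the hypothesis $\theta < \theta_{\mathrm{local}}(a,p) \leq (1+p)/2$ (the second inequality coming from Proposition~\ref{proposition: inequality for theta local}). Theorem~\ref{theorem: threshold for symmetric local growth} then gives $\vert\mathcal{P}\setminus A_\infty\vert = o(n)$ w.h.p. In particular some ball of radius $Cr$ in $T_n^2$ becomes fully infected w.h.p., triggering the hypothesis of Theorem~\ref{theorem: (1+p)/2 threshold}(i); the latter guarantees that every connected component of $G_{n,r}^2[\mathcal{P}\setminus A_\infty]$ has Euclidean diameter at most $K\sqrt{\log n}$ for some constant $K = K(a,p,\theta)$. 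Full percolation therefore reduces to proving that w.h.p.\ no such non-empty component exists.

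Next, cover $T_n^2$ by $M = O(n/\log n)$ balls of radius $3K\sqrt{\log n}$ with bounded multiplicity; any potential uninfected component lies in one such ball $B$. By a union bound it suffices to show that, for each fixed $B$, the probability that $B$ contains an uninfected component is $o(\log n / n)$. Given almost percolation, almost all Poisson points outside $B$ eventually become infected, so the island event factorises (modulo negligible error terms coming from the $o(n)$ exceptional uninfected vertices outside $B$) into: (a) a factor $\exp(-p\vert R\vert)$ from the absence of any initially infected Poisson point in the region $R \subseteq B$ occupied by the island, and (b) a product of Poisson large-deviation factors encoding the conditions $\vert\mathcal{P}\cap(B(v,r)\setminus R)\vert < \theta a\log n$ for every Poisson point $v \in R$ (without which $v$ would eventually be infected by its neighbours outside $R$). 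When $R$ is restricted to symmetric disc shapes $B(x_0,\rho)$, this reduces exactly to the optimisation problem used to define $\theta_{\mathrm{islands}}$: for $\theta < \theta_{\mathrm{islands}}$ the infimum of the resulting exponent exceeds $\log n$ by a positive constant factor, yielding a per-ball probability of $n^{-1-\Omega(1)}$ and closing the union bound for symmetric candidate islands.

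The step I expect to be the main obstacle is extending the above from symmetric discs to arbitrary measurable island shapes $R$ of diameter $O(\sqrt{\log n})$. One must establish an isoperimetric-type statement: among all feasible $R$ (satisfying (b) at every interior Poisson point), the cost exponent $p\vert R\vert + \sum_{v \in R}\mathrm{LD}(v)$ is minimised by a symmetric disc. A natural attempt is a Schwarz-type symmetrisation: replace $R$ by a disc $R^\star$ of equal area, so that $p\vert R\vert$ is preserved while central overlaps $\vert B(v,r)\cap R^\star\vert$ are heuristically maximised, making the large-deviation condition easier. The subtlety is that feasibility must be verified simultaneously at \emph{every} point $v \in R$, including boundary points where the overlap $\vert B(v,r)\cap R\vert$ is small; naive rearrangement can destroy feasibility at such points, and in general the feasibility region is not preserved by standard symmetrisations. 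A successful argument would thus likely require either a carefully tailored variational or discretisation approach, or a direct combinatorial enumeration of asymmetric island shapes at suitable coarseness, and it is precisely this delicate asymmetric analysis that the authors have not been able to complete.
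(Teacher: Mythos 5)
This statement is explicitly an open conjecture in the paper, so there is no proof to compare against; the authors prove only the converse direction (Theorem~\ref{theorem: islands}, islands exist for $\theta>\theta_{\mathrm{islands}}$) and give a non-matching rigorous lower bound on the full-percolation threshold in Appendix~B. Your outline correctly identifies the two rigorous reductions available: $\theta<\theta_{\mathrm{local}}\le(1+p)/2$ gives almost percolation via Theorem~\ref{theorem: threshold for symmetric local growth}, and Theorem~\ref{theorem: (1+p)/2 threshold}(i) then confines any surviving uninfected component to Euclidean diameter $O(\sqrt{\log n})$, so the problem does reduce to a first-moment bound over $O(\sqrt{\log n})$-bounded local events. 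You also correctly and candidly locate the central difficulty --- proving that the extremal island shape is the radially symmetric one, which is exactly what separates Conjecture~\ref{conjecture: islands} from Theorem~\ref{theorem: islands}.

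However, there is a more basic miscalibration in your factorisation (a)$+$(b) that would prevent your approach from even matching $\theta_{\mathrm{islands}}$ in the symmetric case. Item~(a) imposes the absence of \emph{any} initially infected point in the region $R$, i.e.\ the density $f\equiv 0$ there. This is neither necessary nor optimal. First, an island is by definition a connected component of never-infected vertices in $G_{n,r}^2[\mathcal{P}\setminus A_\infty]$; initially infected points can perfectly well lie inside the region the island occupies, they simply belong to $A_\infty$ and are not part of the component. Second, and more importantly, the symmetric island distribution defining $\theta_{\mathrm{islands}}$ (Definition~3.9 and the Euler--Lagrange solution in Appendix~A) has $f(t)=e^{-\lambda}$ for small $t$: a depressed but strictly positive density of initially infected points. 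Since the Poisson large-deviation cost $f-1-f\log f$ is zero at $f=1$ and decreases to $-1$ at $f=0$, forcing $f=0$ is the \emph{most} expensive choice, not the least. Your event is therefore a strict sub-event of the true island event, so $\exp(-p|R|)$ times the large-deviation terms from~(b) is a \emph{lower} bound on the per-ball island probability, which is unusable in a union bound; equivalently, optimising your cost over symmetric discs would produce a threshold strictly below $\theta_{\mathrm{islands}}$. The correct cost functional is the one in~\eqref{eq: weight def}, with $f$ allowed to range over $[0,1]$ and constrained by~\eqref{eq: island integral}, not pinned to $0$. For comparison, the paper's partial rigorous argument (Appendix~B) sidesteps enumeration of island shapes altogether: it takes the two extremal vertices $\mathbf{u},\mathbf{v}$ of a candidate uninfected component, notes they lie in a lune of diameter $O(\sqrt{\log n})$, and bounds the probability of the joint non-infection event at $\mathbf{u}$ and $\mathbf{v}$ directly via Lagrangian optimisation; this gives a rigorous but non-matching lower bound on the percolation threshold and deliberately does not attempt the symmetrisation you describe.
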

\noindent Theorem~\ref{theorem: islands} carries over immediately to the square setting, but Conjecture~\ref{conjecture: islands} does not: in the square setting, one will need to separately compute the threshold for the disappearance of uninfected islands close to the boundary, which will require additional calculations (this is a rather standard feature in results on random geometric graphs in the square; see e.g.\ the proof of~\cite[Theorem 7]{BalisterBollobasSarkarWalters}); not such liminal islands will have fewer neighbouring vertices and will be harder to infect.

For the Bradonji\'c--Saniee bootstrap percolation model in general dimension $d\geq 1$, we also determine the threshold $\theta_{\mathrm{start}}(a,p)$ for the event $A_0\neq A_{\infty}$ to hold w.h.p.\ (Proposition~\ref{prop: start}), generalising~\cite[Theorem 1]{BradonjicSaniee14}, and for the event that there exist `uninfectable' initially uninfected vertices with degree strictly less than $\theta a \log n$, giving lower bounds on the threshold for full percolation $f_2(a, \theta)$ given in~\cite[Theorem 2]{BradonjicSaniee14}.

Together with the results and conjectures above, these last results give the following picture for the expected behaviour of the Bradonji\'c--Saniee model in dimension $2$ with $(a,p, \theta)$ fixed and $a>1$ (see Figure 1):
\begin{itemize}
	\item for $\theta>\theta_{\mathrm{start}}$ w.h.p.\ we have no percolation: $A_0=A_{\infty}$;
	\item for $\theta_{\mathrm{start}}>\theta > \frac{1+p}{2}$, w.h.p.\ we have almost no percolation, $\vert A_{\infty}\setminus A_0\vert =o(n)$, even if we adversarially infect a ball of area $O(\log n)$;
	\item for $ \frac{1+p}{2}> \theta >\theta_{\mathrm{local}}$, w.h.p.\ we have almost no percolation, but adversarially infecting a ball of area $\Omega(\log n)$ w.h.p.\ leads to almost percolation;
	\item for $\theta_{\mathrm{local}}> \theta > \theta_{\mathrm{islands}}$, w.h.p.\ we have almost percolation but not full percolation, $0<\vert \mathcal{P}\setminus A_{\infty}\vert \leq o(n)$;
	\item for $\min \left(\theta_{\mathrm{local}}, \theta_{\mathrm{islands}}\right)>\theta $, w.h.p.\ we have full percolation, $ \mathcal{P}= A_{\infty}$.
\end{itemize}


\begin{figure}[htp!]
\centering
\includegraphics[width=0.8\linewidth]{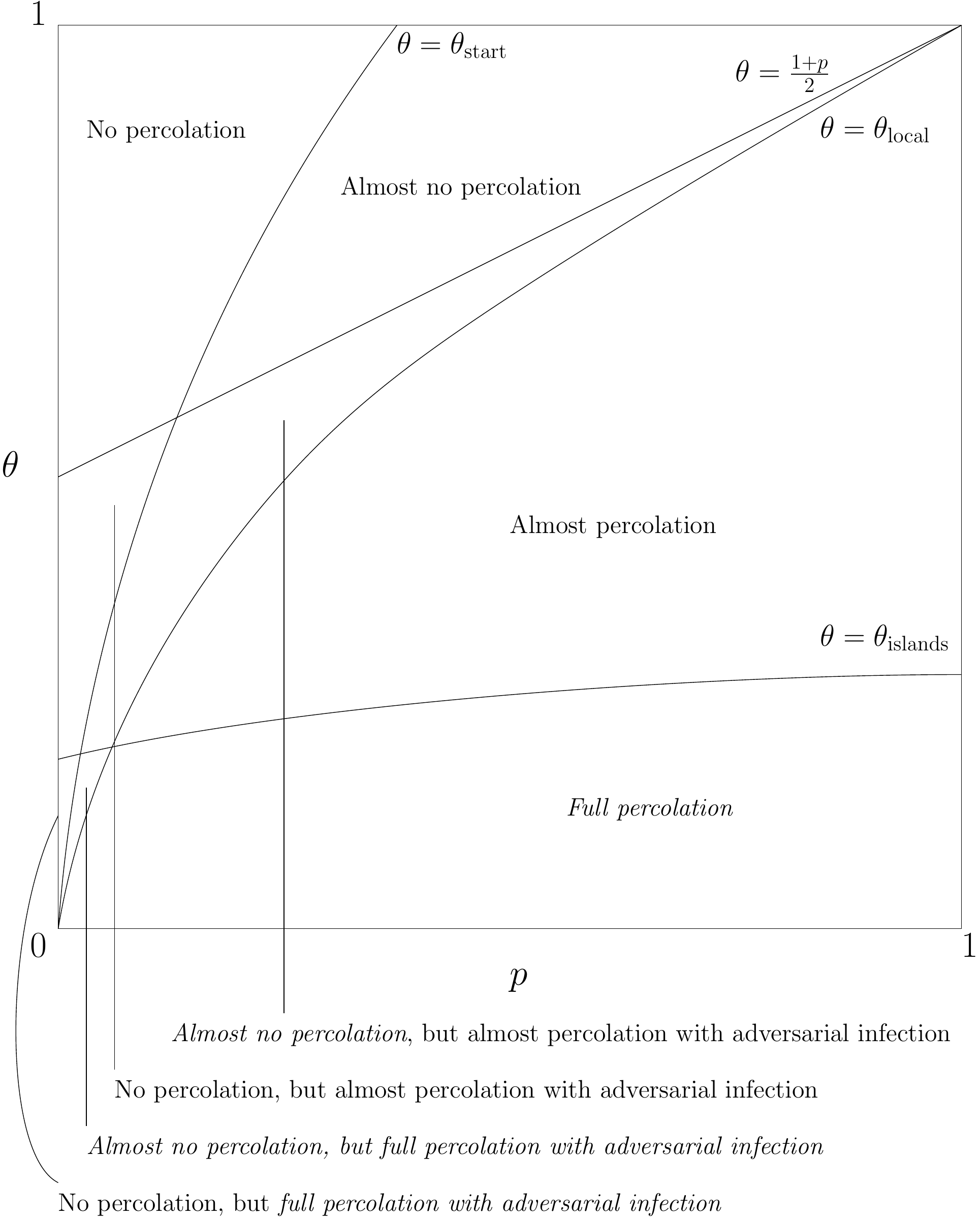}
\caption{\small Phase diagram in the $(p,\theta)$-plane for $a>1$ fixed; italics indicate conjectured behaviour}
\label{phase_d=2}
\end{figure}

Finally, we give a complete picture of the typical behaviour of the Bradonji\'c--Saniee bootstrap percolation model on the graph $G_{n,r}^1$, i.e.\ in the $1$-dimensional case. This turns out to be surprisingly complex, involving a $(p, \theta)$-phase diagram with six different regions (see the summary in Section~\ref{section: 1d}); however, unlike in the $2$-dimensional case, we can compute the various thresholds more or less explicitly. We defer an exact statement of these $1$-dimensional results to Section~\ref{section: 1d}.

\subsection{Organisation of the paper}
The remainder of this paper is organised as follows. In Section~\ref{section: preliminaries}, we prove some basic probabilistic results for Poisson point processes required for later results. We also derive thresholds for the events that some initially infected vertices eventually become infected (Proposition~\ref{prop: start}), and that some initially uninfected vertices have degree too low to ever become infected (Proposition~\ref{proposition: 0-stop}).

In Section~\ref{section: 2d}, we prove our main results for bootstrap percolation on the $2$-dimensional torus, while in Section~\ref{section: 1d}, we outline the behaviour of the Bradonji\'c-Saniee model on the circle. We end the paper in Section~\ref{section: concluding remarks} with a discussion of some of the many open problems on bootstrap percolation for random geometric graphs.

\section{Preliminaries}\label{section: preliminaries}
\subsection{Notation}
Given a host metric space $\Omega$ and a point $\mathbf{x}\in \Omega$, we write $B_r(\mathbf{x})$ for the ball in $\Omega$ of radius $r$ centred at $\mathbf{x}$. We use $\vert S\vert$ to denote the size of $S$ if $S$ is a finite set, and the Lebesgue measure of $S$ otherwise. Given $\mathbf{x}\in \mathbb{R}^d$, we denote by $\|\mathbf{x}\|$ the standard Euclidean $\ell_2$-norm of $\mathbf{x}$.

We use standard graph theoretic terminology and Landau Big O notation throughout the paper. In particular, given a graph $H$ and a subset of its vertex set $X$, we denote by $H[X]$ the subgraph of $H$ induced by $X$ and by $H-X$ the subgraph of $H$ induced by $V(H)\setminus X$.
\subsection{Probabilistic tools}
The following lemma, taken from~\cite[Lemma 1]{BalisterBollobasSarkarWalters}, greatly facilitates calculations of event probabilities, and we shall make extensive use of it throughout the paper.
\begin{lemma}[Estimating probabilities for Poisson point processes]\label{lemma: Paul}
	Let $A\subset R^d$ be measurable, and let $\rho\ge0$ be a real number such that $\rho|A|\in\Z$.
	Then the probability that a Poisson process in $\mathbb{R}^d$ with intensity $1$ has precisely $\rho|A|$ points in the region $A$ is given by
	\[
	\exp\left\{(\rho-1-\rho\log\rho)|A|+O(\log_+\rho|A|)\right\}
	\]
	with the convention that $0\log 0=0$, and $\log_+ x=\max(\log x,1)$.
\end{lemma}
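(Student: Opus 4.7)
The proof is a direct application of Stirling's approximation to the Poisson probability mass function. The starting point is that for a unit-intensity Poisson process on $\R^d$ and a measurable region $A$, the probability of seeing exactly $k$ points in $A$ is $|A|^k e^{-|A|}/k!$. Setting $k := \rho|A|$, which is a nonnegative integer by hypothesis, the quantity to estimate is $\log\!\left(|A|^{\rho|A|} e^{-|A|}/(\rho|A|)!\right)$, and the whole proof reduces to evaluating this expression asymptotically.

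I would invoke Stirling in the form $\log k! = k\log k - k + O(\log_+ k)$, which absorbs the $\tfrac{1}{2}\log(2\pi k) + O(1/k)$ correction into the error term. Expanding, the contribution $k\log|A| = \rho|A|\log|A|$ from the numerator exactly cancels the corresponding piece in $-k\log k = -\rho|A|\log\rho - \rho|A|\log|A|$. After this cancellation, what remains is precisely $(\rho - 1 - \rho\log\rho)|A|$, together with the Stirling error $O(\log_+(\rho|A|))$, as claimed.

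The only delicate point is the degenerate regime $\rho|A| \in \{0,1\}$, where Stirling's asymptotic form is weakest and the convention $0\log 0 = 0$ must be invoked; but in each such case the claim can be verified directly. For instance, if $\rho = 0$ the probability is $e^{-|A|}$, matching the leading term exactly; if $|A| = 0$ the probability is $1$ while the formula gives $\exp(O(1))$, which is trivially consistent since the $\log_+$ convention forces the error term to be at least $\Omega(1)$. There is no substantive obstacle beyond this bookkeeping --- the content of the lemma is essentially Stirling in disguise, and its usefulness lies entirely in the convenient exponential form, which will be invoked repeatedly in the probabilistic estimates of the later sections.
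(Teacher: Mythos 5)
Your proof is correct. The paper does not prove this lemma at all --- it is quoted verbatim from Balister, Bollob\'as, Sarkar and Walters \cite[Lemma 1]{BalisterBollobasSarkarWalters} --- but your argument (Stirling's formula $\log k! = k\log k - k + O(\log_+ k)$ applied to the Poisson mass function $|A|^{\rho|A|}e^{-|A|}/(\rho|A|)!$, with the $\rho|A|\log|A|$ terms cancelling) is exactly the standard proof of that cited result, and your handling of the degenerate cases $\rho|A|\in\{0,1\}$ via the conventions $0\log 0=0$ and $\log_+ x\geq 1$ is sound.
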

Further by standard properties of Poisson point processes (see e.g.\ Kingman~\cite{Kingman93}), note that the following are equivalent:
\begin{itemize}
	\item taking as the vertex-set of $G_{n,r}^d$ the outcome $\mathcal{P}$ of a Poisson point process of intensity $1$ on $T_n^d$, and then infecting each vertex of $\mathcal{P}$ independently at random with probability $p\in (0,1)$ to obtain $A_{0}$;
	\item letting $A_0$ be the outcome of a Poisson point  process of intensity $p$ on $T_{n}^d$, and taking as the vertex-set of $G_{n,r}^d$ the union $\mathcal{P}$ of $A_0$ and of the outcome of a Poisson point process of intensity $1-p$ on $T_{n}^d$.
\end{itemize}
We will thus be able to jump back and forth between these two equivalent ways of constructing $\mathcal{P}$, adopting whichever point of view makes our calculations simplest.

At several points in the paper, we will need to count the number of copies of a `local' event inside $T_{n}^d$, and the next lemma gives us some simple tools to do this. Let $\Gamma =\mathbb{Z}^d\cap [0, n^{1/d}]^d$; this may be identified with a $d$-dimensional grid of points inside $T^d_n$. Given an event $E$ defined for point-sets  in $T^d_n$, for every $\mathbf{x}\in \Gamma$ we let $E(\mathbf{x})$ denote the collection of point-sets in $T^d_n$  whose translates by $\mathbf{x}$ belong to $E$; in other words, a configuration of initially uninfected and initially infected points $(\mathcal{P}\setminus A_0, A_0)$ belongs to $E(\mathbf{x})$ if and only if $(\mathcal{P}\setminus A_0-\mathbf{x}, A_0-\mathbf{x})$ belongs to $E$.

We say that an event $E$ is \emph{$Cr$-bounded} if it is determined by what happens (i.e.\ which points are present/initially infected) within a ball of radius $Cr$ of the origin $\mathbf{0}$, for come constant $C>0$. For such an event, we define $\mathbf{E}=(\mathds{1}_{E(\mathbf{x})})_{\mathbf{x}\in \Gamma}$ to be the $\Gamma$-dimensional zero--one vector recording for which $\mathbf{x}\in \Gamma$ the event $E(\mathbf{X})$ occurs.
Given a $Cr$-bounded event $E$, let $N_E:=\sum_{\mathbf{x}}\mathds{1}_{E(\mathbf{x})}$ denote the number of $\mathbf{x}\in \Gamma$ for which $E(\mathbf{x})$ occurs.
\begin{lemma}\label{lemma: first and second moment}
	Suppose $E$ is a $Cr$-bounded event with $q:=\Prb(E)= e^{-c\log n +o(\log n)}$ for some fixed constant $c$. The following hold:
	\begin{enumerate}[(i)]
\item if	$c>1$, then with probability $1-o(1)$ we have $N_E=0$;
\item if $c<1$, then with probability $1-o(1)$ we have $N_E=n^{1-c+o(1)}$.		
\end{enumerate}
\end{lemma}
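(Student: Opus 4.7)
The plan is to use the first moment method for (i) and the second moment method for (ii), exploiting the $Cr$-boundedness of $E$ to control the covariance contributions. Throughout, note that $|\Gamma| = n(1+o(1))$ and by assumption $q = n^{-c + o(1)}$, so that $\mathbb{E}[N_E] = |\Gamma| q = n^{1-c+o(1)}$.

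For part (i), since $c > 1$ we have $\mathbb{E}[N_E] = n^{1-c+o(1)} = o(1)$, so Markov's inequality gives $\Pr(N_E \geq 1) \leq \mathbb{E}[N_E] = o(1)$, which yields $N_E = 0$ w.h.p.

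For part (ii), the upper bound $N_E \leq n^{1-c+o(1)}$ w.h.p.\ follows from another application of Markov: for any fixed $\varepsilon > 0$, $\Pr(N_E \geq n^{1-c+\varepsilon}) \leq n^{-\varepsilon + o(1)} = o(1)$. The heart of the proof is the matching lower bound, for which I would compute the second moment. Write
\[
\mathbb{E}[N_E^2] = \sum_{\mathbf{x},\mathbf{y} \in \Gamma} \Pr\bigl(E(\mathbf{x}) \cap E(\mathbf{y})\bigr),
\]
and split the sum according to whether $\|\mathbf{x} - \mathbf{y}\| > 2Cr$ (in the torus metric) or not. Since $E$ is $Cr$-bounded, $E(\mathbf{x})$ depends only on the Poisson process restricted to $B_{Cr}(\mathbf{x})$; hence if $B_{Cr}(\mathbf{x})$ and $B_{Cr}(\mathbf{y})$ are disjoint then $E(\mathbf{x})$ and $E(\mathbf{y})$ are independent, contributing $q^2$ to the sum. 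The number of ``far'' pairs is at most $|\Gamma|^2$, contributing at most $(\mathbb{E}[N_E])^2$. For ``close'' pairs with $\|\mathbf{x}-\mathbf{y}\| \leq 2Cr$, the trivial bound $\Pr(E(\mathbf{x}) \cap E(\mathbf{y})) \leq q$ suffices, and the number of such pairs is $|\Gamma| \cdot O(r^d) = O(n \log n)$. Combining gives
\[
\mathrm{Var}(N_E) \leq \mathbb{E}[N_E^2] - (\mathbb{E}[N_E])^2 \leq O(n \log n) \cdot q = n^{1-c+o(1)}.
\]

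Since $c < 1$, we have $(\mathbb{E}[N_E])^2 = n^{2(1-c)+o(1)}$, so $\mathrm{Var}(N_E)/(\mathbb{E}[N_E])^2 \leq n^{-(1-c)+o(1)} = o(1)$. Chebyshev's inequality (or Paley--Zygmund) then yields $N_E = (1+o(1))\mathbb{E}[N_E] = n^{1-c+o(1)}$ w.h.p., completing the proof. The main technical point to get right is the $Cr$-boundedness argument: one needs to verify that for large $n$ the balls $B_{Cr}(\mathbf{x})$ are genuinely balls in the torus (no wraparound, since $r = O((\log n)^{1/d})$ while the torus has side $n^{1/d}$), and that the translation structure of $\Gamma$ together with the translation invariance of the Poisson process makes $\Pr(E(\mathbf{x})) = q$ for every $\mathbf{x} \in \Gamma$. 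Neither point is hard, but both are essential to the bookkeeping.
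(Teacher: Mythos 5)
Your proof is correct, but the lower bound in part (ii) takes a genuinely different route from the paper's. You run a second-moment (Chebyshev) argument: you bound $\mathrm{Var}(N_E)$ by splitting the sum $\sum_{\mathbf{x},\mathbf{y}}\bigl(\Prb(E(\mathbf{x})\cap E(\mathbf{y}))-q^2\bigr)$ into far pairs (which are independent by $Cr$-boundedness, hence contribute zero) and close pairs (of which there are only $O(n\log n)$, each contributing at most $q$), obtaining $\mathrm{Var}(N_E)=n^{1-c+o(1)}=o\bigl((\E N_E)^2\bigr)$ and concluding by Chebyshev. The paper instead uses a sub-sampling argument: it greedily extracts a $2Cr$-separated subset $\Gamma'\subset\Gamma$ with $|\Gamma'|=\Omega(n/\log n)$, on which the indicators are \emph{mutually} independent, so that $N_E$ stochastically dominates a $\mathrm{Binomial}(|\Gamma'|,q)$ variable with mean $n^{1-c+o(1)}\gg 1$, and then applies a Chernoff bound. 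Both routes exploit exactly the same structural fact (far translates of a $Cr$-bounded event are independent). Your Chebyshev route incidentally gives slightly tighter concentration, namely $N_E=(1\pm\varepsilon)\E N_E$ w.h.p.\ for any fixed $\varepsilon$, whereas the paper's lower bound only tracks the $\Omega(nq/\log n)$ sub-sample and so is off by a $\log n$ factor from the mean; the paper's Chernoff route, conversely, would deliver exponentially small failure probabilities if one later needed them. For the statement of the lemma neither refinement matters, and both proofs are complete. Your closing caveats (no torus wraparound since $r=O((\log n)^{1/d})\ll n^{1/d}$, and $\Prb(E(\mathbf{x}))=q$ for all $\mathbf{x}$ by translation invariance) are exactly the points one needs, and both hold.
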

\begin{proof}
Part (i) is immediate from Markov's inequality.	 For the lower bound in part (ii), note that there exists a subset  $\Gamma'$ of $\Gamma$ satisfying
\begin{enumerate}[(a)]
	\item $\vert \Gamma' \vert =\Omega\left(\frac{n}{\log n}\right)$;
	\item $\forall \mathbf{x}, \mathbf{y}\in \Gamma'$ with $\mathbf{x}\neq \mathbf{y}$, $\|\mathbf{x}-\mathbf{y}\|> 2Cr$.
\end{enumerate}
Indeed, one can construct such a set $\Gamma'$ by greedily adding vertices from $\Gamma$ one by one subject to (b). By (b) and $Cr$-boundedness, the events $\left(E(\mathbf{x})\right)_{\mathbf{x}\in \Gamma'}$ are independent. Thus $N_E$ stochastically dominates a $\mathrm{Binomial}(\vert \Gamma'\vert, q)$ random variable. Since the expectation of this binomial random variable is $\vert \Gamma'\vert q =\Omega(nq/\log n)= n^{1-c+o(1)}\gg 1$, a standard Chernoff bound tells us that with probability $1-o(1)$,  $N_E\geq n^{1-c+o(1)}$.

For the upper bound in part (ii) we use Markov's inequality: the probability that $N_E$ is greater than $\left(\mathbb{E}N_E\right)\log n =O(nq \log n)= n^{1-c+o(1)}$ is $O(1/\log n)=o(1)$. Thus with probability $1-o(1)$, $N_E\leq n^{1-c+o(1)}$, as required.
\end{proof}

\subsection{Elementary considerations on the Bradonji\'c--Saniee model}
Consider the Bradonji\'c--Saniee bootstrap percolation model in dimension $d\geq 1$ with $\pi r^2=a\log n$ and $a>1$ fixed. When $\theta<p$ and $n$ is large, most vertices will immediately see more than $a\theta\log n<ap\log n$ infected neighbours, so that most uninfected vertices will immediately become infected. However, this is not the whole story. Indeed, even when $\theta$ is much smaller than $p$, there is still a chance that some vertex somewhere might see far fewer than its expected $a\log n$ neighbours (infected or not); if it in fact sees fewer than $\theta a\log n$ neighbours, then it can never become infected. In the other direction even when $\theta>p$, there could still be a chance that some vertex somewhere will see far more than its expected $pa\log n$ infected neighbours, perhaps as many as $\theta a\log n$, so that the infection could spread to that vertex.

Roughly speaking, an analysis of the Bradonji\'c--Saniee model must grapple with three separate questions: whether the infection starts to spread at all, whether it continues to spread to most of the graph,
and whether it finally infects every vertex. Each of these requires a separate analysis, even in one dimension.
As a simple consequence of the probabilistic tools we have introduced, however, we can readily answer here the question of when the infection starts at all, strengthening and generalising~\cite[Theorem 1]{BradonjicSaniee14}.
\begin{proposition}\label{prop: start}
	For $a>1$ and $0<p<\theta<1$ fixed, let
	\[
	f_{\rm start}(a,p,\theta):=a(p-\theta+\theta\log(\theta/p)).
	\]
	Then, if $f_{\rm start}(a,p,\theta)<1$, w.h.p.\  at least one initially uninfected vertex is infected in the first round
	of the bootstrap percolation process. If, however, $f_{\rm start}(a,p,\theta)>1$, then w.h.p.\ no initially uninfected vertex ever
	becomes infected.
\end{proposition}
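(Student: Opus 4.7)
The plan is to analyse $X$, the (random) number of initially uninfected vertices that become infected in the first round of the bootstrap process, and to show that $\E[X]\to 0$ in the supercritical regime and that $X\geq 1$ w.h.p.\ in the subcritical one. Working with the Poisson decomposition recalled earlier, $A_0$ and $\cP\setminus A_0$ are independent Poisson point processes on $T_n^d$ of intensities $p$ and $1-p$ respectively, so for any fixed point $\mathbf{x}$ the number $|A_0\cap B_r(\mathbf{x})|$ is Poisson with mean $pa\log n$. Applying Lemma~\ref{lemma: Paul} (with $|A|=pa\log n$ and $\rho=\theta/p$), and noting that since $\theta>p$ the upper tail of this Poisson distribution is dominated by its smallest term, one obtains
\[
q:=\Prb\bigl[|A_0\cap B_r(\mathbf{x})|\geq \theta a\log n\bigr]=n^{-f_{\rm start}(a,p,\theta)+o(1)}.
\]

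For the first assertion ($f_{\rm start}>1$), I would use the first moment method. By Campbell's formula applied to the Poisson process $\cP\setminus A_0$ of intensity $1-p$,
\[
\E[X]=(1-p)\,n\,q=n^{1-f_{\rm start}+o(1)}=o(1),
\]
and Markov's inequality gives $X=0$ w.h.p. Since the infected set is then stable after time $0$, w.h.p.\ no initially uninfected vertex ever becomes infected.

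For the second assertion ($f_{\rm start}<1$), I would apply Lemma~\ref{lemma: first and second moment}(ii) to the $Cr$-bounded (with $C=1$) event $E$ defined by the conjunction of: (a) the ball $B_{r-1}(\mathbf{0})$ contains at least $\lceil\theta a\log n\rceil$ points of $A_0$, and (b) the unit ball $B_1(\mathbf{0})$ contains at least one point of $\cP\setminus A_0$. Conditions (a) and (b) depend on disjoint regions of two independent Poisson processes, so $\Prb(E)$ factors. Since $|B_{r-1}(\mathbf{0})|=\alpha_d(r-1)^d=a\log n-O((\log n)^{(d-1)/d})=a\log n\,(1+o(1))$, the tail computation above still gives $\Prb(\text{(a)})=n^{-f_{\rm start}+o(1)}$, while $\Prb(\text{(b)})=1-e^{-(1-p)\alpha_d}=\Omega(1)$. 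Lemma~\ref{lemma: first and second moment}(ii) with $c=f_{\rm start}<1$ then yields $N_E\geq 1$ w.h.p.; and whenever $E(\mathbf{x})$ occurs, any uninfected vertex $v\in B_1(\mathbf{x})$ satisfies $B_{r-1}(\mathbf{x})\subseteq B_r(v)$ by the triangle inequality, and so sees at least $\theta a\log n$ initially infected neighbours, hence becomes infected in the first round.

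The main technical point is the small mismatch between the lattice points $\mathbf{x}\in\Gamma$ indexing Lemma~\ref{lemma: first and second moment} and the actual random positions of uninfected vertices; shrinking the ball of infected neighbours by a constant and requiring an uninfected vertex within unit distance of the grid point neatly sidesteps this at no cost to the leading-order exponent.
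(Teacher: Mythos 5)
Your proposal is correct and follows essentially the same route as the paper: a first-moment bound (via Lemma~\ref{lemma: Paul} and linearity) giving $\E[X]=n^{1-f_{\rm start}+o(1)}$ for the supercritical case, and a $Cr$-bounded local event fed into Lemma~\ref{lemma: first and second moment}(ii) for the subcritical case. The only cosmetic differences are your use of radii $1$ and $r-1$ where the paper uses $\sqrt{d}$ and $r-\sqrt{d}$, and citing Campbell's formula rather than Wald's identity; neither changes anything at leading order.
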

\begin{proof}
	Write $X$ for the number of vertices which initially see more than $a\theta\log n$ infected neighbours in $G_{n,r}^d$. By Wald's identity and Lemma~\ref{lemma: Paul} we have for $0<p<\theta<1$ that,
	\begin{align*}
	\E(X)&=n\exp\Bigl\{pa\log n\left((\theta/p)-1-(\theta/p)\log(\theta/p)\right)+O(\log_+a\theta\log n)\Bigr\}\\
	&=\exp\Bigl\{\log n(1-f_{\rm start}(a,p,\theta)+o(1))\Bigr\}.
	\end{align*}
	Consequently, if $f_{\rm start}(a,p,\theta)>1$, then by Markov's inequality w.h.p.\ $X=0$ and no initially uninfected vertex of $\mathcal{P}$ ever becomes infected.

	If on the other hand $f_{\rm start}(a,p,\theta)<1$, then let $E$ denote the event that the ball of radius $r-\sqrt{d}$ around the origin contains at least  $\theta a \log n$ initially infected vertices and that the ball of radius $\sqrt{d}$ around the origin contains at least one initially uninfected vertex of $\mathcal{P}$. Then by Lemma~\ref{lemma: Paul} and standard properties of the Poisson point process, the probability of $E$ is
	\begin{align*}
\mathbb{P}(E)&=\exp\left\{-f_{\rm start}(a,p,\theta)\log n+o(\log n)\right\} \left(1-\exp\{-(1-p)\alpha_d (\sqrt{d})^d \}\right)\\
&= \exp\left\{-f_{\rm start}(a,p,\theta)\log n+o(\log n)\right\}.
\end{align*}
By Lemma~\ref{lemma: first and second moment}(ii) w.h.p.\ $N_E=n^{1-f_{\rm start}+o(1)}$. In particular w.h.p.\ the event $E(\mathbf{x})$ occurs for some $\mathbf{x}\in \Gamma$. This implies there exists an initially uninfected vertex $v\in B_{\sqrt{d}}(\mathbf{x})\cap \mathcal{P}$ such that $B_r(v)\cap A_0\supseteq B_{r-\sqrt{d}}(\mathbf{x})\cap A_0$ contains at least $\theta a \log n$ initially infected points of $\mathcal{P}$. Thus $v$ becomes infected in the first round of the bootstrap percolation process, and w.h.p.\ $A_0\neq A_1\subseteq A_{\infty}$. This concludes the proof of the proposition.
\end{proof}
\begin{definition}\label{def: theta start}
	For $a>1$ and $p\in (0,1)$ fixed, we define $\theta_{\mathrm{start}}= \theta_{\mathrm{start}}(a,p)$ to be the supremum of the $\theta \leq 1$ such that $f_{\rm start}(a,p,\theta)<1$.
\end{definition}
\noindent Since $f_{\rm start}(a,p,p)=0$, it follows that $p\leq \theta_{\mathrm{start}}\leq 1$.
\begin{proposition}\label{proposition: 0-stop}
	For $a>1$, $p<1$  and $0<\theta<1$ fixed, set
	\[
	f_{\rm 0-stop}(a,\theta):=a(1-\theta+\theta\log\theta).
	\]
	(The reason for this choice of notation will become clear later.) Then, if $f_{\rm 0-stop}(a,\theta)<1$, w.h.p.\ at least one initially
	uninfected vertex has degree less than $\theta a\log n$ in $G_{n,r}^d$ and consequently never becomes infected. On the other hand if $f_{\rm 0-stop}(a,\theta)>1$ then w.h.p.\ the minimum degree of $G_{n,r}^d$ is at least $\theta a \log n$.
\end{proposition}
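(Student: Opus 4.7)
For the upper bound ($f_{\rm 0-stop}>1$), I would use a first moment argument. Let $Y$ count the vertices $v\in\mathcal{P}$ whose degree in $G_{n,r}^d$ is strictly less than $\theta a\log n$. By the Campbell--Mecke formula (equivalently, the Wald-style identity already used in the proof of Proposition~\ref{prop: start}), $\E Y=n\cdot\Prb[\mathrm{Poisson}(a\log n)<\theta a\log n]$. Lemma~\ref{lemma: Paul} applied with $\rho=\theta$ and $|A|=\alpha_d r^d=a\log n$ gives $\Prb[\mathrm{Poisson}(a\log n)=\lfloor\theta a\log n\rfloor]=\exp\{-f_{\rm 0-stop}\log n+o(\log n)\}$. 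Since the Poisson mass function is increasing in $k$ on $[0,a\log n]$, summing over $k<\theta a\log n$ changes this value only by a factor $O(\log n)$, which is absorbed into the error. Thus $\E Y=\exp\{(1-f_{\rm 0-stop})\log n+o(\log n)\}=o(1)$, and Markov's inequality gives w.h.p.\ $Y=0$, i.e.\ minimum degree at least $\theta a\log n$.

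For the lower bound ($f_{\rm 0-stop}<1$), I would mimic the local-event approach used in the proof of Proposition~\ref{prop: start}. Set $K:=\lfloor\theta a\log n\rfloor$ and let $F$ be the event that (a) $B_{\sqrt d}(\mathbf{0})$ contains exactly one point of $\mathcal{P}$ and this point lies outside $A_0$, and (b) the annulus $B_{r+\sqrt d}(\mathbf{0})\setminus B_{\sqrt d}(\mathbf{0})$ contains at most $K-1$ points of $\mathcal{P}$. Conditions (a) and (b) concern disjoint regions, so they are independent. The probability of (a) is $(1-p)\alpha_d(\sqrt d)^d\exp\{-\alpha_d(\sqrt d)^d\}=\Omega(1)$, while the annulus in (b) has volume $a\log n\bigl(1+o(1)\bigr)$, so by Lemma~\ref{lemma: Paul} (combined with the same left-tail/modal-term reduction as above) the probability of (b) is $\exp\{-f_{\rm 0-stop}\log n+o(\log n)\}$. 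Multiplying gives $\Prb[F]=\exp\{-f_{\rm 0-stop}\log n+o(\log n)\}$.

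The event $F$ is $(r+\sqrt d)$-bounded and $f_{\rm 0-stop}<1$, so Lemma~\ref{lemma: first and second moment}(ii) yields w.h.p.\ some $\mathbf{x}\in\Gamma$ for which $F(\mathbf{x})$ holds. When this happens, the unique point $v\in B_{\sqrt d}(\mathbf{x})\cap\mathcal{P}$ is initially uninfected and satisfies $B_r(v)\subseteq B_{r+\sqrt d}(\mathbf{x})$, so $v$ has at most $K-1<\theta a\log n$ neighbours in $G_{n,r}^d$ and hence never becomes infected. The only step that requires any care is the routine reduction of a one-sided Poisson tail to its modal term, which is immediate from Lemma~\ref{lemma: Paul} together with the monotonicity of the Poisson mass function on $[0,\lambda]$; I do not foresee any real obstacle.
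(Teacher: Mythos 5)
Your proposal is correct and follows essentially the same route as the paper: a first-moment/Markov argument via Lemma~\ref{lemma: Paul} for the case $f_{\rm 0-stop}>1$, and for $f_{\rm 0-stop}<1$ a local $O(r)$-bounded event (an uninfected point in $B_{\sqrt d}(\mathbf{x})$ together with too few points in $B_{r+\sqrt d}(\mathbf{x})$) fed into Lemma~\ref{lemma: first and second moment}(ii). The only differences are cosmetic (you ask for exactly one point in the inner ball rather than at least one uninfected point, and you spell out the reduction of the left Poisson tail to its modal term, which the paper leaves implicit).
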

\begin{proof}
	Write $Y$ for the number of vertices which have fewer than $a\theta\log n$ neighbours in $G_{n,r}^d$. Again, using Wald's identity and Lemma~\ref{lemma: Paul} we have
	\begin{align*}
	\E(Y)&=n\exp\left\{a\log n(\theta-1-\theta\log(\theta))+O(\log_+a\theta\log n)\right\}=\exp\left\{\log n(1-f_{\rm 0-stop}(a,\theta)+o(1))\right\}.
	\end{align*}
	Thus if $f_{\rm 0-stop}(a,\theta)>1$ then $\E(Y)=o(1)$ and by Markov's inequality w.h.p.\ the minimum degree of $G_{n,r}^d$ is at least $\theta a \log n$.

	On the other hand if $f_{\rm 0-stop}(a,\theta)<1$, then let $E$ be the event that the ball of radius $r+\sqrt{d}$ around the origin contains strictly fewer than  $\theta a \log n$ vertices of $\mathcal{P}$ and that the ball of radius $\sqrt{d}$ around the origin contains at least one initially uninfected vertex of $\mathcal{P}$. Then by Lemma~\ref{lemma: Paul} and standard properties of the Poisson point process, the probability of $E$ is
		\begin{align*}
	\mathbb{P}(E)&=\exp\left\{-f_{\rm 0-stop}(a,p,\theta)\log n+o(\log n)\right\} \left(1-\exp\left\{-(1-p)\alpha_d (\sqrt{d})^d \right\}\right)\\
	&= \exp\left\{-f_{\rm 0-stop}(a,p,\theta)\log n+o(\log n)\right\}.
	\end{align*}
By Lemma~\ref{lemma: first and second moment}(ii) w.h.p.\ $N_E=n^{1-f_{\rm 0-stop}+o(1)}$. In particular w.h.p.\ the event $E(\mathbf{x})$ occurs for some $\mathbf{x}\in \Gamma$. This implies there exists an initially uninfected vertex $v\in B_{\sqrt{d}}(\mathbf{x})\cap \mathcal{P}$ such that $B_r(v)\subseteq B_{r+\sqrt{d}}(\mathbf{x})$ contains strictly fewer than $\theta a \log n$ points of $\mathcal{P}$. Thus $v$ never becomes infected and  w.h.p.\ $A_{\infty}\neq \mathcal{P}$. This concludes the proof of the proposition.
\end{proof}

A few comments are in order. First, if in Proposition~\ref{proposition: 0-stop} we have $f_{\rm 0-stop}(a,\theta)=1$, then we cannot apply Lemma~\ref{lemma: Paul}
directly, since the error term will dominate. However, in this case, $\Prb(Y>0)$ will tend to some constant that is neither
0 nor 1. An equivalent remark applies to Proposition~\ref{prop: start}. More precise results can be established in these special cases using the Stein-Chen
method for Poisson approximation; however we will not pursue such questions here. Second, when, say, $f_{\rm start}(a,p,\theta)<1$, not only does the infection
start to spread somewhere, it in fact starts to spread in $n^{1-f_{\rm start}(a,p,\theta)+o(1)}=n^{\alpha+o(1)}$ different places for some $\alpha >0$, as established in the proof of Proposition~\ref{prop: start} (more specifically the lower bound on $N_E$). Third, in Proposition~\ref{proposition: 0-stop},
we have found one obstruction to full infection, but there may (and in fact will) be others.

\section{Bootstrap percolation on the torus}	\label{section: 2d}
In this section we prove our result for the Bradonji\'c--Saniee model for bootstrap percolation in the Gilbert random geometric graph $G_{n,r}^2$ on the $2$-dimensional torus.	Throughout the section we let $T_n:=T^2_n$ denote the said $2$-dimensional torus.

\subsection{Almost percolation from a large local infection: the case $\theta<\frac{1+p}{2}$}
Fix $a>1$, and let $\pi {r}^2 =a \log n$. In this subsection, our goal is to show that if $p, \theta$ are fixed and satisfy the following \emph{growing condition},
\begin{align}\label{eq: growing condition}
\pi \theta< \frac{\pi}{2}(1+p),
\end{align}
then w.h.p.\  any sufficiently large local infection spreads to almost all of $T_n$. To state our formal result (Theorem~\ref{theorem: giant spreads}), we must introduce two tilings of $[0, \sqrt{n}]^2$. Let $K\in \mathbb{N}$ be a large constant to be specified later.

\begin{definition}[Rough tiling, fine tiling]
	The \emph{rough tiling} $\mathcal{R}$ partitions $[0,\sqrt{n}]^2$ into interior-disjoint $Kcr\times Kcr$ square tiles, where $c=\frac{\sqrt{n}}{Kr\lfloor\sqrt{n}/Kr\rfloor}= 1+o(1)$ is chosen to ensure divisibility conditions are satisfied. The \emph{fine tiling} $\mathcal{F}$ is a refinement of $\mathcal{R}$ obtained by subdividing each tile of $\mathcal{R}$ into $K^4$ smaller $\frac{cr}{K}\times \frac{cr}{K}$ square tiles.
\end{definition}

Let $p,\theta $ be fixed. Suppose $p,\theta$ satisfy~\eqref{eq: growing condition}. Then for any $\eta>0$ there exist constants $C_{\eta}>1$ sufficiently large and $\delta>0$ sufficiently small such that the following hold: let $\mathbf{0}$ denote the origin in $\mathbb{R}^2$, and let $R\geq C_{\eta}r$ be a real number. Then for any point $\mathbf{x}$ at distance between $R-\delta r$ and $R+ \delta r$ of $\mathbf{0}$, the asymmetric lens  $B_{R}(\mathbf{0})\cap B_{r}(\mathbf{x})$ has area at least
\begin{align}\label{eq: lens is almost a half disc}
\vert B_{R}(\mathbf{0})\cap B_{r}(\mathbf{x})\vert \geq \frac{\pi r^2}{2}(1-\eta).
\end{align}
We can now specify our choice of $K$. Since $a, p, \theta$ are fixed and satisfy~\eqref{eq: growing condition}, there exists a constant $\eta>0$ such that
\begin{align}\label{eq: growing condition approximated}
\pi \theta< \frac{\pi}{2}(1+p)(1-2\eta)  -\eta.
\end{align}
Fix $\eta>0$ such that~\eqref{eq: growing condition approximated} is satisfied. Let $C_{\eta}$ and $\delta$ be such that~\eqref{eq: lens is almost a half disc} is satisfied. Now set $K=\lceil \max\left(4C_{\eta}, 1000/\eta, 1000/\delta\right)\rceil$.

With $K$ fixed (and with it our rough and fine tilings), we can now define tile colourings which we will use as discrete proxies for the spread of an infection in $T_n$.  We assign colours to the tiles of $\mathcal{R}$ and $\mathcal{F}$ as follows: a tile $T\in \mathcal{F}$ is coloured \textbf{white} if either it contains fewer than $(1-\eta)p\vert T\vert$ initially infected points at the start of the bootstrap percolation process,  or it contains fewer than  $(1-\eta)\vert T\vert $ points in total. Otherwise, we colour $T$ \textbf{red} if all its points are infected by the end of the bootstrap percolation process, and \textbf{blue} if this is not the case. Further, we colour a tile in $\mathcal{R}$ \textbf{white} if one of its subtiles in $\mathcal{F}$ is coloured white, \textbf{red} if all its subtiles in $\mathcal{F}$ are coloured red, and \textbf{blue} otherwise.

We will be interested in the interface between red and non-red tiles in $\mathcal{R}$. We thus equip $\mathcal{R}$ with the natural square-grid graph structure by decreeing that two tiles in $\mathcal{R}$ are adjacent if they meet in a side\footnote{Here we identify $[0, \sqrt{n}]^2$ with $\left(\mathbb{R}/\sqrt{n}\mathbb{Z}\right)^2$ in the natural way to ensure that, as is the case in the torus, the tiles in the rightmost column in $\mathcal{R}$ are adjacent with the corresponding tiles in the leftmost column, and similarly the tiles in the topmost row in $\mathcal{R}$ are adjacent with the corresponding tiles in the bottommost column.}. By a \emph{red component} in $\mathcal{R}$, we mean a connected component of red tiles in the graph $H$ thus defined on $\mathcal{R}$.

We are now in a position to state the main result of this subsection.
\begin{theorem}\label{theorem: giant spreads}
	Let $p,\theta $ be fixed. Suppose $p,\theta$ satisfy~\eqref{eq: growing condition} and let $K$ be as defined above. Then there exist constants $N\in \mathbb{N}$ and $\varepsilon>0$ such that w.h.p.\ if there is a connected component of at least $N$ red tiles in the auxiliary graph $H$ on $\mathcal{R}$, then there exists a giant connected component of $\vert \mathcal{R}\vert -o(n^{1-\varepsilon})$ red tiles in $H$. Furthermore, the non-red tiles consist of a collection of $o(n^{1-\varepsilon})$ vertex-disjoint connected subgraphs of $H$, each of which has order at most $N$.
\end{theorem}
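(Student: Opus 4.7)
The plan is to identify a high-probability ``good event'' $G$ on which the theorem holds deterministically. Under $G$, a sweeping argument will show that any red component of size at least $N$ must propagate through every adjacent non-white rough tile, and hence grow to fill all of $\mathcal{R}$ except for small uninfected islands localised around the small clusters of white tiles.

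A fine tile has area $\Theta(\log n/K^2)$, so by Lemma~\ref{lemma: Paul} the probability that a given fine tile is white is at most $n^{-\gamma}$ for some constant $\gamma=\gamma(a,p,K,\eta)>0$, and hence a rough tile is white with probability at most $K^4 n^{-\gamma}$. I would take $G$ to be the intersection of: (a) no connected cluster of $\geq N/2$ white rough tiles in $\mathcal{R}$; (b) the total number of white rough tiles is $O(n^{1-\varepsilon})$ for some $\varepsilon\in(0,\gamma)$; and (c) the number of white fine tiles inside any ball of radius $r$ is $o(\log n)$. Bound (b) follows from Markov's inequality. Bound (a) uses the standard polyomino estimate---the number of connected subgraphs of $\mathcal{R}$ of size $m$ containing a fixed tile is at most $c_0^m$ for some absolute constant $c_0$---together with the independence of fine tiles separated by distance $\geq 2Cr$, giving an expected number of $\geq N/2$-clusters of at most $|\mathcal{R}|\sum_{m\geq N/2}(c_0 K^4 n^{-\gamma})^m=o(1)$ for $N$ large relative to $1/\gamma$. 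Bound (c) follows from a similar Poisson concentration argument combined with a union bound over a net of ball centres.

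The core deterministic step is a propagation lemma: on $G$, if $T\in\mathcal{R}$ is red and $T'\in\mathcal{R}$ is adjacent to $T$ in $H$ and not white, then $T'$ is red. I would prove this by a sweeping argument. Once $T$ is entirely infected, consider a candidate uninfected vertex $v\in T'$ whose ball $B_r(v)$ intersects the currently infected region in an area well approximated by the half-disc obtained by intersecting $B_r(v)$ with an infected ball of radius $R\geq C_\eta r$ (such a ball is supplied by $T$, whose side length $Kcr$ is much larger than $C_\eta r$). Condition (\ref{eq: lens is almost a half disc}) yields intersection area at least $(1-\eta)\pi r^2/2$, which by $G$ contains at least $(1-\eta)^2 a\log n/2$ already-infected points. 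The remaining portion of $B_r(v)$, of area at least $(1+\eta)\pi r^2/2$, contains at least $(1-\eta)^2 p(1+\eta)a\log n/2$ initially infected points. By (\ref{eq: growing condition approximated}) these contributions together strictly exceed $\theta a\log n$, so $v$ becomes infected; the small shortfall coming from stray white fine tiles inside $B_r(v)$, bounded via (c), is absorbed into the $\eta$-slack. Iterating in layers of width $\delta r$ absorbs all of $T'$ into the infected region.

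The theorem then follows by combining the propagation lemma with the structure of $G$. Since on $G$ no red tile is adjacent to a blue tile, the boundary of every red component in $H$ consists entirely of white tiles. Starting from a red component of size $\geq N$, the propagation lemma forces this component to grow in waves of width $\delta r$ through every non-white region of $T_n$, halted only by white clusters. To bound the residual non-red islands, a reverse sweep applies: if an uninfected region in $T_n$ had Euclidean diameter $\geq 2C_\eta r$, then points near its boundary would satisfy the half-disc lens condition (\ref{eq: lens is almost a half disc}) with respect to the surrounding infected region, and (\ref{eq: growing condition approximated}) would force them to be infected, a contradiction. Hence each non-red island has Euclidean diameter $O(r)$, spans $O(1)$ rough tiles, and so has at most $N$ tiles, provided $K$ is chosen sufficiently large. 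Summing over the $O(n^{1-\varepsilon})$ white clusters yields the claimed $o(n^{1-\varepsilon})$ bound on the total number of non-red tiles. I expect the main obstacle to be the propagation lemma---setting up the induction on layer depth cleanly and aggregating the contributions of rare white fine tiles during the sweep.
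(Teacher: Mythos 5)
Your high-level shape is right (white tiles are rare and form only small clusters; a red tile pushes the infection into its neighbours via the half-disc lens estimate \eqref{eq: lens is almost a half disc}; residual uninfected islands hide behind white clusters), but there are two genuine gaps. First, your propagation lemma is false as stated. To infect a vertex $v$ near the far side of $T'$ you need control over \emph{all} fine tiles meeting $B_r(v)$, and this ball reaches into rough tiles other than $T$ and $T'$. If some third neighbour $T''$ of $T'$ is a white rough tile, it may have many (even all $K^4$) of its fine subtiles white, and $B_r(v)$ can then lose up to $\Theta(\log n)$ infected points --- far more than the $\eta$-slack absorbs. Your condition (c) cannot patch this: since $K$ is a constant, a ball of radius $r$ contains only $O(K^2)=O(1)$ fine tiles, so ``$o(\log n)$ white fine tiles per ball'' is vacuously true and gives no control. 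The correct hypothesis (as in Corollary~\ref{corollary: red rough tile has red neighbour or a distance 3 white neighbour}) is that \emph{no} rough tile within graph distance $3$ of $T$ is white; consequently the boundary of a red component consists not of white tiles but of tiles \emph{near} white tiles, which changes the bookkeeping downstream.

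Second, and more seriously, you have no valid argument that a red component of size $N$ grows to $|\mathcal{R}|-o(n^{1-\varepsilon})$ tiles, nor that the leftover non-red components have bounded order. Your ``reverse sweep'' is circular: an uninfected island persists precisely because white fine tiles shield it, so the same white tiles block the sweep; moreover a point just inside a large uninfected region sees at most roughly half of $B_r$ infected, which is the \emph{non}-growing direction, so \eqref{eq: lens is almost a half disc} does not force infection there. What is actually needed is a combinatorial step converting ``white clusters are small'' into ``non-red components are small'': the paper decomposes the edge boundary of a red component into dual cycles, shows each dual cycle of length $\ell$ forces a connected white component of order $\geq \ell/100$ in $H^7$ (hence all dual cycles have length $O(1)$ w.h.p.), and then invokes the Bollob\'as--Leader edge-isoperimetric inequality on the toroidal grid to conclude that any component of the complement of a size-$\geq N$ red component has order $O(1)$. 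Some isoperimetric input of this kind (short boundary $\Rightarrow$ small area, handled carefully on the torus where ``inside'' is ambiguous) is unavoidable, and it is absent from your proposal. A smaller point: your cluster bound (a) should be taken for white components in a bounded power of $H$ (the paper uses $H^7$), not in $H$ itself, since the white tiles witnessing a long boundary cycle need only be pairwise close, not adjacent.
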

\noindent In other words, once the growing condition is satisfied, any sufficiently large infection spreads to most of  $T_n$, leaving  only $o(n^{1-\varepsilon})$ isolated islands of diameter $O(\log n)$ uninfected.

The proof of Theorem~\ref{theorem: giant spreads} relies on two main ingredients. First of all, we shall use~\eqref{eq: lens is almost a half disc} and some tiling approximations to show that, in the absence of fine white tiles, an infection will spread radially outwards from a sufficiently large infected disc (this is the content of Lemma~\ref{lemma: balls grow}). Next, we shall use the Bollob\'as--Leader discrete isoperimetric inequality in the toroidal grid to show that any large component of rough red tiles has a large boundary. Combining these results with some probabilistic estimates showing that white tiles are few and far apart will then yield the final result.

In the first part of the proof, we shall use the following technical lemma, which follows from~\cite[Lemma 8]{FalgasRavryWalters12}.
\begin{proposition}\label{prop: bound on curve-meeting tiles}
	Let $\Gamma$ be a continuous, piecewise continuously differentiable curve in $T_n$. Let $\ell(\Gamma)$ be the length of the curve $\Gamma$. Then $\Gamma$ meets at most $9K\ell(\Gamma)/r$ tiles of $\mathcal{F}$.	
\end{proposition}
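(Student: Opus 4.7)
The plan is to bound the number of fine tiles met by $\Gamma$ via an arclength-based partition argument. The key numerical datum is that every tile of $\mathcal{F}$ is an axis-aligned square of side length $cr/K$, with $c = 1 + o(1)$. Parametrising $\Gamma$ by arclength and cutting it into $m = \lceil K\ell(\Gamma)/(cr) \rceil$ consecutive pieces $\Gamma_1, \ldots, \Gamma_m$, each piece has length (and hence Euclidean diameter) at most $cr/K$.

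The geometric heart of the proof is the observation that any set $S \subseteq T_n$ of Euclidean diameter at most $cr/K$ meets only $O(1)$ tiles of $\mathcal{F}$. Indeed, $S$ fits inside an axis-aligned bounding square of side $cr/K$, which intersects at most $2$ adjacent rows and $2$ adjacent columns of the fine tiling, hence at most $4$ tiles of $\mathcal{F}$ (being generous and allowing for degenerate alignments at corners gives $9$). Applying this to each $\Gamma_i$ and summing over $i$, the curve $\Gamma$ meets at most $9 \lceil K \ell(\Gamma) / (cr) \rceil$ tiles, which is at most $9 K \ell(\Gamma) / r$ once $\ell(\Gamma)$ is at least a single piece-length, the slack in the constant $9$ easily absorbing the $c = 1 + o(1)$ factor and any short-curve degeneracies.

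Operationally, the cleanest route is to quote~\cite[Lemma 8]{FalgasRavryWalters12} directly: its proof executes exactly this covering argument in essentially the present setup, so only a routine verification is needed to check that the scaling parameters (fine tile side length, host torus $T_n$) match. The toroidal boundary condition of $T_n$ causes no issue since the argument is purely local on scales $\ll \sqrt{n}$. I do not foresee a genuine obstacle; the main subtlety is simply ensuring that the approximation $c = 1 + o(1)$ is absorbed into the stated constant, which is automatic for $n$ sufficiently large.
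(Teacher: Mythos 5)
Your argument is correct and matches the paper's route: the paper simply invokes~\cite[Lemma 8]{FalgasRavryWalters12}, whose content is exactly the arclength-partition and bounding-box covering argument you spell out. You are right to flag the only caveat, namely that the bound $9K\ell(\Gamma)/r$ only beats the additive $O(1)$ from the ceiling once $\ell(\Gamma)\geq r/K$ up to constants, which holds in every application in the paper since the curves used are boundaries of lenses, lunes and discs of length $\Theta(r)$.
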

\begin{lemma}[Growing lemma]\label{lemma: balls grow}
	Suppose $\mathbf{x}$ is a point in $T_n$ and $R\geq C_{\eta}r$ is a real number such that the following hold:
	\begin{enumerate}[(i)]
		\item all fine tiles that are wholly contained inside $B_R(\mathbf{x})\setminus B_{R-2r}(\mathbf{x})$ are coloured red;
		\item no fine tile wholly contained inside $B_{R+2r}(\mathbf{x})\setminus B_{R-2r}(\mathbf{x})$ is coloured white.
	\end{enumerate}
	Then all fine tiles that are wholly contained inside $B_{R+\delta r}(\mathbf{x})\setminus B_{R-2r}(\mathbf{x})$ are coloured red.
\end{lemma}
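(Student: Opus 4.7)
The plan is to argue by contradiction, exploiting the rigidity of bootstrap dynamics: if a supposedly red tile failed to be red, it would harbour a never-infected vertex $v$ whose neighbourhood $B_r(v)$ is already forced by (i) and (ii) to contain at least $\theta a \log n$ eventually-infected points of $\mathcal{P}$. Hypothesis (i) already guarantees redness for tiles wholly inside $B_R(\mathbf{x}) \setminus B_{R-2r}(\mathbf{x})$, so it suffices to address a fine tile $T$ that is wholly inside $B_{R+\delta r}(\mathbf{x}) \setminus B_{R-2r}(\mathbf{x})$ but has some point outside $B_R(\mathbf{x})$. Since $T$ has diameter $\sqrt{2}\,r/K < \delta r$ (using $K\ge 1000/\delta$), every point of $T$ lies at distance in $[R-\delta r,\, R+\delta r]$ from $\mathbf{x}$, so that the lens bound~\eqref{eq: lens is almost a half disc} applies to any $v \in T$. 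By (ii) the tile $T$ is not white and hence contains at least one point, so if $T$ were not red one could pick some never-infected $v \in T$ to aim for a contradiction.

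\textbf{Key computation.} For such a candidate $v$, decompose its neighbourhood as $B_r(v) = L \cup (B_r(v) \setminus B_R(\mathbf{x}))$ where $L := B_r(v) \cap B_R(\mathbf{x})$. A direct check using $d(v,\mathbf{x}) \in [R-\delta r, R+\delta r]$ and $\delta<1$ yields the containments $L \subseteq B_R(\mathbf{x}) \setminus B_{R-2r}(\mathbf{x})$ and $B_r(v) \setminus B_R(\mathbf{x}) \subseteq B_{R+2r}(\mathbf{x}) \setminus B_{R-2r}(\mathbf{x})$. The bound~\eqref{eq: lens is almost a half disc} gives $|L| \ge \tfrac{\pi r^2}{2}(1-\eta)$; and since $|L| + |B_r(v)\setminus B_R(\mathbf{x})| = \pi r^2$, the same bound applied with the roles of $v$ and the complement reversed (combined with the choice of $\delta$ relative to $\eta$) forces $|B_r(v)\setminus B_R(\mathbf{x})| \ge \tfrac{\pi r^2}{2}(1-\eta)$ as well. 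Then hypothesis (i) makes every fine tile wholly in $L$ red, contributing at least $(1-\eta)|T|$ eventually-infected $\mathcal{P}$-points per tile, while hypothesis (ii) makes every fine tile wholly in $B_r(v)\setminus B_R(\mathbf{x})$ non-white, contributing at least $(1-\eta)p\,|T|$ initially infected $\mathcal{P}$-points per tile; these two sets of tiles are disjoint, so no double-counting occurs.

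\textbf{Boundary bookkeeping and main obstacle.} The only step requiring real care---and the main obstacle---is bounding the area ``lost'' to fine tiles straddling the boundaries of $L$ and of $B_r(v)\setminus B_R(\mathbf{x})$, since those are precisely the tiles for which the redness/non-whiteness guarantee may fail. These boundaries are unions of circular arcs of total length $O(r)$, so Proposition~\ref{prop: bound on curve-meeting tiles} bounds the total area of tiles meeting them by $O(r^2/K)$. Summing the two contributions yields
\[
\#\bigl\{\text{eventually infected neighbours of } v\bigr\} \;\ge\; (1-\eta)^2\,\tfrac{\pi r^2}{2}(1+p) \;-\; O(r^2/K).
\]
Recalling $\pi r^2 = a\log n$, using $(1-\eta)^2 \ge 1-2\eta$, and invoking $K \ge 1000/\eta$ to absorb the $O(r^2/K)$ error into a slack of size at most $\eta\, a\log n /\pi$, the right-hand side is at least $\theta\,a\log n$ by the approximated growing condition~\eqref{eq: growing condition approximated}. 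This contradicts the assumption that $v$ is never infected, so every such $T$ must be red, completing the proof.
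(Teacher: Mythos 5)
Your proposal is correct and follows essentially the same route as the paper's proof: reduce to a vertex $v$ in the thin annulus $B_{R+\delta r}(\mathbf{x})\setminus B_{R-\delta r}(\mathbf{x})$, split $B_r(v)$ into the lens $L=B_r(v)\cap B_R(\mathbf{x})$ (red tiles, density $\geq 1-\eta$) and the lune (non-white tiles, density $\geq p(1-\eta)$ of initially infected points), control boundary-straddling tiles via Proposition~\ref{prop: bound on curve-meeting tiles}, and invoke~\eqref{eq: growing condition approximated}; the contradiction framing is cosmetically different but equivalent. The only step you gloss over is the lower bound on the lune area, which does not follow from ``reversing'' \eqref{eq: lens is almost a half disc} (a one-sided bound) but is an easy consequence of convexity --- $L$ lies on one side of the tangent line to $\partial B_R(\mathbf{x})$ nearest $v$, giving $\vert L\vert \leq \frac{\pi r^2}{2}+2\delta r^2$ --- together with taking $\delta$ small relative to $\eta$, which the paper's setup permits.
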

\begin{proof}
	Any fine tile wholly contained inside $B_{R+\delta r}(\mathbf{x})\setminus B_{R-2r}(\mathbf{x})$ is either wholly contained inside $B_R(\mathbf{x})\setminus B_{R-2r}(\mathbf{x})$ (and hence coloured red), or, by our choice of $K$, is wholly contained inside 	$B_{R+\delta r}(\mathbf{x})\setminus B_{R-\delta r}(\mathbf{x})$ (and hence not coloured white).

	It is thus enough to show that every vertex $\mathbf{y} \in B_{R+\delta r}(\mathbf{x})\setminus B_{R-\delta r}(\mathbf{x})\cap \mathcal{P}$ is eventually infected by our bootstrap percolation process. Now by~\eqref{eq: lens is almost a half disc}, the lens $L({\mathbf{y}}):=B_{R}(\mathbf{x})\cap B_r(\mathbf{y})$ has area at least $\frac{\pi r^2}{2}(1-\eta)$. Since this lens is contained inside the annulus  $B_R(\mathbf{x})\setminus B_{R-2r}(\mathbf{x})$, every fine tile wholly contained inside $L({\mathbf{y}})$ is coloured red by Assumption (i). Further, every other tile wholly contained inside the disc $B_r(\mathbf{y})$ is not coloured white by Assumption (ii). We use this information, together with Proposition~\ref{prop: bound on curve-meeting tiles}, to show that $\mathbf{y}$ sees strictly more than $\theta \pi r^2$ infected points within distance at most $r$ of itself --- which in turn implies that $\mathbf{y}$ must become infected before the end of the process, as required.

	First of all,  the boundary of $L({\mathbf{y}})$ has length at most $2\pi r$, whence by Proposition~\ref{prop: bound on curve-meeting tiles} it intersects at most $18 \pi K$ distinct fine tiles. It follows that  $L({\mathbf{y}})$ must wholly contain a collection of red fine tiles of combined area at least
	\begin{align*}
	\vert L({\mathbf{y}})\vert - 18 \pi K\frac{c^2r^2}{K^2}>\frac{\pi r^2}{2}(1-\eta) -\frac{\eta r^2}{2},
	\end{align*}
	where the inequality follows for $n$ large enough from our lower bound on $\vert L({\mathbf{y}})\vert$, our choice of $K\geq 1000/\eta $  and the fact $c=1+o(1)$.

	Similarly, the boundary of $B_{r}(\mathbf{y})$ has length $2\pi r$ and thus, applying Proposition~\ref{prop: bound on curve-meeting tiles} as above, $B_{r}(\mathbf{y})$ must wholly contain a collection of non-white fine tiles of combined area at least $\pi r^2 -\frac{\eta r^2}{2}$. Given the definition of our colouring of fine tiles, it follows that $B_r(\mathbf{y})$ contains at least
	\begin{align*}
	\left(\frac{\pi r^2}{2}(1-\eta) -\frac{\eta r^2}{2}\right)(1-\eta) +\left(\frac{\pi r^2}{2}(1+\eta)\right)p(1-\eta)>\frac{\pi r^2}{2}(1+p)(1-2\eta) -\frac{\eta r^2}{2},
	\end{align*}
	infected points, which by~\eqref{eq: growing condition approximated} is strictly more than $\theta \pi r^2$. Thus $\mathbf{y}$ is eventually infected by the bootstrap percolation process, and the lemma follows. 	
\end{proof}
Lemma~\ref{lemma: balls grow} implies that if a red rough tile is part of the vertex-boundary of a connected component of red rough tiles in the graph $H$ on $\mathcal{R}$, then there must be a white rough tile in the vicinity.
\begin{corollary}\label{corollary: red rough tile has red neighbour or a distance 3 white neighbour}
	Let $T\in \mathcal{R}$ be coloured red. Then either all neighbours of $T$ in the auxiliary graph $H$ on $\mathcal{R}$ are coloured red, or there exists a rough tile $T'$ at graph distance at most $3$ of $T$ in $H$ such that $T'$ is coloured white.
\end{corollary}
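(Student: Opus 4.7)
My plan is to prove the contrapositive: if $T$ is red and no rough tile at graph distance at most $3$ from $T$ in $H$ is white, then every $H$-neighbour of $T$ is red. The strategy is to start from the disc inscribed in $T$ (which is entirely red) and iterate Lemma~\ref{lemma: balls grow} to push the red region radially outward in increments of $\delta r$ until it engulfs every fine sub-tile of every neighbour of $T$.

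Concretely, let $\mathbf{x}$ denote the centre of $T$ and set $R_0:=Kcr/2$, so that $B_{R_0}(\mathbf{x})$ is the inscribed disc of $T$. Since $K\geq 4C_\eta$ and $c=1+o(1)$, we have $R_0\geq C_\eta r$, and every fine tile wholly contained in $B_{R_0}(\mathbf{x})$ lies inside $T$ and is therefore red; this verifies hypothesis~(i) of Lemma~\ref{lemma: balls grow}. Hypothesis~(ii) holds as well because $B_{R_0+2r}(\mathbf{x})$ is covered by $T$ together with its $H$-neighbours, none of which contains a white fine tile by assumption. Applying the Growing Lemma extends the red region out to radius $R_0+\delta r$. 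Iterating with $R_{k+1}:=R_k+\delta r$, the red annulus produced at step $k$ contains the smaller annulus $B_{R_{k+1}}(\mathbf{x})\setminus B_{R_{k+1}-2r}(\mathbf{x})$ required for hypothesis~(i) at step $k{+}1$ (as $\delta<2$), while hypothesis~(ii) is preserved as long as $B_{R_k+2r}(\mathbf{x})$ stays inside the union of rough tiles at graph distance $\leq 3$ from $T$.

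The one nontrivial check is that the relevant radial thresholds are compatible. The closest point to $\mathbf{x}$ of any rough tile at graph distance exactly $4$ from $T$ (attained at a corner of a diagonally displaced tile of index $(\pm 2,\pm 2)$) lies at Euclidean distance $3Kcr/\sqrt{2}$ from $\mathbf{x}$, whereas the farthest point of any $H$-neighbour of $T$ from $\mathbf{x}$ lies at Euclidean distance $Kcr\sqrt{10}/2$. Since $\sqrt{10}/2<3/\sqrt{2}$, for $K$ large there is room to iterate $O(K/\delta)$ times and reach some $R_{\mathrm{final}}$ with $Kcr\sqrt{10}/2\leq R_{\mathrm{final}}$ and $R_{\mathrm{final}}+2r<3Kcr/\sqrt{2}$, which makes every iteration legitimate. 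At that point every fine tile wholly contained in $B_{R_{\mathrm{final}}}(\mathbf{x})$ is red, and in particular every fine sub-tile of every $H$-neighbour of $T$ lies in $B_{R_{\mathrm{final}}}(\mathbf{x})$ and is red. Hence every neighbour of $T$ is red, establishing the contrapositive. The main obstacle here is purely the geometric bookkeeping around these two radial thresholds; the comfortable numerical gap between $\sqrt{10}/2\approx 1.58$ and $3/\sqrt{2}\approx 2.12$ absorbs the $O(r)$ slack needed in the iteration.
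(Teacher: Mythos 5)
Your proof is correct and takes essentially the same approach as the paper's: it starts from the inscribed disc $B_{Kcr/2}(\mathbf{x})$, iterates Lemma~\ref{lemma: balls grow} in increments of $\delta r$, and uses the same two radial thresholds --- $Kcr\sqrt{10}/2$ for the farthest point of an $H$-neighbour and $3Kcr/\sqrt{2}$ for the nearest point of a graph-distance-$4$ tile --- to justify that the iteration stays within the non-white region of distance-$\le 3$ tiles. The only difference is cosmetic: you phrase the argument as a contrapositive and make the gap $\sqrt{10}/2 < 3/\sqrt{2}$ explicit, whereas the paper states the same facts more tersely.
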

\begin{proof}
	Suppose that no rough tile $T'$ at graph distance at most $3$ of $T$ in $H$ is coloured white. Let $\mathbf{x}$ denote the centre of the tile $T$. By assumption, every fine tile wholly contained inside the ball of radius $\frac{Kcr}{2}>C_{\eta}r$ about $\mathbf{x}$ is coloured red. Further, no fine tile wholly contained inside the disc of radius $\sqrt{10}Kcr/2 +2r$ about $\mathbf{x}$ is coloured white. Since the disc $B_{\sqrt{10}Kcr/2}(\mathbf{x})$ wholly contains the four rough tiles adjacent to $T$ in the auxiliary graph $H$, it follows from $\lceil (\sqrt{10}/2-1/2)Kc/\delta\rceil $ successive applications of Lemma~\ref{lemma: balls grow} that all fine tiles lying inside the four neighbours of $T$ in $H$ are coloured red, and hence that all the neighbours of $T$ in $H$ are coloured red themselves.
\end{proof}
We shall use Corollary~\ref{corollary: red rough tile has red neighbour or a distance 3 white neighbour} to show that if a connected component of red tiles in $H$ has a large boundary, then we may find a large connected component of white tiles in a sufficiently large power of $H$. Recall that the $t$-th power of $H$, denoted by $H^t$, is the graph on the vertex set of $H$ in which all pairs of distinct vertices lying at graph distance at most $t$ in $H$ are joined by an edge.

To make this argument formal, we need the standard notions of edge boundary and dual cycles. Given a subset $A\in \mathcal{R}$, the \emph{edge boundary} $\partial_e(A)$ is the collection of pairs $(T_1, T_2)$ such that $T_1\in A$, $T_2\notin A$ and $\{T_1,T_2\}$ is an edge of $H$. The dual $H^{\star}$ of the graph $H$ is the graph whose vertices are the corners of rough tiles in $\mathcal{R}$ and whose edges are the sides of rough tiles in $\mathcal{R}$. It can be shown (see e.g.~\cite[Lemma 1, Chapter 1]{BollobasRiordan06}) that the edge boundary of a connected component $\mathcal{C}$ in $H$ corresponds to a union of cycles in the dual graph $H^{\star}$.

We can now prove that to each cycle $\mathcal{C}^{\star}$ in the edge boundary of a red component $C$ in $H$, we may associate a (large) connected component of white tiles in the seventh power $H^7$ of $H$.
\begin{lemma}\label{lemma: large boundary implies large white comp in H7}
	Let $\mathcal{C}$	be a connected component of red tiles in $H$. Let $\mathcal{C}^{\star}$ be a dual cycle of length $\ell$ in the edge boundary of $\mathcal{C}$ in $H$. Let $T_0$ be an arbitrary red tile in $\mathcal{C}$ one of whose sides corresponds to an edge of the dual cycle $\mathcal{C}^{\star}$. Then there exists a connected component of white rough tiles in $H^{7}$ of order at least $\min(1,\ell/100)$, one of whose tiles is at graph distance at most $3$ of $T_0$ in $H$.
\end{lemma}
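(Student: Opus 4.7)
The plan is to walk once around the dual cycle $\mathcal{C}^{\star}$, use Corollary~\ref{corollary: red rough tile has red neighbour or a distance 3 white neighbour} to extract a nearby white rough tile for each red tile on the boundary, and show that the resulting white tiles form a closed walk in $H^{7}$, from which one reads off a connected subgraph of $H^{7}$ whose size can be bounded below by a counting argument.

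Concretely, I would enumerate the edges of $\mathcal{C}^{\star}$ cyclically as $e_{1},\ldots,e_{\ell}$, choosing the cyclic reindexing so that $e_{1}$ is a side of $T_{0}$ (hence $T_{1}=T_{0}$), and let $T_{i}\in\mathcal{C}$ be the red rough tile incident to $e_{i}$. Since the tile on the other side of $e_{i}$ lies outside $\mathcal{C}$ and is therefore not red, the Corollary yields a white rough tile $W_{i}$ at graph distance at most $3$ from $T_{i}$ in $H$; in particular $W_{1}$ is the required tile at graph distance $\le 3$ from $T_{0}$. To show that the $W_{i}$'s are pairwise close, I would carry out a case analysis on consecutive dual edges. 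Two consecutive dual edges $e_{i},e_{i+1}$ share a corner, giving three cases: convex corner ($T_{i}=T_{i+1}$), straight segment ($T_{i},T_{i+1}$ side-adjacent in $H$), or concave corner ($T_{i},T_{i+1}$ diagonally opposite across the shared corner, so at graph distance $2$ in $H$). In the first two cases the triangle inequality gives $d_{H}(W_{i},W_{i+1})\le 7$; for the concave case, the naive bound is $8$, but $T_{i}$ and $T_{i+1}$ share a common exterior tile $T'_{i}=T'_{i+1}$ incident to the concave corner, and I would exploit this together with the local geometry in the proof of the Corollary to choose $W_{i}=W_{i+1}$ (or at least $W_{i},W_{i+1}$ at $H$-distance at most $7$). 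Thus $W_{1},\ldots,W_{\ell}$ traces a closed walk in $H^{7}$, and the set of distinct $W_{i}$'s forms a connected subgraph of $H^{7}$ containing a tile within graph distance $3$ of $T_{0}$.

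For the order lower bound, I would use a straightforward double counting. Each red rough tile has at most $4$ sides, and so appears as $T_{i}$ for at most $4$ indices $i$; hence the number of distinct tiles among $T_{1},\ldots,T_{\ell}$ is at least $\ell/4$. On the other hand, a given white tile $W$ lies within $H$-distance $3$ of at most $|B_{H}(W,3)|\le 25$ rough tiles, so can serve as the Corollary witness $W_{i}$ for at most $25$ distinct $T_{i}$. Hence the set $\{W_{1},\ldots,W_{\ell}\}$ contains at least $\ell/(4\cdot 25)=\ell/100$ distinct tiles; combined with the trivial lower bound of $1$ coming from $W_{1}$ alone whenever $\ell\ge 1$, this yields the order bound in the lemma. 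The chief technical nuisance is the concave-corner case: preserving the $H^{7}$ bound rather than settling for $H^{8}$ requires the careful geometric bookkeeping sketched above, but even the looser bound $H^{8}$ would suffice for every downstream application in the paper.
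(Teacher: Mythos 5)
Your proposal is correct and takes essentially the same route as the paper: use Corollary~\ref{corollary: red rough tile has red neighbour or a distance 3 white neighbour} on each red boundary tile to obtain a nearby white tile, traverse the dual cycle to show consecutive white tiles are close in $H$, and double-count to get $\ell/100$ distinct white tiles.

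One point worth highlighting: your attention to the concave corner case exposes an inaccuracy that the paper's terser argument glosses over. As you compute, at a concave corner of $\mathcal{C}$ the two boundary red tiles $T_i,T_{i+1}$ are diagonally placed around the shared corner and hence at $H$-distance $2$, giving the bound $d_H(W_i,W_{i+1})\leq 3+2+3=8$, not $7$ as the paper asserts. Your suggested repair---forcing $W_i=W_{i+1}$ by exploiting the common non-red neighbour $T'$---is not immediately supported by Corollary~\ref{corollary: red rough tile has red neighbour or a distance 3 white neighbour} as stated (the corollary locates a white tile near a specified red tile, with no control on which side), so it would require revisiting the corollary's proof. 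But, as you correctly note, the cleaner fix is simply to work with $H^8$ throughout: Lemma~\ref{lemma: white components small} and the path-counting bound $4^{7\ell}$ in its proof, as well as the constant $N_0=100(4^7)^{1/\varepsilon}$ in the proof of Theorem~\ref{theorem: giant spreads}, go through verbatim with $7$ replaced by $8$. So this is a cosmetic slip in the paper rather than a gap in your argument. (Incidentally, the statement's $\min(1,\ell/100)$ should read $\max(1,\ell/100)$; your phrasing implicitly proves the intended $\max$ bound, which is what the application in Theorem~\ref{theorem: giant spreads} requires.)
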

\begin{proof}
	By Corollary~\ref{corollary: red rough tile has red neighbour or a distance 3 white neighbour}, we know that for every pair $e_i=(T^i_1, T^i_2)$ in the edge boundary of $\mathcal{C}$, there exists a white rough tile $T_i$ within graph distance at most $3$ of $T^i_1$. Let $e^{\star}_1, e^{\star}_2, \ldots e^{\star}_{\ell}$ be the edges of the dual cycle $\mathcal{C}^{\star}$, and let $e_1, e_2, \ldots, e_{\ell}$ be the corresponding pairs from $\partial_e(\mathcal{C})$. Assume without loss of generality that $e^{\star}_1$ is a side of $T_0$.

	Traversing the edges $e^{\star}_1, e^{\star}_2, \ldots, e^{\star}_{\ell}$ of $\mathcal{C}^{\star}$ in order, we may thus obtain a sequence of rough white tiles $T_1, T_2, \ldots, T_{\ell}$, where $T_i$ and $T_{i+1}$ are at graph distance at most $7$ of each other in $H$. Since there are $25$ rough tiles within distance at most $3$ of a given white rough tile in $H$, it follows that no white tile may be repeated more than $100$ times in our sequence (since each tile has four sides, each of which occurs at most once as an edge $e^{\star}_i$ in $\mathcal{C}^{\star}$). Thus there must be a component of at least $\ell /100$ rough white tiles in $H^7$, one of which (namely $T_1$) is within graph distance at most $3$ of $T_0$ in $H$.
\end{proof}
Thus if a red component in $H$ has a large dual cycle in its boundary, there must exist a large white component in $H^7$. On the other hand, as we now show,  w.h.p.\  there are no large white components in $H^7$. Set $k:=\lfloor \sqrt{n}/Kr\rfloor$. Our auxiliary graph $H$ on the set of rough tiles $\mathcal{R}$ is thus a $k\times k$ toroidal graph.
\begin{lemma}\label{lemma: white components small}
	There exists a constant $\varepsilon=\varepsilon (a, \eta, K)>0$ such that w.h.p.\ the following hold:
	\begin{enumerate}[(i)]
		\item there are $O(n^{1-\varepsilon}/\log n)$ white rough tiles in $H$;
		\item connected components of white rough tiles in $H^7$ have diameter at most $1/\varepsilon$ in $H^7$.
	\end{enumerate}
\end{lemma}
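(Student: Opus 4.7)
The plan is to first bound the probability that a single fine (and hence a single rough) tile is white, then deduce (i) by a direct first moment argument and (ii) by a Peierls-style counting argument that exploits the independence of whiteness across pairwise interior-disjoint rough tiles.

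For the single-tile estimate I would use the equivalent description of the model in which $A_0$ and $\mathcal{P}\setminus A_0$ are independent Poisson point processes of intensities $p$ and $1-p$ on $T_n$. A union bound over the at most $|T|$ possible deficient point counts combined with Lemma~\ref{lemma: Paul} bounds both the event that a fine tile $T$ contains fewer than $(1-\eta)|T|$ points of $\mathcal{P}$ and the event that it contains fewer than $(1-\eta)p|T|$ points of $A_0$ by $\exp\{-\gamma|T|+o(|T|)\}$, where $\gamma=\gamma(\eta,p)>0$ arises from the strict negativity of $\rho\mapsto\rho-1-\rho\log\rho$ off $\rho=1$. Substituting $|T|=c^2 a\log n/(\pi K^2)=\Theta(\log n)$ this gives a bound of the form $n^{-\beta+o(1)}$ for some $\beta=\beta(\eta,p,a,K)>0$, and a further union bound over the $K^4$ fine subtiles of a rough tile gives the same kind of bound on the probability that a given rough tile is white.

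Part (i) then follows immediately: the expected number of white rough tiles is $O\bigl(|\mathcal{R}|\cdot n^{-\beta+o(1)}\bigr)=O\bigl(n^{1-\beta+o(1)}/\log n\bigr)$, and Markov's inequality yields the desired $O(n^{1-\varepsilon}/\log n)$ bound w.h.p.\ for any $\varepsilon<\beta/2$.

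For (ii) the key observation is that since the rough tiles of $\mathcal{R}$ are pairwise interior-disjoint, whiteness events of distinct rough tiles depend on the Poisson process on disjoint regions and are hence mutually independent. As $H^7$ has bounded maximum degree (at most $(2\cdot 7+1)^2-1=224$), a standard counting lemma bounds by $C^s$, for some constant $C$, the number of connected subgraphs of $H^7$ of order $s$ containing a fixed vertex. Consequently the expected number of white connected subgraphs of $H^7$ of order $s$ is at most $|\mathcal{R}|\cdot C^s\cdot n^{-\beta s+o(s)}$, which for any constant $s\geq s_0:=\lceil 2/\beta\rceil$ and $n$ large is $o(1)$. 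Summing over $s\geq s_0$ shows that w.h.p.\ every white connected component of $H^7$ contains fewer than $s_0$ vertices, and in particular has diameter at most $s_0-1$. Taking $\varepsilon:=\min(\beta/2,1/s_0)$ then delivers both parts of the lemma. The argument is essentially routine; the only mildly delicate point is verifying that the error terms $O(\log_+\rho|T|)$ in Lemma~\ref{lemma: Paul} remain $o(|T|)$, which follows at once from $|T|=\Theta(\log n)\to\infty$.
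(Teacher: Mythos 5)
Your proposal is correct and follows essentially the same route as the paper: a Lemma~\ref{lemma: Paul}-based polynomial bound $n^{-\varepsilon}$ on the probability that a rough tile is white, a first-moment/Markov argument for (i), and a Peierls-type count in the bounded-degree graph $H^7$ using the independence of whiteness across disjoint tiles for (ii). The only cosmetic difference is that the paper counts paths of length $\ell>1/\varepsilon$ while you count connected subgraphs of order $s\geq s_0$; the two are interchangeable here.
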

\begin{proof}
	By Lemma~\ref{lemma: Paul} (applied to the $K^4$ fine subtiles of a rough tile) and Markov's inequality,	there exists a constant $\varepsilon=\varepsilon (a, \eta, K)>0$ such that the probability that a rough tile is coloured white is at most $n^{-\varepsilon}$ for all $n$ sufficiently large. Thus the expected number of white rough tiles in $H$ is at most $k^2n^{-\varepsilon}=O\left(n^{1-\varepsilon}/\log n\right)$. Applying Markov's inequality again, we obtain the first part of the lemma.

	For the second part, observe that the graph $H^7$ has maximum degree less than $4^7$, and that each tile is coloured white independently of all other tiles. In particular, the expected number of paths of white rough tiles of length $\ell>1/\varepsilon$ in $H^7$ is at most
	\begin{align*}
	k^2 4^{7\ell} n^{-(\ell +1)\varepsilon}=O(n^{-\varepsilon})=o(1),
	\end{align*}
	whence Markov's inequality tells us that w.h.p.\ no such path exists. It follows that w.h.p.\ all connected components of white rough tiles in $H^7$ have diameter at most $1/\varepsilon$ in $H^7$, as claimed.
\end{proof}
Putting Lemmas~\ref{lemma: large boundary implies large white comp in H7} and~\ref{lemma: white components small} together, we have that w.h.p.\ there is no red component in $H$ with a large dual cycle in its edge boundary. We now bring in the second main ingredient of the proof of Theorem~\ref{theorem: giant spreads}, namely a discrete isoperimetric inequality in the toroidal grid due to Bollob\'as and Leader~\cite{BollobasLeader91}, in order to show that in such circumstances if there is a large red component $\mathcal{C}$ in $H$, then this component $\mathcal{C}$ is unique and all components in $H-\mathcal{C}$ are small.

The $2$-dimensional case of the Bollob\'as--Leader edge-isoperimetric inequality for toroidal grids~\cite[Theorem 8]{BollobasLeader91} is as follows.
\begin{proposition}[Bollob\'as--Leader edge-isoperimetric inequality]\label{proposition: bollobas leader}
	Let $A$ be a subset of $\mathcal{R}$ with $\vert A\vert  \leq k^2/2$. Then  	
	\begin{align*}
	\vert \partial_e(A)\vert &\geq  \min \left(4\sqrt{\vert A\vert }, 2k\right).
	\end{align*}	
\end{proposition}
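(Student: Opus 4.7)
The plan is to prove Proposition~\ref{proposition: bollobas leader} by the standard compression method, pushing an arbitrary $A\subseteq \mathcal{R}$ through a sequence of symmetrisations that do not increase the edge boundary until one reaches an extremal configuration whose boundary can be computed directly, and which turns out to be either a combinatorial square or a horizontal/vertical band around the torus.

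First I would fix coordinates $\mathcal{R}\cong \mathbb{Z}_k\times \mathbb{Z}_k$ and, for each row $i\in\mathbb{Z}_k$, let $A_i=\{j:(i,j)\in A\}$ with $a_i:=|A_i|$. The first step is a \emph{row compression}: replace each $A_i$ by an arc $A'_i\subseteq \mathbb{Z}_k$ of length $a_i$ consisting of consecutive columns, all arcs sharing a common anchor column. A standard counting argument (counting intra-row edges not in $\partial_e$ as $a_i$ when $a_i<k$ and $k$ when $a_i=k$) shows that arcs minimise the horizontal boundary contribution. For the vertical boundary between rows $i$ and $i+1$, one uses the inequality $|A_i\triangle A_{i+1}|\geq ||A_i|-|A_{i+1}||$ together with the fact that nested arcs achieve equality, so the compression is non-increasing on this coordinate as well. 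One then applies an analogous \emph{column compression} and iterates; since each operation strictly reduces an appropriate potential (e.g., the sum of squared distances to the anchor point), the process terminates at a configuration that is invariant under both row and column compression.

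The second step is to classify these compressed sets. Any such $A$ is either (a) a \emph{rectangle} $[1,b]\times[1,c]$ (up to translation) with $b,c\leq k$, or (b) a \emph{band} of full rows or full columns, i.e.\ $\mathbb{Z}_k\times[1,c]$ with $c\leq k/2$ (the case $c>k/2$ being excluded by $|A|\leq k^2/2$). For a rectangle of dimensions $b\times c$ with $b,c<k$, the edge boundary is exactly $2b+2c\geq 4\sqrt{bc}=4\sqrt{|A|}$ by AM–GM. For a band with $c<k/2$ full rows, the edge boundary is $2k$, regardless of $c$. In the remaining mixed cases one has $|\partial_e A|\geq 2k$ directly because the set meets every column in a non-trivial, non-full way. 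Taking the worse of these bounds in each regime yields $|\partial_e A|\geq \min(4\sqrt{|A|},2k)$.

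The main obstacle is handling the toroidal topology correctly during compression: unlike in $\mathbb{Z}^d$, there is no canonical ``leftmost'' point to push toward, so one has to choose the anchor column carefully and check that the vertical-boundary argument still works when both $A_i$ and $A_{i+1}$ wrap around a circular row. The cleanest way to deal with this is to argue that one may always cyclically shift so that the complement of $A$ in each row is an arc, and to treat the case where some row is full separately, since in that case the vertical boundary between that row and an adjacent non-full row already contributes $k-a_{i+1}$ edges. Once this bookkeeping is done, the rest of the argument is the routine compression-and-classify pattern sketched above, and recovers the Bollobás–Leader inequality exactly as stated.
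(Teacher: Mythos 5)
The paper does not prove this proposition at all: it is quoted verbatim from Bollob\'as and Leader~\cite[Theorem 8]{BollobasLeader91}, so your attempt to give a self-contained proof is a genuinely different route. Unfortunately, it does not go through as written.

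The compression step is fine in spirit (row/column compression to nested arcs anchored at a common column/row does not increase the edge boundary, and one can make the termination argument work on the torus with some care). The problem is the classification step. It is \emph{not} true that a set which is a fixed point of both row- and column-compression must be a rectangle or a band of full rows/columns. For a concrete counterexample take $k=4$ and the ``staircase''
\begin{align*}
A=\{(0,0),(0,1),(0,2),\,(1,0),(1,1),\,(2,0)\},
\end{align*}
i.e.\ row $0$ is the arc $\{0,1,2\}$, row $1$ is $\{0,1\}$, row $2$ is $\{0\}$, and row $3$ is empty. Each row is an arc anchored at column $0$ and each column is an arc anchored at row $0$, so $A$ is invariant under both compressions, yet it is neither a rectangle nor a band. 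Your fall-back argument for the ``remaining mixed cases'' (``the set meets every column in a non-trivial, non-full way'') also fails here, since column $3$ is empty. This gap is exactly where the real content of the Bollob\'as--Leader theorem lies: after compression one still has to analyse a whole family of staircase-like configurations (Bollob\'as and Leader handle this with a considerably more delicate argument than the elementary ``rectangle or band'' dichotomy). As it stands, the proposal establishes the inequality only for rectangles and bands, and silently assumes those are the only cases; the general case remains open. Given the difficulty, simply citing~\cite[Theorem 8]{BollobasLeader91}, as the paper does, is the sensible choice unless you want to reproduce their full argument.

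A small secondary quibble: the count ``intra-row edges not in $\partial_e$ as $a_i$'' is off by one --- an arc of length $a_i<k$ contributes $a_i-1$ internal edges, not $a_i$. This does not affect the conclusion that arcs minimise the horizontal boundary contribution, but it is worth being precise about.
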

\begin{lemma}\label{lemma: if large comp and no large boundary cycles, then all comps in complement small}
	Let $N_0\in \mathbb{N}$ be fixed. Then for every $n$ sufficiently large, the following holds: if $\mathcal{C}$ is a connected component of order at least $(N_0)^2$ in $H$ and has the property that every dual cycle in the edge-boundary of $\mathcal{C}$ has length at most $N_0$,  then every connected component in $H[V(H)\setminus \mathcal{C}]$ has order strictly less than $(N_0)^2$.
\end{lemma}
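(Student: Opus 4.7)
The plan is to combine a topological argument on the torus with the Bollob\'as--Leader edge-isoperimetric inequality (Proposition~\ref{proposition: bollobas leader}). The key observation is that once $n$ is large enough that $k=\lfloor\sqrt{n}/Kr\rfloor > N_0$, every dual cycle of length at most $N_0$ in $H^{\star}$ is null-homotopic on the torus: any non-contractible simple closed curve in $H^{\star}$ must wrap around one of the two toroidal directions, and therefore have length at least $k$.

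First, I would recall that $\partial_e(\mathcal{C})$ decomposes as a disjoint union of dual cycles $\gamma_1,\ldots,\gamma_m$, each of length at most $N_0$ by hypothesis. Each $\gamma_i$ is then null-homotopic, so it partitions the torus into two sides; and because $\gamma_i$ is a simple closed curve all of whose crossings separate a $\mathcal{C}$-vertex from a non-$\mathcal{C}$-vertex, the $\mathcal{C}$-side is consistent along the cycle, and connectedness of $\mathcal{C}$ then forces $\mathcal{C}$ to lie entirely on one side of $\gamma_i$. Let $\tilde{D}_i$ denote the opposite side, regarded as a subset of $V(H)$. Since $\partial_e(\tilde{D}_i)=\gamma_i$ has at most $N_0$ edges, and since $|\mathcal{C}|\ge N_0^2 > N_0^2/16$ forbids $\mathcal{C}$ from being the ``small'' side of $\gamma_i$ (once $4\sqrt{|\mathcal{C}|}>N_0$), we conclude that $|\tilde{D}_i|\le k^2/2$; Proposition~\ref{proposition: bollobas leader} then yields $|\tilde{D}_i|\le N_0^2/16$.

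Now let $\mathcal{D}$ be any connected component of $H[V(H)\setminus\mathcal{C}]$. Every edge of $\partial_e(\mathcal{D})$ joins $\mathcal{D}$ to a vertex outside $\mathcal{D}$, which must lie in $\mathcal{C}$ (otherwise $\mathcal{D}$ would not be a full component of $V(H)\setminus\mathcal{C}$), so $\partial_e(\mathcal{D})\subseteq\partial_e(\mathcal{C})$ is a union of some subset of the $\gamma_i$'s. Picking any $\gamma_{i_0}$ in this subset, $\mathcal{D}$ is adjacent to $\gamma_{i_0}$ on its non-$\mathcal{C}$-side; being a connected subset of $V(H)\setminus\mathcal{C}$, it cannot escape $\tilde{D}_{i_0}$ back to the $\mathcal{C}$-side without passing through a vertex of $\mathcal{C}$. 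Hence $\mathcal{D}\subseteq\tilde{D}_{i_0}$, and $|\mathcal{D}|\le|\tilde{D}_{i_0}|\le N_0^2/16 < N_0^2$, as required.

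The main obstacle is the topological bookkeeping: establishing null-homotopy of each short dual cycle, justifying the consistent inside/outside convention along each $\gamma_i\subseteq\partial_e(\mathcal{C})$, and checking that $\tilde{D}_i$ lands in the small-set regime of Bollob\'as--Leader. All three are controlled by the single size comparison $|\mathcal{C}|\ge N_0^2 > N_0^2/16$ together with the length bound $N_0<k$, both of which hold for $n$ sufficiently large.
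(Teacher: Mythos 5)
Your proof is correct, but it routes through different intermediate objects than the paper's argument. You decompose $\partial_e(\mathcal{C})$ into its dual cycles $\gamma_i$, establish (via a null-homotopy observation plus the connectedness of $\mathcal{C}$) that each $\gamma_i$ cuts off a region $\tilde{D}_i$ on the non-$\mathcal{C}$ side, bound $\vert\tilde{D}_i\vert$ via Bollob\'as--Leader, and then locate each component $\mathcal{D}$ of $H-\mathcal{C}$ inside some $\tilde{D}_{i_0}$. The paper instead works directly component by component: it notes that for a fixed component $\mathcal{C}_{i_0}$ of $H-\mathcal{C}$ both $\mathcal{C}_{i_0}$ and its complement $\mathcal{C}\cup\bigcup_{i\neq i_0}\mathcal{C}_i$ are connected, so $\partial_e(\mathcal{C}_{i_0})$ is a \emph{single} dual cycle contained in $\partial_e(\mathcal{C})$ and hence has length at most $N_0$; it then applies Bollob\'as--Leader in exactly the same small-set/large-set dichotomy you use (to $\mathcal{C}_{i_0}$ or to its complement) to get $\vert\mathcal{C}_{i_0}\vert\le N_0^2/16$. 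The two arguments are essentially equivalent in content, but the paper's version sidesteps the topological bookkeeping you flag as the ``main obstacle'': there is no need to discuss null-homotopy or regions explicitly, since the single-cycle property of $\partial_e(\mathcal{C}_{i_0})$ follows immediately from both sides being connected, and the containment $\mathcal{D}\subseteq\tilde{D}_{i_0}$ is replaced by working with $\mathcal{D}=\mathcal{C}_{i_0}$ itself. One small imprecision in your write-up: you assert that $\partial_e(\mathcal{D})$ is a union of some subset of the $\gamma_i$'s, which needs a short justification about how the cycle decomposition of $\partial_e(\mathcal{C})$ interacts with the components of $H-\mathcal{C}$; fortunately your argument does not actually need the full claim, only that $\partial_e(\mathcal{D})$ meets \emph{some} $\gamma_{i_0}$, which is immediate.
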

\begin{proof}
	Let $N_0$ be fixed, and let $n$ be sufficiently large so as to ensure $2k>N_0$. Let $\{\mathcal{C}_i: \ i \in I\}$ be the collection of connected components in $H[V(H)\setminus \mathcal{C}]$. Note that each such component $\mathcal{C}_i$ sends an edge to $\mathcal{C}$ in $H$, and sends no edge to $\mathcal{C}_j$ for $j\neq i$. For every $i_0\in I$, consider the edge-boundary of $\mathcal{C}_{i_0}$. Since both $\mathcal{C}_{i_0}$ and its complement $\mathcal{C}\cup \bigcup_{i\in I\setminus\{i_0\}}\mathcal{C}_i$ induce connected subgraphs in $H$, the edge-boundary of $\mathcal{C}_{i_0}$ (which is a subset of the edge-boundary of $\mathcal{C}$) consists of a single dual cycle $\mathcal{C}^{\star}_{i_0}$ in $H^{\star}$.

	If $\vert \mathcal{C}_{i_0}\vert > k^2/2$, then by
Proposition~\ref{proposition: bollobas leader} applied to the complement $\mathcal{C}\cup \bigcup_{i\in I\setminus\{i_0\}}\mathcal{C}_i$ of $\mathcal{C}_{i_0}$, we have that this dual cycle $\mathcal{C}^{\star}_{i_0}$ has length at least
	\[\vert \partial_e (\mathcal{C}_{i_0})\vert \geq \min \left(4\sqrt{k^2-\vert \mathcal{C}_{i_0}\vert} , 2k\right)\geq \min\left(4\sqrt{\vert \mathcal{C}\vert}, 2k\right)>N_0,\]
	by our assumptions that $\vert \mathcal{C}\vert \geq (N_0)^2$ and $2k>N_0$. This contradicts the fact that every dual cycle in the edge boundary of $\mathcal{C}$ has length at most $N_0$. Thus it must be the case that $\vert \mathcal{C}_{i_0}\vert \leq k^2/2$.

	Then applying
Proposition~\ref{proposition: bollobas leader} to $\mathcal{C}_{i_0}$ itself, we have that the dual cycle $\mathcal{C}^{\star}_{i_0}$ has length at least
	\[\vert \partial_e (\mathcal{C}_{i_0})\vert \geq \min \left(4\sqrt{\vert \mathcal{C}_{i_0}\vert} , 2k\right).\]
	Since by assumption every dual cycle in the edge boundary of $\mathcal{C}$ has length at most $N_0$, and since $2k> N_0$, it follows from the above that $\vert \mathcal{C}_{i_0}\vert \leq (N_0)^2/16<(N_0)^2$, as required.
\end{proof}
\noindent We are now ready to complete the proof of Theorem~\ref{theorem: giant spreads}.
\begin{proof}[Proof of Theorem~\ref{theorem: giant spreads}]
	Let $\varepsilon>0$ be the constant whose existence is guaranteed by Lemma~\ref{lemma: white components small}. Now, let $N_0= 100 \left(4^7\right)^{1/\varepsilon}$ and $N=(N_0)^2$. By Lemma~\ref{lemma: white components small} and the fact that $H^7$ has maximum degree less than $4^7$, we have that w.h.p.\  (A1) there are $O(n^{1-\varepsilon}/\log n)$ white rough tiles in $H$, and all connected components of white rough tiles in $H^7$ have order at most $N_0/100$. By Lemma~\ref{lemma: large boundary implies large white comp in H7}, this in turn implies that w.h.p.\  (A2) all dual cycles in the edge boundary of red components have length at most $N_0$.

	Assume (A1) and (A2) both hold, and suppose $H$ contains a red component $\mathcal{C}$ of order $\vert \mathcal{C}\vert \geq N$. Then by Lemma~\ref{lemma: if large comp and no large boundary cycles, then all comps in complement small}, all connected components in $H-\mathcal{C}:=H[V(H)\setminus \mathcal{C}]$ have order strictly smaller than $N=(N_0)^2$. In particular, $\mathcal{C}$ is unique: all other red components must have order strictly less than $N$.

	Furthermore, we can bound the number of components in $H-\mathcal{C}$: by Lemma~\ref{lemma: large boundary implies large white comp in H7}, to each connected component $\mathcal{C}'$ whose edge boundary with $\mathcal{C}$ contains a pair $(T_0, T)$ with $T_0\in \mathcal{C}$ and $T\in \mathcal{C}'$ we may associate a connected component of white rough tiles $W$ in $H^7$, one of whose tiles is at graph distance at most $3$ of $T_0$ in $H$.

	By (A1), there are at most $O(n^{1-\varepsilon}/\log n)$ connected components of white rough tiles $W$ in $H_7$. Further, for each such $W$, there are at most $\vert W\vert 4^3\leq \frac{64}{100}N_0< N_0$ red tiles $T_0$ within graph distance at most $3$ in $H$ of a white tile in $W$. It follows from this and the remarks in the paragraph above that there are at most $4N_0 \times O(n^{1-\varepsilon}/\log n)=o(n^{1-\varepsilon})$ connected components $\mathcal{C}'$ in $H-\mathcal{C}$, each of which has order at most $N$ (thereby establishing the `furthermore' part of Theorem~\ref{theorem: giant spreads}). It follows that
	\begin{align*}
	\vert C\vert \geq \vert \mathcal{R}\vert-o(n^{1-\varepsilon}),
	\end{align*}
	i.e.\ that $\mathcal{C}$ is a giant red connected component covering all but $o(n^{1-\varepsilon})$ tiles in $H$, as claimed. This concludes the proof of Theorem~\ref{theorem: giant spreads}.
\end{proof}
\subsection{Outbreaks stay local: the case $\theta>\frac{1+p}{2}$}
Fix $a>1$, and let $\pi {r}^2 =a \log n$. In this subsection, our goal is to show that if $p, \theta$ are fixed and satisfy the following \emph{non-growing condition},
\begin{align}\label{eq: non-growing condition}
\pi \theta> \frac{\pi}{2}(1+p),
\end{align}
then w.h.p. any infectious outbreak in $T_n:=T_n^2$ remains local. To state our formal result (Theorem~\ref{theorem: no giant}), we must, as in the previous subsection, introduce two tilings of $[0,\sqrt{n}]^2$. Let $K\in \mathbb{N}$ be a large constant to be specified later.
\begin{definition}[Rough tiling, fine tiling]
	The rough tiling $\mathcal{R}$ partitions $[0,\sqrt{n}^2]$ into disjoint $cKr\times cKr$ square tiles, where $c=\frac{\sqrt{n}}{Kr\lfloor \sqrt{n}/Kr\rfloor}=1+o(1)$ is chosen to ensure divisibility conditions are satisfied. The \emph{fine tiling} $\mathcal{F}$ is a refinement of $\mathcal{R}$ obtained by subdividing each tile of $\mathcal{R}$ into $K^4$ smaller $\frac{cr}{K}\times \frac{cr}{K}$ square tiles.
\end{definition}
\begin{remark}
	Note that these tilings are technically distinct from those we used in the previous subsection. (We will pick a different value of $K$.)
\end{remark}	
Let $p,\theta $ be fixed. Suppose $p,\theta$ satisfy~\eqref{eq: non-growing condition}. Then for any $\eta>0$ there exists a constant $C_{\eta}>1/\sqrt{2}$ sufficiently large such that the area of the lune $B_{r}((C_{\eta}r,0))\setminus B_{C_{\eta}r}(\mathbf{0})$ is at most
\begin{align}\label{eq: lune not much larger than a half disc}
\vert B_{r}((C_{\eta}r,0))\setminus B_{C_{\eta}r}(\mathbf{0})\vert \leq \frac{\pi r^2}{2}(1+\eta)
\end{align}
\noindent We can now specify our choice of $K$. Since $a,p, \theta$ are fixed and satisfy~\eqref{eq: non-growing condition}, there exists a constant $\eta>0$ such that
\begin{align}\label{eq: non-growing condition approximated}
\pi \theta> \frac{\pi}{2}(1+p)(1+2\eta)  +\eta.
\end{align}
Fix $\eta>0$ such that~\eqref{eq: non-growing condition approximated} is satisfied. Let $C_{\eta}$ be such that~\eqref{eq: lune not much larger than a half disc} is satisfied. Now set $K=\lceil \max\left(2C_{\eta}, 10000/\eta\right)\rceil$.

With $K$ fixed (and with it our rough and fine tilings), we can now define tile colourings which we will use as discrete proxies for the spread of an infection in $T_n$.  We assign colours to the tiles of $\mathcal{R}$ and $\mathcal{F}$ as follows: a tile $T\in \mathcal{F}$ is coloured \textbf{black} if either it contains strictly more than $(1+\eta)p\vert T\vert$ points of $A_0$, or it contains strictly more than  $(1+\eta)\vert T\vert $ points of $\mathcal{P}$. Otherwise, we colour $T$ \textbf{red} if some of its initially uninfected points become infected at some stage in the bootstrap percolation process, and \textbf{blue} if this is not the case. Further, we colour a tile in $\mathcal{R}$ \textbf{black} if one of its subtiles in $\mathcal{F}$ is coloured black, \textbf{red} if one of its subtiles in $\mathcal{F}$ is coloured red, and \textbf{blue} otherwise.

We equip $\mathcal{R}$ with the natural square-grid graph structure by decreeing that two tiles in $\mathcal{R}$ are adjacent if they meet in a side, and as in the previous subsection, we identify $[0,\sqrt{n}]^2$ with $\left(\mathbb{R}/\sqrt{n}\mathbb{Z}\right)^2$ in the natural way. We thus obtain an auxiliary toroidal grid-graph $H$ on $\mathcal{R}$. We can now state the main result of this subsection.
\begin{theorem}\label{theorem: no giant}
	Let $p, \theta$ be fixed. Suppose $p,\theta$ satisfy~\eqref{eq: non-growing condition}, and let $K$ and our rough tiling be as defined above. Then there exists $\varepsilon>0$ such that for any constant $N$, even if one fully infects all points inside $N$ adversarially chosen tiles of $\mathcal{R}$, all but $o(n^{1-\varepsilon})$ tiles of $\mathcal{R}$ are coloured blue. Furthermore, the non-blue tiles consist of a collection of $o(n^{1-\varepsilon})$ vertex-disjoint subgraphs of $H$, each of which has order at most $100N^2/\varepsilon^2$.
\end{theorem}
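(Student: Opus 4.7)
The plan is to closely follow the structure of the preceding subsection but reverse the roles of the tiles and of the growing and stopping lemmas. The key geometric ingredient will be a \emph{non-growing lemma} dual to Lemma~\ref{lemma: balls grow}, asserting that in any region free of black fine tiles, new infections cannot spread outward. Concretely, I would show that if at some time $t$ the set of red fine tiles is contained in a ball $B_R(\mathbf{x})$ with $R \geq C_\eta r$, and no fine tile wholly contained in the annulus $B_{R+2r}(\mathbf{x}) \setminus B_R(\mathbf{x})$ is black, then no new red fine tile appears outside $B_R(\mathbf{x})$ at time $t+1$. The proof would mirror that of Lemma~\ref{lemma: balls grow}, now using the upper bound~\eqref{eq: lune not much larger than a half disc} on the lune area: any vertex $v$ just outside $B_R(\mathbf{x})$ sees, in the absence of black fine tiles within $B_r(v)$, at most $(1+\eta)p\,\pi r^2 + (1+\eta)(1-p)|B_r(v)\cap R_t|$ infected neighbours, and a short calculation shows that even in the worst case $|B_r(v) \cap R_t| \leq \pi r^2(1+\eta)/2$, this total is strictly smaller than $\theta\pi r^2$ thanks to~\eqref{eq: non-growing condition approximated}.

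Iterating this non-growing lemma in time, the red region stays trapped inside any sufficiently thick black-free ``moat'' around the adversarial seeds. From this I would derive the analog of Corollary~\ref{corollary: red rough tile has red neighbour or a distance 3 white neighbour} stating that every non-adversarial red rough tile $T$ lies within bounded $H$-graph-distance of a black rough tile: if no black rough tile were within graph distance $O(K)$ of $T$ in $H$ and no adversarial tile were within graph distance $O(K)$ of $T$ either, then applying the non-growing lemma repeatedly around a ball of radius $\Theta(Kr)$ centred at $T$ would force the red region never to reach $T$ in the first place, yielding a contradiction.

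Next, I would establish the analog of Lemma~\ref{lemma: white components small}, bounding the number and cluster sizes of black rough tiles. A standard application of Lemma~\ref{lemma: Paul} shows that the probability that a given fine tile is black decays polynomially in $n$, from which a union bound over the $K^4$ subtiles of a rough tile yields a constant $\varepsilon = \varepsilon(a, \eta, K) > 0$ such that a given rough tile is black with probability at most $n^{-\varepsilon}$. Markov's inequality then gives $O(n^{1-\varepsilon}/\log n)$ black rough tiles w.h.p., and a first-moment computation on paths of length greater than $1/\varepsilon$ in a suitable power $H^D$ of $H$ shows that w.h.p.\ all connected components of black rough tiles in $H^D$ have $H^D$-diameter at most $1/\varepsilon$.

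Combining the ingredients above, every non-blue rough tile is either one of the $N$ adversarial tiles, is black, or is a red tile within $H$-distance $O(K)$ of a black tile. Since w.h.p.\ the $o(n^{1-\varepsilon})$ black $H^D$-clusters each contribute a non-blue cluster of size $O(1/\varepsilon^2)$, and the $N$ adversarial tiles can in the worst case merge (via intervening black or red tiles, if they happen to be close) into a single non-blue cluster of size at most $O(N^2/\varepsilon^2)$, we obtain the desired bounds. I expect the main obstacle to be the proof of the non-growing lemma and its inductive iteration around non-convex red shapes: one must ensure that even when the red region develops bulges or concavities, it can still be enclosed in a well-chosen ball for which the annulus argument applies. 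A careful choice of enclosing ball---perhaps the smallest disc containing all red fine tiles reachable (via the process dynamics) from a given seed or black cluster---should suffice.
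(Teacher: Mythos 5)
Your proposal takes a genuinely different route from the paper, and the difference matters: you try to contain the red (infected) region dynamically, following it in time inside a growing enclosing ball and proving a ``non-growing lemma'' of the form \emph{if all red fine tiles lie in $B_R(\mathbf{x})$ and the annulus around $B_R(\mathbf{x})$ is black-free, then no new red fine tile appears outside $B_R(\mathbf{x})$}. You correctly flag the resulting obstacle: the red region is in general non-convex, possibly disconnected (seeds plus black pockets scattered over the torus), and the smallest enclosing disc of such a set can be enormously larger than the set itself, so the annulus of that disc may contain black tiles that have nothing to do with the local infection. Your proposed fix (enclosing disc of the tiles reachable from a given seed) does not clearly resolve this: reachability is itself defined via the dynamics you are trying to control, and distinct seeds/black clusters can interact. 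There is also an orientation mismatch between your lemma (red \emph{inside} a ball) and the corollary you want (a tile $T$ far from all black and adversarial tiles is blue, i.e., red stays \emph{outside} a ball around $T$); the lemma as stated does not directly yield the corollary.

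The paper sidesteps all of this with a static, inside-out argument. Lemma~\ref{lemma: square non-black implies center blue} fixes a tile $T$, takes $D=B_{cKr/\sqrt 2}(\mathbf{x})$ with $\mathbf{x}$ the centre of $T$ (so that $D$ and a safety margin around it lie inside the $3\times3$ grid of non-black rough tiles around $T$), and proves by induction on $t$ that no initially uninfected point of $D$ ever becomes infected: for any $v\in D\setminus A_0$, even in the worst case where \emph{every} point of the lune $B_r(v)\setminus D$ is already infected, the density caps supplied by the non-black tiles bound the infected count by $(1+\eta)|\mathrm{Lune}(v)| + (1+\eta)p(\pi r^2-|\mathrm{Lune}(v)|)+O(r^2/K)$, which is $<\theta\pi r^2$ by~\eqref{eq: lune not much larger than a half disc} and~\eqref{eq: non-growing condition approximated}. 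Because the ball $D$ is fixed in advance and the induction makes a worst-case assumption about everything outside $D$, no tracking of the shape of the red region is needed; the argument is entirely local. The probabilistic side (bounding black rough tiles and their $H^7$-cluster sizes via Lemma~\ref{lemma: Paul} and a first-moment path count) matches your plan, and the final bookkeeping (non-blue tiles are within $H$-distance $2$ of a black or adversarial tile, hence form $o(n^{1-\varepsilon})$ clusters of bounded size) also matches. The gap in your proposal is specifically the containment lemma: as written it is not strong enough, and the dynamic/enclosing-ball repair is not carried out; replacing it with the paper's fixed-ball induction closes the gap cleanly.
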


The key to Theorem~\ref{theorem: no giant} is the following lemma, showing that a non-black rough tile surrounded by non-black tiles will be coloured blue.
\begin{lemma}\label{lemma: square non-black implies center blue}
Suppose $T$ is a tile in $\mathcal{R}$. Suppose none of the tiles in the $3\times 3$ square grid of tiles of $\mathcal{R}$ centred at $T$ is coloured black. Then $T$ is coloured blue.
\end{lemma}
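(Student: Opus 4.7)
The plan is to prove a strengthening of the lemma by induction on time. Specifically, I will show that no initially uninfected vertex in the disc $D := B_{cKr}(q)$ ever becomes newly infected, where $q$ is the centre of $T$. Since $T \subseteq D$ (the circumradius of $T$ equals $cKr/\sqrt{2} < cKr$), this immediately yields the lemma: $T$ contains no newly infected points, and by the hypothesis that no tile in the $3\times 3$ grid is black no black fine subtile either, so $T$ is blue. Two further geometric facts about $D$ matter: first, $D \subseteq S$ (since $\partial S$ lies at distance $3cKr/2$ from $q$), so that $B_r(v) \subseteq S$ for every $v \in D$ when $K$ is large, meaning the non-black density bounds apply throughout; second, $D$ has radius $cKr \ge C_\eta r$ by $K \ge 2C_\eta$, which will let me invoke the lune inequality \eqref{eq: lune not much larger than a half disc}.

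The base case $t=0$ is trivial since $A_0\setminus A_0 = \emptyset$. For the inductive step, I assume $(A_{t-1}\setminus A_0) \cap D = \emptyset$ and suppose for contradiction that some initially uninfected $v \in D$ is newly infected at time $t$, so that $|A_{t-1} \cap B_r(v)| \ge \theta \pi r^2$. I then split this count according to $D$: inside $B_r(v) \cap D$ the inductive hypothesis rules out newly infected points, leaving only initial infections, which number at most $(1+\eta)p |B_r(v)\cap D|$ by the non-black condition; outside, in $B_r(v) \setminus D$, I bound infected points crudely by total density, giving at most $(1+\eta)|B_r(v)\setminus D|$. Boundary-tile corrections of order $O(\eta r^2)$, obtained from Proposition~\ref{prop: bound on curve-meeting tiles} applied to both $\partial B_r(v)$ and $\partial D \cap B_r(v)$ (each an arc of length $O(r)$) and absorbed using $K \ge 10000/\eta$, are negligible.

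The key geometric step is to bound $|B_r(v)\setminus D| \le \pi r^2 (1+\eta)/2$ uniformly for $v \in D$. If $\|v - q\| < C_\eta r$, then $\|v - q\| + r < cKr$ for $K$ large, so $B_r(v) \subseteq D$ and the lune is empty. Otherwise $\rho := \|v - q\| \in [C_\eta r, cKr]$, and I consider the auxiliary disc $D_v := B_\rho(q)$, which satisfies $D_v \subseteq D$ and $v \in \partial D_v$. Since $\rho \ge C_\eta r$, inequality \eqref{eq: lune not much larger than a half disc} gives $|B_r(v)\setminus D_v| \le \pi r^2(1+\eta)/2$ (the lune only shrinks as $\rho$ grows), whence $|B_r(v)\setminus D| \le |B_r(v)\setminus D_v| \le \pi r^2(1+\eta)/2$ as well. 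Substituting,
\[
|A_{t-1} \cap B_r(v)| \le (1+\eta)\bigl[p\pi r^2 + (1-p)|B_r(v)\setminus D|\bigr] + O(\eta r^2) \le (1+\eta)\pi r^2 \bigl[\tfrac{1+p}{2} + \tfrac{\eta(1-p)}{2}\bigr] + O(\eta r^2),
\]
which by the strengthened non-growing condition \eqref{eq: non-growing condition approximated} is strictly smaller than $\theta \pi r^2$, contradicting $v$'s infection. The induction closes, giving $(A_\infty \setminus A_0)\cap D = \emptyset$ and in particular no newly infected vertex in $T \subseteq D$.

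The main subtlety lies in the choice of the disc $D$. The natural first attempt --- the disc inscribed in $T$ of radius $cKr/2$ --- fails to cover the four corner regions of $T$, and these cannot be handled by separately inscribed discs, since no disc of radius at least $C_\eta r$ can be placed inside $T$ passing through a point sufficiently close to a corner. The choice $D = B_{cKr}(q)$ works precisely because taking the radius equal to the full side length of $T$ (rather than half) comfortably contains $T$ including its corners, while still leaving $D$ strictly inside $S$; this lets a single induction handle all points of $T$ uniformly, without any separate corner-region analysis.
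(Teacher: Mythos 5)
Your proof is correct and follows essentially the same strategy as the paper's: the paper likewise proves the stronger claim that no initially uninfected point of a disc containing $T$ ever becomes infected, by induction on time, bounding the worst-case infected count in $B_r(v)$ via the lune inequality \eqref{eq: lune not much larger than a half disc}, the non-black density bounds, and boundary-tile corrections from Proposition~\ref{prop: bound on curve-meeting tiles}. The only cosmetic difference is the choice of disc --- the paper uses the circumscribed disc $B_{cKr/\sqrt{2}}(\mathbf{x})$ of $T$ rather than your slightly larger $B_{cKr}(q)$ --- and your explicit monotonicity argument for the lune area via the auxiliary disc $D_v$ makes precise a step the paper only asserts.
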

\begin{proof}
Let $\mathbf{x}$ denote the centre of the tile $T$. We shall prove the stronger claim that no initially uninfected vertex in the ball $B_{cKr/\sqrt{2}}(\mathbf{x})\supseteq T$ ever becomes infected.

Indeed, suppose $t\geq 0$ and no point in $B_{cKr/\sqrt{2}}(\mathbf{x})\setminus A_0$ has yet become infected. Consider any such point $v$. Which infected points does $v$ see within distance $r$ of itself? In a worst-case scenario, every point in the lune $\mathrm{Lune}(v)=B_r(v)\setminus B_{cKr/\sqrt{2}}(\mathbf{x})$ has become infected. We show that even if this was the case, $v$ does not become infected in the next round of the bootstrap percolation process.

Observe first of all that the length of the boundary $\partial\mathrm{Lune}(v)$ of the lune $\mathrm{Lune}(v)$ is at most $2\pi r$, and thus $\partial\mathrm{Lune}(v)$ meets at most $18\pi K$ tiles of $\mathcal{F}$ by Proposition~\ref{prop: bound on curve-meeting tiles}. Similarly, the boundary of the asymmetric lens $\mathrm{Lens}(v)=B_r(v)\cap B_{cKr/\sqrt{2}}(\mathbf{x})$ has length at most $2\pi r$ and thus meets at most $18\pi K$ tiles of $\mathcal{F}$.

Since both $T$ and the eight tiles around it are not coloured black, each fine tile wholly contained in $\mathrm{Lens}(v)$ contains at most $(1+\eta)p\frac{c^2r^2}{K^2}$ points of $A_0$, and contains no other point of $A_t$ by our assumption. Further, every other fine tile having non-empty intersection with $B_r(v)$ contains at most $(1+\eta)\frac{c^2r^2}{K^2}$ points of $\mathcal{P}$ in total, and thus at most that many points of $A_t$.

By~\eqref{eq: lune not much larger than a half disc} and our choices of $\eta$ and $K$, we have that the area of $\mathrm{Lune}(v)$ is at most $\frac{\pi r^2}{2}(1+\eta)$ (since the area of the lune is maximised if $v$ lies on the circle of radius $cKr/\sqrt{2}$ about $\mathbf{x}$, and $cK/\sqrt{2}>C_{\eta}$). It follows that for $n$ large enough the number of infected points of $A_t$ within distance $r$ of $v$ is at most
\begin{align*}
\vert B_r(v)\cap A_t\vert &\leq (1+\eta)\vert \mathrm{Lune}(v)\vert + (1+\eta)p(\pi r^2 -\vert \mathrm{Lune}(v)\vert) +36\pi K(1+\eta)\frac{c^2r^2}{K^2}\\
&< \pi r^2 \left(\frac{1+p}{2}\right)(1+2\eta) + \eta r^2< \theta \pi r^2.
\end{align*}
Here, the first strict inequality follows from our bound on the area of $\mathrm{Lune}(v)$, the facts that $\eta\leq \pi$  and $c<2$ for $n$ large enough, and from our choice of $K$ ensuring $144 (1+\pi)\pi /K< \eta$. The second strict inequality follows from~\eqref{eq: non-growing condition approximated}.

In particular, $v$ sees strictly fewer than $\theta a \log n$ infected points from $A_t$, and does not become infected in the next round of the bootstrap percolation process. Since $v\in B_{cKr/\sqrt{2}}(\mathbf{x})\setminus A_0$ was arbitrary, it follows by induction on $t$ that $B_{cKr/\sqrt{2}}(\mathbf{x})\setminus A_0=B_{cKr/\sqrt{2}}(\mathbf{x})\setminus A_{\infty}$. Since the rough tile $T$ is a subset of $B_{cKr/\sqrt{2}}(\mathbf{x})$ and is not coloured black, it follows that $T$ is coloured blue as claimed.
 \end{proof}
Similarly to Lemma~\ref{lemma: white components small}, we now prove:
\begin{lemma}\label{lemma: black components small}
Let $p\in (0,1)$ be fixed. Then there exists a constant $\varepsilon=\varepsilon (a, \eta, K)>0$ such that w.h.p.\ the following hold:
\begin{enumerate}[(i)]
	\item there are $O(n^{1-\varepsilon}/\log n)$ black rough tiles in $H$;
	\item connected components of black rough tiles in $H^7$ have diameter at most $1/\varepsilon$.
\end{enumerate}
\end{lemma}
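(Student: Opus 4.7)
The plan is to mimic the proof of Lemma~\ref{lemma: white components small} essentially verbatim, with the lower-tail Poisson estimates replaced by corresponding upper-tail estimates. The only substantive step is establishing the single-tile bound: for some constant $\varepsilon = \varepsilon(a, \eta, K) > 0$, the probability that a fixed rough tile $T \in \mathcal{R}$ is coloured black is at most $n^{-\varepsilon}$. Once this is in hand, parts (i) and (ii) follow by the same first-moment and path-counting arguments as in the white case.

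To obtain that single-tile bound I would apply Lemma~\ref{lemma: Paul} separately to each of the $K^4$ fine subtiles of $T$. A fine subtile $T'$ has area $|T'| = c^2 r^2 / K^2 = \Theta(\log n)$, and the counts $|T' \cap \mathcal{P}|$ and $|T' \cap A_0|$ are independent Poisson variables with means $|T'|$ and $p|T'|$ respectively. Since the function $g(\rho) := \rho - 1 - \rho \log \rho$ is strictly negative for every $\rho \neq 1$, Lemma~\ref{lemma: Paul} (summed over the polynomially many relevant values of $\rho > 1+\eta$, which only affects the $o(\log n)$ error term) yields
\[
\Prb\bigl(|T' \cap \mathcal{P}| \geq (1+\eta)|T'|\bigr) \leq \exp\bigl(-\gamma_1 \log n + o(\log n)\bigr),
\]
and analogously
\[
\Prb\bigl(|T' \cap A_0| \geq (1+\eta) p |T'|\bigr) \leq \exp\bigl(-\gamma_2 \log n + o(\log n)\bigr),
\]
for constants $\gamma_1, \gamma_2 > 0$ depending on $a, \eta, K, p$. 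A union bound over the $K^4$ fine subtiles and the two ``excess'' events then gives $\Prb(T \text{ is black}) \leq n^{-\varepsilon}$ for all sufficiently large $n$.

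Given this bound, part (i) is immediate: there are $k^2 = O(n/\log n)$ rough tiles, so linearity of expectation bounds the expected number of black rough tiles by $O(n^{1-\varepsilon}/\log n)$, and Markov's inequality finishes the job. For part (ii) the crucial point is that the black-colouring of a rough tile depends only on the Poisson process restricted to that tile, so the colours of pairwise disjoint rough tiles are mutually independent. A path of length $\ell$ in $H^7$ visits $\ell+1$ distinct (hence spatially disjoint) rough tiles, so the probability it is monochromatically black is at most $n^{-(\ell+1)\varepsilon}$; combining this with the fact that $H^7$ has maximum degree less than $4^7$, the expected number of monochromatic black paths of length $\ell > 1/\varepsilon$ is at most
\[
k^2 \cdot 4^{7\ell} \cdot n^{-(\ell+1)\varepsilon} = O(n^{-\varepsilon}) = o(1),
\]
and Markov's inequality then yields the claimed diameter bound. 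I foresee no real obstacle: the proof transfers symbol-for-symbol from the white case, since Lemma~\ref{lemma: Paul} treats the two tails of the Poisson distribution on an equal footing, and the strict concavity of $g$ at $\rho = 1$ guarantees the required positivity of $\gamma_1$ and $\gamma_2$.
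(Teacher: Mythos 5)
Your proposal is correct and follows the paper's argument essentially verbatim: apply Lemma~\ref{lemma: Paul} to the $K^4$ fine subtiles to obtain the single-tile bound $\Prb(T\text{ black})\leq n^{-\varepsilon}$, deduce part (i) by linearity of expectation and Markov, and deduce part (ii) by the independence of tile colours, the degree bound $4^7$ on $H^7$, and a first-moment count of long monochromatic paths. The extra detail you supply about summing the Poisson upper tail over $\rho>1+\eta$ is precisely the routine bookkeeping the paper compresses into ``by Lemma~\ref{lemma: Paul} $\ldots$ and Markov's inequality,'' and your observation that Lemma~\ref{lemma: Paul} treats both Poisson tails symmetrically is exactly why the argument ports over from Lemma~\ref{lemma: white components small} without change.
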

\begin{proof}
		By Lemma~\ref{lemma: Paul} (applied to the $K^4$ fine subtiles of a rough tile) and Markov's inequality,	there exists a constant $\varepsilon=\varepsilon (a, \eta, K)>0$ such that the probability that a rough tile is coloured black is at most $n^{-\varepsilon}$ for all $n$ sufficiently large. Thus the expected number of black rough tiles in $H$ is at most $\left(\frac{\sqrt{n}}{Kr}\right)^2n^{-\varepsilon}=O\left(n^{1-\varepsilon}/\log n\right)$. Applying Markov's inequality again, we obtain the first part of the lemma.

	For the second part, observe that the graph $H^7$ has maximum degree less than $4^7$, and that each rough tile is coloured black independently of all other tiles. In particular, the expected number of paths of black rough tiles of length $\ell>1/\varepsilon$ in $H^7$ is at most
	\begin{align*}
	\left(\frac{\sqrt{n}}{Kr}\right)^2 4^{7\ell} n^{-(\ell +1)\varepsilon}=O(n^{-\varepsilon})=o(1),
	\end{align*}
	whence Markov's inequality tells us that w.h.p.\ no such path exists. It follows that w.h.p.\ all connected components of black rough tiles in $H^7$ have diameter at most $1/\varepsilon$, as claimed.
\end{proof}
We can now prove Theorem~\ref{theorem: no giant}.
\begin{proof}[Proof of Theorem~\ref{theorem: no giant}]
Let $\varepsilon=\varepsilon(\eta, K)>0$ be as in Lemma~\ref{lemma: black components small}. Consider the non-blue tiles in $\mathcal{R}$. By Lemma~\ref{lemma: square non-black implies center blue}, every non-blue tile must either be within distance at most $2$ in $H$ of either a black tile or of one of the at most $N$ adversarially infected tiles.

By Lemma~\ref{lemma: black components small}(i), this immediately implies that w.h.p.\ all but $o(n^{1-\varepsilon})$ tiles of $\mathcal{R}$ are coloured blue. Furthermore, by Lemma~\ref{lemma: black components small}(ii), w.h.p.\ every connected component of black rough tiles has diameter at most $1/\varepsilon$ in $H^7$. It follows that every component of black or adversarially infected rough tiles has diameter at most $(N+1)/\varepsilon$ in $H^7$, and hence order at most $4N^2/\varepsilon^2$.

Now, by Lemma~\ref{lemma: square non-black implies center blue}, to every connected component of non-blue tiles in $H^2$, one may associate a connected component of  black or adversarially infected rough tiles in $H^7$ (since every non-blue tile must be within distance $2$ of a black or adversarially infected rough tile). Since there are fewer than $25$ rough tiles within distance at most $2$ in $H$ of a given rough tile, it follows from our earlier bound on the order of connected components of black or adversarially infected rough tiles in $H^7$ that every connected component of non-blue tiles in $H^2$ must have order at most $100N^2/\varepsilon^2$. This concludes the proof of the theorem.
\end{proof}

\subsection{Proof of Theorem~\ref{theorem: (1+p)/2 threshold}}
With our tiling results Theorems~\ref{theorem: giant spreads} and~\ref{theorem: no giant} in hand, we can prove the main results of this paper.
\begin{proof}[Proof of Theorem~\ref{theorem: (1+p)/2 threshold}(i)]
Suppose  $(a,p, \theta)$ is fixed with $a>1$ and $(p,\theta)$ satisfying the growing condition~\eqref{eq: growing condition}. Then there exists $\eta=\eta(p, \theta)>0$ such that~\eqref{eq: growing condition approximated} is satisfied. We can then define $K=K(\eta)$ and the rough tiling $\mathcal{R}$ of $T_n$ as in Theorem~\ref{theorem: giant spreads}. Let $N\in \mathbb{N}$ and $\varepsilon>0$ be constants such that the conclusions of Theorem~\ref{theorem: giant spreads} hold (note our choice of these constants depends only on $a,p, \theta$). Recall from the proof of Theorem~\ref{theorem: giant spreads} (or more specifically of Lemma~\ref{lemma: white components small}) that we may also ensure with our choice of $\varepsilon$ that the probability that a rough tile in $\mathcal{R}$ is coloured white is at most $n^{-\varepsilon}$, provided $n$ is taken sufficiently large.

Now we select a constant $C=C(a, p, \theta)$ sufficiently large such that any ball of radius $Cr$ in $T_n$ wholly contains a connected component in $H$ of at least $N$ non-white rough tiles of $\mathcal{R}$. This requires a little calculation. Set $M=\lceil\frac{2N}{\varepsilon}\rceil$. Observe that there are at most $n$ ways of choosing an $M  \times M$ square grid of rough tiles in $\mathcal{R}$. Taking a simple union bound, the probability that there are at least $\frac{2}{\varepsilon}N$ white rough tiles in such an $M\times M$ grid is at most
\begin{align*}
2^{M^2} \left(n^{-\varepsilon}\right)^{\frac{2}{\varepsilon}N}=O(n^{-2N}).
\end{align*}
By Markov's inequality, it follows that w.h.p.\ every $M\times M$ grid of rough tiles in $\mathcal{R}$ contains fewer than $\frac{2}{\varepsilon}N$ white rough tiles. In particular, every such grid contains a column of $\frac{2}{\varepsilon}N>N$ non-white rough tiles.

We now take $C$ sufficiently large so as to ensure that every ball of radius $Cr$ wholly contains an $M\times M$ square grid of rough tiles. This is easily done: $C= 4MKc$ will certainly do, for instance. It follows that if we infect any ball $B$ of radius $Cr$ in $T_n$, then, changing the colour of the non-white rough tiles wholly contained inside $B$ to red, this yields w.h.p.\ a connected component of red rough tiles of order at least $N$ in $H$. Applying Theorem~\ref{theorem: giant spreads}, we obtain that all but $o(n^{1-\varepsilon})$ of the rough tiles from $\mathcal{R}$ are coloured red following the bootstrap percolation process started from the initially infected set $A_0\cup (B\cap \mathcal{P})$.

By Lemma~\ref{lemma: Paul},  w.h.p.\ every rough tile in $\mathcal{R}$ contains at most $O(\log n)$ vertices, so we deduce from the above that all but $O(n^{1-\varepsilon}\log n)=o(n)$ vertices of $\mathcal{P}$ eventually become infected.

This only leaves the ``Furthermore'' part of Theorem~\ref{theorem: (1+p)/2 threshold}(i) to establish. Consider a component of non-red rough tiles in $H^2$. Such a component is a union of disjoint components of non-red rough tiles in $H$, connected by paths of length $2$ in $H$ where the middle tile $T_0$ in the path is red and the two other tiles are non-red and belong to distinct components. By Corollary~\ref{corollary: red rough tile has red neighbour or a distance 3 white neighbour}, we know that there must be a white tile at distance at most $3$ from $T_0$. Much as in the proof of Theorem~\ref{theorem: giant spreads}, it then follows that to the edge boundary \emph{in $H$} of an $H^2$-connected component of non-red tiles we may associate an $H^7$-connected component of white tiles $W$.  By Lemma~\ref{lemma: white components small}, we know that w.h.p.\ all such components $W$ have diameter at most $1/\varepsilon$.

There are at most $\vert W\vert 4^3$ red tiles $T_0$ within graph-distance $3$ in $H$ of a white tile in $W$, each red tile $T_0$ is the middle point of a path of length $2$ in $H$ between at most $4$ distinct non-red components in $H$, and by Theorem~\ref{theorem: giant spreads} w.h.p.\ all non-red components in $H$ have order at most $N$. It follows that all connected components of non-red tiles in $H^2$ have order at most $(4^{7})^{1/\varepsilon}4^3 4N$.

Now since $K>1$, to each connected component of forever-uninfected vertices in $G_{n,r}[\mathcal{P}\setminus A_0]$  is associated a collection of non-red rough tiles forming a (subset of) a connected component in $H^2$. It follows that the Euclidean diameter in $T_n$ of such a component is bounded above by $(4^{7})^{1/\varepsilon}4^3 4N \sqrt{2Kcr}=O(\sqrt{\log n})$, as claimed. This concludes the proof of Theorem~\ref{theorem: (1+p)/2 threshold}(i).

\end{proof}

\begin{proof}[Proof of Theorem~\ref{theorem: (1+p)/2 threshold}(ii)]
Suppose  $(a,p, \theta)$ is fixed with $a>1$ and $(p,\theta)$ satisfying the non-growing condition~\eqref{eq: non-growing condition}. Then there exists $\eta=\eta(p, \theta)>0$ such that~\eqref{eq: non-growing condition approximated} is satisfied. We can then define $K=K(\eta)$ and the rough tiling $\mathcal{R}$ of $T_n$ as in Theorem~\ref{theorem: no giant}. Let $\varepsilon>0$ be such that the conclusion of Theorem~\ref{theorem: no giant} holds (note our choice of $\varepsilon$ depends only on $a,p, \theta$).

Let $C>1$ be fixed. Let $B$ be an arbitrarily chosen ball of radius $Cr$ in $T_n$. By Proposition~\ref{prop: bound on curve-meeting tiles}, $B$ meets at most
\begin{align*}
N= \left\lceil\frac{\pi (Cr)^2}{(Kcr)^2} + 9K(2\pi Cr)/r \right\rceil<100KC^2
\end{align*}
tiles of $\mathcal{R}$. Fully infect all points inside these at most $N$ tiles and apply Theorem~\ref{theorem: no giant} to deduce that even if all points in $\mathcal{P}\cap B$ are infected, then w.h.p.\  all but $o(n^{1-\varepsilon})$ tiles of $\mathcal{R}$ are coloured blue.

By Lemma~\ref{lemma: Paul} and a union-bound, w.h.p.\ every tile of $\mathcal{R}$ contains at most $O(\log n)$ points of $\mathcal{P}$. Since the point-set $A_{\infty}\setminus (A_0\cup B)$ is contained inside the union of the non-blue tiles of $\mathcal{R}$, it follows that $A_{\infty}\setminus (A_0\cup B)$ contains at most $o(n^{1-\varepsilon} \log n)=o(n)$ points of $\mathcal{P}$.

This only leaves the last part of Theorem~\ref{theorem: (1+p)/2 threshold}(ii) to establish. Here, note that we established in the proof of Theorem~\ref{theorem: no giant} that in fact all connected components of non-blue tiles in $H^2$ have order at most $100N^2/\varepsilon^2=O(1)$. Since $K>2$, the tiles containing points of a connected component in $G_{n,r}^2[A_{\infty}\setminus (A_0\cup B)]$ must form a connected component of non-blue tiles in $H^2$. It immediately follows that each such connected component has Euclidean diameter $O(\sqrt{\log n})$ in $T_n$. This concludes the proof of Theorem~\ref{theorem: (1+p)/2 threshold}(ii).
\end{proof}



	\subsection{Starting a sufficiently large local outbreak: proof of Theorem~\ref{theorem: threshold for symmetric local growth}}\label{subsection: local growth}
With Theorem~\ref{theorem: (1+p)/2 threshold} in hand, we now turn our attention to the problem of determining when (for which triples $(a, p, \theta)$) a large local outbreak will occur w.h.p.\ in the Bradonji\'c--Saniee model for bootstrap percolation on random geometric graphs. We are unfortunately unable to answer this question rigorously, but we can relate it to the solution of an optimisation problem which we conjecture gives the correct threshold for `large' local outbreaks.

\begin{figure}[htp!]
\centering
\includegraphics[width=0.37\linewidth]{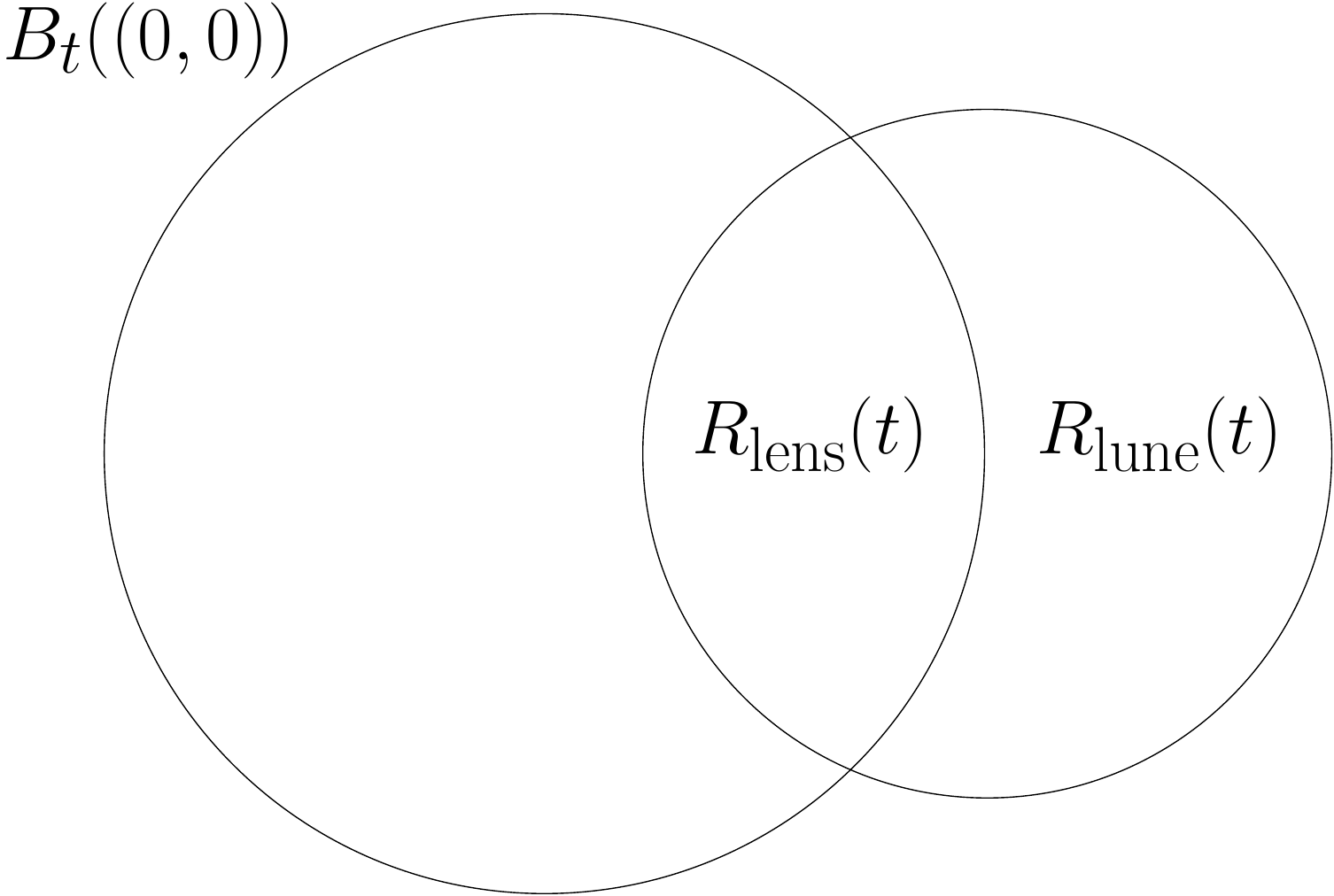}
\caption{\small A lens and a lune}
\label{ll}
\end{figure}

\begin{definition}\label{I and Q definitions}
For $t\in \mathbb{R}_{\geq 0}$, let $R_{\mathrm{lens}}(t)$ denote the asymmetric lens $B_{t}((0,0))\cap B_{1/\sqrt{\pi}}((t,0))$. Let also $R_{\mathrm{lune}}(t)$ denote the lune $B_{1/\sqrt{\pi}} ((t,0))\setminus B_{t}((0,0))$ (see Figure~\ref{ll} for an illustration).
Given functions $f,g: \ \mathbb{R}_{\geq 0}\rightarrow \mathbb{R}_{\geq 1}$, we define the quantities
\[I(f,g)(t):= \int_{\mathbf{x} \in R_{\mathrm{lens}}(t)} \left(p f(\| \mathbf{x}\|) +(1-p)g(\| \mathbf{x}\|)\right)d\mathbf{x} +  \int_{\mathbf{x} \in R_{\mathrm{lune}}(t)}pf(\| \mathbf{x}\|) d\mathbf{x},\]
\begin{align}
Q(f,g)(t):= \int_{\mathbf{x} \in \mathbb{R}^2: \|\mathbf{x}\|\leq t}  p&\left( f(\|\mathbf{x}\|) -1 - f(\|\mathbf{x}\|)\log [f(\|\mathbf{x}\|)]\right) \notag \\
&+ (1-p)\left(g(\|\mathbf{x})\|) -1 - g(\|\mathbf{x}\|)\log [g(\|\mathbf{x}\|)]    \right)d\mathbf{x},\label{eq: weight def}\end{align}
and $q(f,g):=\lim_{t\rightarrow \infty} Q(f,g)(t)$, where $\|\cdot\|$ denotes the standard Euclidean $\ell_2$-norm.
\end{definition}
Note that for $f,g\geq 1$, we have $f-1-f\log f \leq 0$ and $g-1-g\log g \leq 0$. Thus $Q(f,g)$ is a non-increasing function of $t$ and either converges to a limit or to $-\infty$.
\begin{definition}[Symmetric local growth threshold]\label{def: theta sym local growth}
Let $Q_{\mathrm{max}}= Q_{\mathrm{max}}(p, \theta)$ denote the supremum of $q(f,g)$ over all continuous functions $f,g: \ \mathbb{R}_{\geq 0}\rightarrow \mathbb{R}_{\geq 1}$ satisfying
\begin{align*}
I(f,g)(t) > \theta \quad \forall t\in \mathbb{R}_{\geq 0}.
\end{align*}
Further, for $a>1$ and $p\in (0,1)$ fixed, let $\theta_{\mathrm{local}}=\theta_{\mathrm{local}}(a,p)$ be the supremum of all $\theta \leq 1$ such that
\begin{align}\label{eq: condition to guarantee local growth}
Q_{\mathrm{\max}}(p, \theta)> -1/a.
\end{align}
\end{definition}
\noindent Observe that $\theta_{\mathrm{local}}$ depends only on the values of $a$ and $p$, and that it is well-defined and greater or equal to $p$, since $Q_{\mathrm{\max}}(p,p)=0$, as can be seen by taking $f$ and $g$ to be constant and equal to $1$.
\begin{proof}[Proof of Theorem~\ref{theorem: threshold for symmetric local growth}]
Since $\theta$ satisfies \eqref{eq: symmetric growth threshold}, it follows from the definitions of $\theta_{\mathrm{local}}$ and $Q_{\mathrm{max}}$ that for any fixed $T\in \mathbb{R}_{\geq 0}$ there exist functions $f,g: \mathbb{R}_{\geq 0}\rightarrow \mathbb{R}_{\geq 1}$ and a real $\varepsilon>0$ such that the following hold:
\begin{enumerate}
	\item for every $t\in [0,T]$, $I(f,g)(t) > \theta+2\varepsilon$,
	\item $Q(f,g)(T+2)> -1/a +2\varepsilon$.
\end{enumerate}
We use this information to construct for any constant $C>0$ a $(C+2)r$-bounded event occurring w.h.p.\ and guaranteeing the existence of a ball of radius $Cr$ that becomes wholly infected over the course of our bootstrap percolation process. As we note in Proposition~\ref{proposition: inequality for theta local} below, this implies that $\theta_{\mathrm{local}}\leq (1+p)/2$ and so that for $\theta<\theta_{\mathrm{local}}$ the growing condition~\eqref{eq: growing condition} is satisfied. Combining this information with Theorem~\ref{theorem: (1+p)/2 threshold}(i) then yields the desired almost percolation.

We now give the details. Pick $T=(C+2)/\sqrt{\pi}$. Fix $\varepsilon>0$ and let $K>0$ be a sufficiently large positive real number to be specified later. Partition $\mathbb{R}^2$ into a fine grid of interior-disjoint $\frac{\sqrt{a\log n}}{K}\times \frac{\sqrt{a\log n}}{K}$ square tiles with axis-parallel sides. Let $\mathcal{F}$ be the finite family of tiles that lie wholly contained inside the ball of radius $(C+2)r= T\sqrt{a\log n}$ about the origin. Note that $\vert \mathcal{F}\vert =O(1)$. Let $f,g$ be functions $f,g: \mathbb{R}_{\geq 0}\rightarrow \mathbb{R}_{\geq 1}$ such that 1.\ and 2.\ above are satisfied.

Let $\mathcal{P}_p$ and $\mathcal{P}_{1-p}$ be independent Poisson point processes on $B_{T}(\mathbf{0})$ with intensities $p$ and $1-p$ respectively.  Let $E$ be the event that for every tile $F$ in $\mathcal{F}$, $F$ contains
\begin{itemize}
	\item at least $N_f(F):=\int_{\mathbf{x} \in F}  p f(\| \mathbf{x}\|_{2})d\mathbf{x}$ points of $\mathcal{P}_p$, and
	\item at least $N_g(F)\int_{\mathbf{x} \in F}  (1-p) g(\| \mathbf{x}\|_{2})d\mathbf{x}$ points of $\mathcal{P}_{1-p}$.
\end{itemize}
\begin{claim}\label{claim: prob of local start event} For any $\varepsilon>0$ fixed, there exists $K_1>0$ such that picking $K\geq K_1$ ensures
 \[\mathbb{P}(E)\geq e^{-c\log n +o(\log n)} \qquad \textrm{ for some fixed constant }c<1.\]	
\end{claim}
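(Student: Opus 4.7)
The plan is to exploit the fact that $E$ is the intersection of $2\vert \mathcal{F}\vert =O(1)$ events, each depending only on the restriction of $\mathcal{P}_p$ or $\mathcal{P}_{1-p}$ to a single fine tile $F\in\mathcal{F}$. Since the tiles are pairwise interior-disjoint and the two Poisson processes are independent, all these events are jointly independent, so $\mathbb{P}(E)$ is a product of tile-by-tile probabilities. Each such factor will be estimated by Lemma~\ref{lemma: Paul}, and the logarithms of the factors will be recognised as a Riemann sum for the integral defining $Q(f,g)$.

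More precisely, fix $F\in\mathcal{F}$. In scaled coordinates (where the unit of length is $\sqrt{a\log n}$, so that the infection ball $B_r$ becomes $B_{1/\sqrt{\pi}}$), the tile $F$ is a $\frac{1}{K}\times\frac{1}{K}$ square, sitting at some point $\tilde{\mathbf{x}}_F\in B_T(\mathbf{0})$, and $\vert F\vert=\frac{a\log n}{K^2}$ in real units. Since $f$ is continuous on the compact set $B_T$, for any $\eta'>0$ we can pick $K_1=K_1(f,g,\eta')$ large enough that $\vert f(\|\mathbf{x}\|)-f(\|\tilde{\mathbf{x}}_F\|)\vert<\eta'$ for all $\mathbf{x}\in F$ (and similarly for $g$); in particular the averages $\bar f_F$, $\bar g_F$ of $f,g$ over $F$ differ from $f(\|\tilde{\mathbf{x}}_F\|)$, $g(\|\tilde{\mathbf{x}}_F\|)$ by at most $\eta'$. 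A Poisson process of intensity $p$ on $F$ is equivalent (in law of the count) to a Poisson process of intensity $1$ on a region of measure $p\vert F\vert$; since $\bar f_F\geq 1$, the probability of having at least $\lceil N_f(F)\rceil=\lceil \bar f_F\,p\vert F\vert\rceil$ points is at least the probability of having exactly $\lceil \bar f_F\,p\vert F\vert\rceil$ points, and Lemma~\ref{lemma: Paul} then gives
\[
\mathbb{P}\bigl(\vert \mathcal{P}_p\cap F\vert\geq N_f(F)\bigr)\geq\exp\Bigl\{p\bigl(\bar f_F-1-\bar f_F\log\bar f_F\bigr)\vert F\vert+O(\log\log n)\Bigr\}.
\]
An identical bound holds for $\mathcal{P}_{1-p}$ with $p$ replaced by $1-p$ and $f$ by $g$.

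Multiplying over all $F\in\mathcal{F}$ and both processes, and using that $\vert \mathcal{F}\vert=O(K^2)=O(1)$ so the cumulative Lemma~\ref{lemma: Paul} error term is $o(\log n)$, the log-probability of $E$ is at least
\[
a\log n\cdot\sum_{F\in\mathcal{F}}\tfrac{1}{K^2}\Bigl[p\bigl(\bar f_F-1-\bar f_F\log\bar f_F\bigr)+(1-p)\bigl(\bar g_F-1-\bar g_F\log\bar g_F\bigr)\Bigr]+o(\log n).
\]
The bracketed sum is, by the uniform approximation of $f,g$ on each tile and the continuity of the integrand, a Riemann sum for the integral $Q(f,g)(T)$ over $B_T(\mathbf{0})$ in scaled coordinates; taking $K_1$ large enough we can make it exceed $Q(f,g)(T)-\varepsilon$. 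Since $Q(f,g)$ is non-increasing in its argument, condition~2 gives $Q(f,g)(T)\geq Q(f,g)(T+2)>-1/a+2\varepsilon$, so the Riemann sum is at least $-1/a+\varepsilon$. Thus
\[
\log \mathbb{P}(E)\geq a\log n\cdot(-1/a+\varepsilon)+o(\log n)=-(1-a\varepsilon)\log n+o(\log n),
\]
yielding the claim with $c=1-a\varepsilon<1$.

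The main technical obstacle is the bookkeeping around two approximations that must simultaneously be made small: the pointwise approximation of $f,g$ by constants on each tile (controlled by continuity and large $K$) and the Riemann sum's approximation of the integral $Q(f,g)(T)$ (also controlled by large $K$). Both are handled by choosing $K_1$ sufficiently large \emph{before} letting $n\to\infty$, so that the $O(K^2\log\log n)$ cumulative error from Lemma~\ref{lemma: Paul} remains negligible compared to $\varepsilon\log n$. Once $K$ is fixed, the essential content is a single application of Lemma~\ref{lemma: Paul} per tile combined with independence; no deeper probabilistic input is required.
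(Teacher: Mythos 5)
Your proposal is correct and follows essentially the same route as the paper's proof: decompose $E$ into independent tile events, apply Lemma~\ref{lemma: Paul} to each tile, use uniform continuity of $f,g$ on the compact ball to control the per-tile approximation error for $K$ large, recognise the resulting sum as a Riemann sum for $Q(f,g)(T)$, and conclude via Assumption~2 (with the monotonicity of $Q$) that $c=1-a\varepsilon<1$. The only cosmetic difference is that you phrase the approximation in terms of tile averages $\bar f_F$ and an explicit Riemann-sum limit, whereas the paper works with the densities $\rho_f(F)$ and the uniform continuity of $\log f$, $\log g$ directly; these are equivalent since $f,g$ are bounded between $1$ and $M_2$.
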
	
\begin{proof}
Set $M_1$ to be the area of a $2$-dimensional ball of radius $T$.	 Since $f,g$ are continuous functions from the compact set $[0, T]$ to $\mathbb{R}_{\geq 1}$,  we have that $f$ and $g$ are bounded above by some $M_2>0$ on $[0,T]$ and further that $\log f$ and $\log g$ are uniformly continuous  over $[0,T]$. In particular, for every $\varepsilon>0$ there exists $K_1>0$ sufficiently large such that for all $K\geq K_1$, and all $x,y\in [0,T]$, $\vert x-y\vert \leq 2/K_1$ implies both of $\vert \log [f(x)]-\log[ f(y)]\vert$  and $\vert \log [g(x)]-\log[g(y)]\vert $ are less than $\varepsilon/M_1M_2$.

For each tile $F\in \mathcal{F}$, set $\rho_f(F):= N_f(F)/\vert F\vert$ and $\rho_g(F):= N_g(F)/\vert F\vert$. By the consequence of uniform continuity observed above, picking $K\geq K_1$ ensures that
\begin{align*}f(x)\left(\log [f(x)]- \log [\rho_f(F)]\right)\geq -\varepsilon /M_1\end{align*}
for all $x\in F$. Combining this with Lemma~\ref{lemma: Paul} (rescaled by a factor of $p$), the probability that $F$ contains at least $N_f(F)$ points of $\mathcal{P}_p$ is
\begin{align*}
\exp &\left\{ \left(\rho_f(F) -1- \rho_f(F)\log \left(\rho_f(F)\right) \right)p\vert F\vert +O(\log \log n) \right\}\\
&\geq \exp \left\{ \int_{\mathbf{x}\in F}p\left(f(\|\mathbf{x}\|) -1- f(\|\mathbf{x}\|)\log f(\|\mathbf{x}\|) \right)d\mathbf{x}  - p\frac{\varepsilon }{M_1}\vert F\vert +O(\log \log n) \right\}.
\end{align*}
One may obtain a similar expression  for the probability that $F$ contains at least $N_g(F)$ points of $\mathcal{P}_{1-p}$, substituting $1-p$ for $p$ and $g$ for $f$.

Since $\{\mathcal{P}_p \cap F, \ F\in \mathcal{F}\}$ and  $\{\mathcal{P}_{1-p} \cap F, \ F\in \mathcal{F}\}$ together form a collection of independent random variables, it follows that the probability that every tile $F\in \mathcal{F}$ contains at least $N_f(F)$ points of $\mathcal{P}_p $ and at least $N_g(F)$ points of $\mathcal{P}_{1-p}$ --- in other words, the probability of $E$ --- is at least
\begin{align*}
\mathbb{P}(E)&\geq  \exp \left\{ \left(Q(f,g)(T) -\varepsilon\right)a\log n +O(\log \log n) \right\},
   \end{align*}
which by Assumption 2.\ is  at least $e^{-(1-a\varepsilon)\log n +o(\log n)}$, proving our claim with the constant $c=1-a\varepsilon$.
\end{proof}
 Given an integer $i$, let $A_i=A_i(\mathcal{F})$ denote the collection of tiles of $\mathcal{F}$ that meet the annulus $B_{(i+1)\sqrt{a\log (n)}/K}(\mathbf{0})\setminus B_{{i}\sqrt{a\log (n)}/K}(\mathbf{0})$. Further, let $D_i=D_i(\mathcal{F})$ denote the union of the tiles of $\mathcal{F}$ that are wholly contained inside the disc $B_{{i}\sqrt{a\log (n)}/K}(\mathbf{0})$. Observe that both these tile families depend on $\mathcal{F}$ (and thus our choice of $K$).

\begin{claim} \label{claim: local growth infection spreads}
	For every $\varepsilon>0$ fixed, there exists $K_2>0$ such that picking $K\geq K_2$ ensures at least one of the following holds:
	\begin{itemize}
		\item the event $E$ fails to occur
		\item for every $i\leq KT-1$, every  tile $F\in A_i$ and every vertex $\mathbf{x}\in F$, we have that
\begin{align*}	
N_{\mathbf{x}}:=\vert B_{r}(\mathbf{x})\cap D_i\cap \mathcal{P}_{1-p}\vert + \vert  B_{r}(\mathbf{x})\cap \mathcal{P}_p\vert > \theta a \log n.
	\end{align*}
	\end{itemize}
\end{claim}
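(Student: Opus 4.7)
The plan is to show that whenever the event $E$ occurs, the tilewise point-count lower bounds supplied by $E$ force $N_{\mathbf{x}} > \theta a\log n$ for all choices of $i$, $F\in A_i$ and $\mathbf{x}\in F$. The argument is essentially a Riemann-sum comparison carried out tile by tile, matching $N_{\mathbf{x}}$ with $I(f,g)(t_\ast)\cdot a\log n$, where $t_\ast := \|\mathbf{x}\|/\sqrt{a\log n}$ is the rescaled distance of $\mathbf{x}$ from the origin; by Assumption~1 this target strictly exceeds $(\theta + 2\varepsilon)a\log n$.

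Fix $i \leq KT - 1$, $F\in A_i$ and $\mathbf{x}\in F$. Let $\mathcal{F}_1=\mathcal{F}_1(\mathbf{x},i)$ denote the tiles of $\mathcal{F}$ wholly contained in $B_r(\mathbf{x})\cap D_i$, and $\mathcal{F}_2=\mathcal{F}_2(\mathbf{x},i)$ those wholly contained in $B_r(\mathbf{x})\setminus D_i$. Since distinct tiles are interior-disjoint, applying $E$ tile by tile yields
\begin{align*}
N_{\mathbf{x}} \;\geq\; \sum_{F'\in\mathcal{F}_1}\bigl(N_f(F')+N_g(F')\bigr) + \sum_{F'\in\mathcal{F}_2}N_f(F') \;=\; \int_{U_1}\bigl[pf+(1-p)g\bigr]\,d\mathbf{y} + \int_{U_2}pf\,d\mathbf{y},
\end{align*}
where $U_j := \bigcup\mathcal{F}_j$ and $f,g$ are evaluated at $\|\mathbf{y}\|/\sqrt{a\log n}$. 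After rescaling coordinates by $\sqrt{a\log n}$ each tile has side $1/K$, the radius $r$ becomes $1/\sqrt{\pi}$, and the rescaled images $\tilde U_1,\tilde U_2$ differ from a rotation of $R_{\mathrm{lens}}(t_\ast)$ and $R_{\mathrm{lune}}(t_\ast)$, respectively, only by a set of area $O(1/K)$. This symmetric-difference bound accounts for the thin strips along $\partial B_r(\mathbf{x})$ and $\partial D_i$ cut off by tile boundaries, together with the mismatch $|t_\ast-i/K|\leq 1/K$ between the location of $\mathbf{x}$ and the inner radius of $D_i$.

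Since $f,g$ are continuous on the compact interval $[0,T]$, they are bounded above by some constant $M=M(f,g,T)$. Combining this bound with the symmetric-difference estimate and the Jacobian factor $a\log n$ arising from the rescaling, we obtain
\begin{align*}
N_{\mathbf{x}} \;\geq\; \bigl(I(f,g)(t_\ast) - C_0 M/K\bigr)\,a\log n \;>\; \bigl(\theta + 2\varepsilon - C_0 M/K\bigr)\,a\log n
\end{align*}
for some absolute constant $C_0>0$, where the second inequality invokes Assumption~1. Choosing $K_2$ large enough that $C_0 M/K_2 < \varepsilon$ then delivers $N_{\mathbf{x}} > (\theta+\varepsilon)a\log n > \theta a\log n$, as required. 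The main obstacle is to verify the $O(1/K)$ symmetric-difference estimate uniformly in $i$ and $\mathbf{x}$: one must simultaneously control three sources of boundary loss --- tiles straddling $\partial B_r(\mathbf{x})$, tiles straddling $\partial D_i$, and the annular mismatch between $B_{i/K}(\mathbf{0})$ and $B_{t_\ast}(\mathbf{0})$ --- and show that the combined error region has perimeter $O(1)$ and width $O(1/K)$ in rescaled coordinates, taking particular care for small $i$ where $D_i$ may be empty but the target $R_{\mathrm{lens}}(t_\ast)$ is already of area $O(1/K^2)$ and so the bound still holds.
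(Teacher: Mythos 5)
Your argument is correct and is essentially the paper's own proof: both decompose $B_r(\mathbf{x})$ into wholly-contained fine tiles, lower-bound $N_{\mathbf{x}}$ by the corresponding sums of $N_f$ and $N_g$, compare these Riemann sums to $I(f,g)(t_\ast)\,a\log n$ using the boundedness of $f,g$ on $[0,T]$ together with an $O(1/K)$ bound on the area of tiles meeting the relevant boundary curves (the paper invokes Proposition~\ref{prop: bound on curve-meeting tiles} for exactly the symmetric-difference control you describe), and then conclude via Assumption~1. The only cosmetic difference is that the paper counts $N_f$ over all tiles wholly inside $B_r(\mathbf{x})$ and adds $N_g$ only over those also inside $D_i$, whereas you split into lens and lune pieces; the two bookkeeping schemes agree up to the same $O(1/K)$ boundary error.
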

\begin{proof}
As in the previous claim, we note that the continuous functions $f,g$ are bounded above on the compact set $[0,T]$ by some $M_2>0$.

Assume the event $E$ occurs. Fix $i \leq KT-1$, and $F \in A_i$.  Let $\mathbf{x}\in F$ and set $t:= \|\mathbf{x}\|/\sqrt{a \log n}$. Denote by $S^1(\mathbf{x})$ the collection of tiles of $F$ wholly contained inside $D_i\cap B_{r}(\mathbf{x})$ and $S^2(\mathbf{x})$ the collection of tiles of $F$ wholly contained inside $B_r(\mathbf{x})$. Since $E$ occurs, we have
\begin{align*}
N_{\mathbf{x}}\geq \sum_{F\in S^2(\mathbf{x})} N_f(F) +\sum_{F\in S^1(\mathbf{x})}N_g(F).
\end{align*}
Now since $f,g$ are bounded above by $M_2$, the expression on the right hand side is at least
\begin{align*}
I(f, G)(t)a\log n  - M_2 p\left(a\log n   - \sum_{F\in S^2(\mathbf{x})}\vert F\vert\right) - M_2 (1-p)\left(\vert R_{\mathrm{lens}}\vert a\log n  -  \sum_{F\in S^1(\mathbf{x})}\vert F\vert\right).
\end{align*}
It readily follows from an application of Proposition~\ref{prop: bound on curve-meeting tiles} that for any $\varepsilon>0$, there exists $K_2>0$ such that picking $K\geq K_2$ ensures the above is at least $(I(f,g) - \varepsilon)a\log n$. Assumption 1.\ then tells us that
\begin{align*}
N_{\mathbf{x}}\geq (\theta+\varepsilon) a \log n,
	\end{align*}
	proving our claim.
\end{proof}
With Claims~\ref{claim: prob of local start event} and~\ref{claim: local growth infection spreads} in hand, the proof of Theorem~\ref{theorem: threshold for symmetric local growth} is now straightforward.

Now consider $(a, p, \theta)$ satisfying $a>1$ and~\eqref{eq: symmetric growth threshold}. As recorded in Proposition~\ref{proposition: inequality for theta local} below, this implies $\theta<(1+p)/2$. Let $C=C(a, \theta, p)>0$ be a constant such that by Theorem~\ref{theorem: (1+p)/2 threshold}(i), the infection of a ball of radius $Cr$ in $T_n$ leads w.h.p.\ to almost percolation in our bootstrap percolation process on $T_n$. This then specifies our choice of $T>0$.

Let $K_1, K_2$ be the constants from Claims~\ref{claim: prob of local start event}--\ref{claim: local growth infection spreads}. Pick $K\geq \max(K_1, K_2)$.  As remarked in Section~\ref{section: preliminaries}, by standard properties of Poisson point processes we may view the vertex set in the Bradonji\'c--Saniee model in the torus as being the union of a Poisson process $\mathcal{P}_p$ of initially infected points with intensity $p$  with a Poisson process $\mathcal{P}_{1-p}$ of initially uninfected points with intensity $1-p$.  Partitioning the torus $T_n$ into a fine grid of interior-disjoint square tiles with side-length $\sqrt{a \log n}/c'K$, where $c'\geq 1$ is chosen to ensure divisibility conditions are met, we can define a natural analogue $E'$ of our event $E$ inside $T_n$.

Clearly if $E'$ occurs, then by successive iterations of Claim~\ref{claim: local growth infection spreads}, all vertices inside a ball of radius $Cr$ about the origin in $T_n$ becomes infected, which then leads to almost percolation. Since $E'$ is $(C+2)r$-bounded and has probability $q=e^{-c\log n +o(\log n)}$ for some fixed constant $c<1$ (by Claim~\ref{claim: prob of local start event}), it follows from Lemma~\ref{lemma: first and second moment}(ii) that w.h.p.\ some translate of $E'$ occurs in $T_n$. Thus for $a>1$ fixed w.h.p.\ we have almost percolation in the $(p, \theta)$ regime satisfying~\eqref{eq: symmetric growth threshold}, as claimed.
\end{proof}
We conclude this subsection by recording some trivial inequalities between $\theta_{\mathrm{local}}$ and other quantities of interest, justifying the phase diagram we give in Figure~\ref{phase_d=2}.
\begin{proposition}\label{proposition: inequality for theta local}
		For all $a>1$ fixed and $p\in [0,1]$, the following inequalities hold:
		\[p\leq \theta_{\mathrm{local}} \leq \min \left(\frac{1+p}{2}, \theta_{\mathrm{start}}\right)\]
\end{proposition}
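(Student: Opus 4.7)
The three inequalities are proved separately. Throughout I set $h(u) := u - 1 - u\log u$, which is concave on $[1,\infty)$, non-positive, vanishes only at $u=1$, and is strictly decreasing on $(1,\infty)$. Since the integrand of $Q(f,g)(t)$ is $\leq 0$, the map $t \mapsto Q(f,g)(t)$ is non-increasing, so $q(f,g) \leq Q(f,g)(t_0)$ for every finite $t_0$.

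For $p \leq \theta_{\mathrm{local}}$, I take the constant trial pair $f \equiv g \equiv 1$. Using $R_{\mathrm{lens}}(t) \cup R_{\mathrm{lune}}(t) = B_{1/\sqrt{\pi}}((t,0))$ (a disc of area $1$), one finds $I(1,1)(t) = p + (1-p)|R_{\mathrm{lens}}(t)| \geq p$, so this pair is admissible for every $\theta < p$, with $q(1,1) = 0 > -1/a$. Letting $\theta \to p^-$ gives $\theta_{\mathrm{local}} \geq p$.

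For $\theta_{\mathrm{local}} \leq (1+p)/2$, I argue by contradiction: if $\theta > (1+p)/2$ and $(f,g)$ is admissible with $q(f,g) > -\infty$, then both $\int_{\R^2}(-h(f))$ and $\int_{\R^2}(-h(g))$ are finite. Writing $B^{(t)} := B_{1/\sqrt{\pi}}((t,0))$ (unit-area disc receding to infinity as $t\to\infty$), dominated convergence yields $\int_{B^{(t)}}(-h(f)),\int_{B^{(t)}}(-h(g)) \to 0$. Jensen's inequality applied to the convex function $-h$ on the unit-area disc $B^{(t)}$ then gives $-h(\bar{f}_t) \leq \int_{B^{(t)}}(-h(f)) \to 0$ (where $\bar f_t$ denotes the $B^{(t)}$-average of $f$), so $\bar f_t \to 1$ since $-h$ is continuous with unique zero at $1$; similarly $\bar g_t \to 1$. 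Decomposing
\[
I(f,g)(t) = p\,\bar{f}_t + (1-p)\int_{R_{\mathrm{lens}}(t)} g\, d\mathbf{x},
\]
using $g \geq 1$ to bound the second integral by $|R_{\mathrm{lens}}(t)| + (\bar{g}_t - 1)$, and the geometric fact $|R_{\mathrm{lens}}(t)| \leq 1/2$ (the arc $\partial B_t(\mathbf{0})$ passes through the center $(t,0)$ of $B^{(t)}$ and curves toward the origin, so the lens lies in the near-origin half of $B^{(t)}$), I conclude $\limsup_{t\to\infty} I(f,g)(t) \leq (1+p)/2 < \theta$, contradicting $I(f,g)(t) > \theta$ for every $t \geq 0$.

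For $\theta_{\mathrm{local}} \leq \theta_{\mathrm{start}}$, since $\theta_{\mathrm{start}} \geq p$ it suffices to handle $\theta > p$. The decisive constraint comes at $t = 0$: with $R_{\mathrm{lens}}(0)$ null and $R_{\mathrm{lune}}(0) = B_{1/\sqrt{\pi}}(\mathbf{0})$, the admissibility condition $I(f,g)(0) > \theta$ reduces to $\bar{f} > \theta/p$ on this unit-area disc. Jensen's inequality for concave $h$, combined with strict monotonicity of $h$ on $[1,\infty)$ (and $\theta/p > 1$), yields
\[
\int_{B_{1/\sqrt{\pi}}(\mathbf{0})} p\,h(f)\,d\mathbf{x} \leq p\,h(\bar f) \leq p\,h(\theta/p) = -\tfrac{1}{a}\,f_{\mathrm{start}}(a,p,\theta).
\]
Discarding the non-positive $(1-p)h(g)$ term then gives $q(f,g) \leq Q(f,g)(1/\sqrt{\pi}) \leq -f_{\mathrm{start}}(a,p,\theta)/a$. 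Hence $q(f,g) > -1/a$ forces $f_{\mathrm{start}}(a,p,\theta) < 1$, i.e.\ $\theta < \theta_{\mathrm{start}}$; taking the supremum over admissible $\theta$ gives $\theta_{\mathrm{local}} \leq \theta_{\mathrm{start}}$.

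The technical heart of the argument is step (ii): upgrading the $L^1$-integrability of $-h(f), -h(g)$ over $\R^2$ to the convergence $\bar{f}_t, \bar{g}_t \to 1$ on the receding unit discs $B^{(t)}$, which is what pins down the limiting asymmetric lens-versus-lune value $(1+p)/2$ of $I(f,g)(t)$. The other two steps reduce, after small calculations, to a straightforward application of Jensen's inequality on the unit-area disc.
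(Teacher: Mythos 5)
Your proof is correct, and it takes a genuinely different route from the paper's. The paper derives both upper bounds as \emph{probabilistic corollaries} of the main theorems: for $\theta_{\mathrm{local}}\leq(1+p)/2$ it observes that the fully-infected ball produced inside the proof of Theorem~\ref{theorem: threshold for symmetric local growth} (for $\theta<\theta_{\mathrm{local}}$) is incompatible with the containment statement of Theorem~\ref{theorem: (1+p)/2 threshold}(ii) (for $\theta>(1+p)/2$), and for $\theta_{\mathrm{local}}\leq\theta_{\mathrm{start}}$ it plays the no-percolation regime of Proposition~\ref{prop: start} against the almost-percolation conclusion of Theorem~\ref{theorem: threshold for symmetric local growth}. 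You instead attack the variational problem defining $\theta_{\mathrm{local}}$ directly, with no probability at all: the $\theta_{\mathrm{start}}$ bound falls out of a single application of Jensen's inequality on the unit disc at $t=0$ together with the identity $ph(\theta/p)=-f_{\mathrm{start}}/a$, while the $(1+p)/2$ bound comes from noting that $q(f,g)>-\infty$ forces $\bar f_t,\bar g_t\to 1$ on the receding unit discs (tail estimate plus Jensen for convex $-h$), at which point $\limsup_t I(f,g)(t)\leq p+(1-p)\cdot\tfrac12$ via the geometric fact $|R_{\mathrm{lens}}(t)|\leq\tfrac12$. Beyond being self-contained, your approach has a structural virtue: it makes this proposition logically prior to Theorem~\ref{theorem: threshold for symmetric local growth}, whereas in the paper the proof of that theorem cites the proposition (to verify the growing condition before applying Theorem~\ref{theorem: (1+p)/2 threshold}(i)) and the proof of the proposition cites a step inside the proof of the theorem --- an interleaving that works, but requires care to confirm it is not circular. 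One small phrasing point: where you invoke ``dominated convergence'' to get $\int_{B^{(t)}}(-h(f))\to 0$, what you are really using is that the tail of a convergent integral vanishes, since $B^{(t)}\subset\{\|\mathbf{x}\|\geq t-1/\sqrt{\pi}\}$; stating it that way would be cleaner.
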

\begin{proof}
For the lower bound, observe that taking $f,g$ to be identically $1$ we have $I(f,g)\geq p$ for all $t\geq 0$ and $q(f,g)=0$. It then follows from, Definition~\ref{def: theta sym local growth} that $\theta_{\mathrm{local}}\geq p$ as claimed.

For the upper bound, observe first of all that if $p,\theta$ are fixed and $\theta >\theta_{\mathrm{start}}$ then w.h.p.\ no percolation occurs, which by Theorem~\ref{theorem: threshold for symmetric local growth} implies $\theta_{\mathrm{start}}\geq \theta_{\mathrm{local}}$. Next, note that the proof of Theorem~\ref{theorem: threshold for symmetric local growth} implies that for $a,p, \theta$ fixed with $a>1$ and $\theta <\theta_{\mathrm{local}}$, for any fixed $C>0$ w.h.p.\ there will be a ball of radius $C\sqrt{\log n}$ in $T_n$ such that every vertex inside it eventually becomes infected. On the other hand, Theorem~\ref{theorem: (1+p)/2 threshold}(ii) implies that if $(a,p, \theta)$ are fixed with $a>1$ and $\theta >\frac{1+p}{2}$, then w.h.p.\ there exists a constant $C>0$ such that no ball of radius $C\sqrt{\log n}$ in $T_n$ becomes infected in this way. This immediately yields that $\theta_{\mathrm{local}}\leq\frac{1+p}{2}$.
	\end{proof}
\begin{remark}\label{remark: derivatives of local curve}
The results in Appendix C show that $\sfrac{d}{dp}\left(\theta_{\mathrm{local}}(p)\right)\to\infty$ as $p\to 0$ and $\sfrac{d}{dp}\left(\theta_{\mathrm{local}}(p)\right)\to 1/2$ as $p\to 1$, so that $\theta_{\mathrm{local}}$ is tangent to $\theta_{\mathrm{start}}$ at $p=0$ and to $\theta=\frac{1+p}{2}$ at $p=1$. In particular the tangencies at $p=0$ and $p=1$ of the curve $\theta=\theta_{\mathrm{local}}(p)$ given in Figure~\ref{phase_d=2} are correct.
\end{remark}

		\subsection{Islands}\label{subsection: islands}
Beyond the almost-percolation guaranteed by Theorem~\ref{theorem: threshold for symmetric local growth}, it is natural to ask when full percolation occurs in the Bradonji\'c--Saniee model. One obstacle for full percolation is the presence of initially uninfected vertices of degree strictly less than $\theta a \log n$. Proposition~\ref{proposition: 0-stop} gives us the (implicit) $\theta$ threshold for the w.h.p.\ disappearance of such vertices. However there may be other, likelier, obstacles to full percolation, namely `islands' of initially uninfected vertices with few initially infected vertices and surrounded by sparsely populated `lakes' containing unusually few vertices.

Much as in the previous subsection, we are unable to rigorously determine the threshold for the w.h.p.\ disappearance of islands of uninfected vertices. We are however able to determine the threshold for the disappearance of symmetric islands, which is given by the solution of an explicit continuous optimisation problem and can be determined explicitly using the method of Lagrange multipliers.

As the arguments involved are very similar to those in the previous section, we give only a minimal level of detail. The basic idea is quite simple: we look for a radially symmetric distribution of infected/non-infected point densities which guarantees that some island cannot be infected from the outside. To such a distribution we associate a local $O(r)$ bounded event, from which we can in turn derive a threshold for the w.h.p.\ disappearance of such islands --- the proof essentially follows that of Theorem~\ref{theorem: threshold for symmetric local growth}, mutatis mutandis.

To make this more precise, let us give analogues of Definition~\ref{I and Q definitions} tailored to the island (rather than local outbreak) setting.
\begin{definition}
	Let $T>0$ be fixed.  For $t\in [0,T]$, write $R_{\mathrm{inner}}(t)$ for the asymmetric lens $B_{T}((0,0))\cap B_{1/\sqrt{\pi}}((t,0))$ and $R_{\mathrm{outer}}(t)$ for the (possibly empty) lune $B_{1/\sqrt{\pi}}((t,0))\setminus B_{T}((0,0))$.  A \emph{symmetric $T$-island distribution} is a pair $(f,g)$ of continuous functions $f,g: \ \mathbb{R}_{\geq 0}\rightarrow [0,1]$ such that for every $t\in [0,T]$, the following holds:
	\begin{align}\label{eq: island integral}
	\int_{\mathbf{x}\in R_{\mathrm{inner}}(t)} pf(\|\mathbf{x}\|) d_\mathbf{x} + 	\int_{\mathbf{x}\in R_{\mathrm{outer}}(t)} \left(pf(\|\mathbf{x}\|)+(1-p)g(\|\mathbf{x}\|)\right) d_\mathbf{x} < \theta.
	\end{align}
\end{definition}
\noindent Given a symmetric $T$-island distribution $(f,g)$, we define its \emph{weight} $q(f,g)$ as in~\eqref{eq: weight def}.
\begin{definition}[Symmetric islands threshold]
For every $T\geq 0$, the optimal $T$-island weight $q_{\mathrm{\max}}(T)=q_{\mathrm{\max}}(T)(p, \theta) $ is the supremum of the weight $q(f,g)$ over all symmetric $T$-island distributions $(f,g)$.  The \emph{symmetric islands threshold} $\theta_{\mathrm{islands}}=\theta_{\mathrm{islands}}(a,p)$ is then defined to be the supremum of the  $\theta \leq 1$ such that
\begin{align}\label{eq: theta islands def}
\sup_{T\geq 0} q_{\mathrm{max}}(T)>-1/a.
\end{align}
\end{definition}
\noindent We note that the quantity $\theta_{\mathrm{islands}}$ can in principle be computed using Euler--Lagrange equations --- see Appendix A. However the solution will be implicit rather than explicit. Let us record here however the simple fact that
\begin{align}\label{inequality for theta island}
\theta_{\mathrm{islands}}\leq \frac{1+p}{2}.
\end{align}
Indeed let $(a,p)$ and $\varepsilon>0$ be fixed.  Pick a constant $T>0$ sufficiently large such that  the area of $R_{\mathrm{inner}}(T)$ is at least $\frac{1}{2}-\varepsilon$. Then it is easily checked that taking the functions $f,g$ to be identically $1$ gives a symmetric $T$-island distribution $(f,g)$ with weight $q(f,g)=0>-1/a$ for any $\theta> \frac{1+p}{2}+\varepsilon (1-p)$. Inequality~\ref{inequality for theta island} follows immediately.
\begin{proof}[Proof of Theorem~\ref{theorem: islands}]
This is a simple modification of the proof of Theorem~\ref{theorem: threshold for symmetric local growth}: if $\theta> \theta_{\mathrm{islands}}$, then there exists a symmetric $T$-island distribution $(f,g)$ such that $q(f,g)>-1/a+2\varepsilon$. As in the proof of Theorem~\ref{theorem: threshold for symmetric local growth}, passing to a fine tiling of a ball of radius $(T+2)\sqrt{a\log n}$, one may use $(f,g)$ to define a tiling event $E$  in $T_n^2$ such that (i) $E$ is an $O(r)$-bounded event with probability $\mathbb{P}(E)= e^{-(c+o(1))\log n}$, where $c<1$ is a fixed constant, and (ii) if $E$ occurs then none of the points in a ball of radius $T\sqrt{a\log n}$ around the origin that are initially uninfected ever become infected in our bootstrap percolation process. Applying Lemma~\ref{lemma: first and second moment}(ii), we have that w.h.p.\ some translate of $E$ occurs, whence w.h.p.\ we do not have full percolation in this regime. We leave the details to the reader.
\end{proof}

The content of Conjecture~\ref{conjecture: islands} is that the last obstruction to full percolation that will vanish as we decrease the infection threshold $\theta$ will correspond to local distribution of initially infected/uninfected point that  are well-approximated by a symmetric $T$-island, for some $T=T(a,p)$. Motivation for the conjectured circular symmetry comes from the isoperimetric inequality in the plane as well as probabilistic considerations: for an uninfected island configuration to remain both uninfected and likely to occur in $T_n^2$, one expects it is best to minimise the length of its boundary, and to spread out unlikely low densities of points outside the island and of initially uninfected points inside the island as uniformly as possible.

Theorem~\ref{theorem: islands} gives an upper bound on the values of $\theta$ for which full percolation may occur w.h.p.\  in the Bradonji\'c--Saniee model. While we are unable to prove a matching lower bound and thereby prove Conjecture~\ref{conjecture: islands}, we note here that one can nevertheless prove some rigorous lower bounds on the threshold $\theta_{\mathrm{percolation}}(a,p)$ below which percolation occurs w.h.p., which we believe refine the earlier simple bounds due to Bradonji\'c--Saniee~\cite[Theorem 2]{BradonjicSaniee14} (in both cases, the exact value of the threshold is given implicitly rather than explicitly, which makes a comparison difficult).

Theorem~\ref{theorem: (1+p)/2 threshold}(ii) and~\ref{theorem: threshold for symmetric local growth} imply that once $\theta$ falls below $\theta_{\mathrm{local}}$, w.h.p.\ the only obstructions to full percolation are components of uninfected vertices of Euclidean diameter $O(\sqrt{\log n})$ in $T_n^2$. Let us consider what point configurations make the existence of such components possible. Given a component of never-infected points of diameter $C\sqrt{\log n}$, for some $C>0$, consider a pair of uninfected points $\mathbf{u}$, $\mathbf{v}$ from that component with $\|\mathbf{u}-\mathbf{v}\|=C\sqrt{\log n}$. Then the whole component of never-infected points lies inside the lune $L$ formed by the two discs of radius $C\sqrt{\log n}$ centred at $\mathbf{u}$ and $\mathbf{v}$ respectively. Since $\mathbf{u}$, $\mathbf{v}$  do not become infected, it must be the case that the number of points in $B_r(\mathbf{u})\setminus L$ plus the number of initially uninfected points in $B_r(\mathbf{u})\cap L$ is less than $\theta a \log n$, and a similar statement holds for $\mathbf{v}$. This implies that either $\left(B_r(\mathbf{u})\cup B_r(\mathbf{v})\right)\setminus L$ contains an abnormally low number of points from our Poisson point process, or that $\left(B_r(\mathbf{u})\cup B_r(\mathbf{v})\right)\cap L$ contains an abnormally low number of initially infected points. By performing a case analysis and some Lagrangian optimisation, one can upper-bound the probability of such an event.  Once this upper bound becomes $o(1/n)$, one can then apply Markov's inequality to show that such unlikely point configurations w.h.p.\ do not occur, and thus that we have w.h.p.\ entered the full percolation regime.  However we do not believe the bounds coming from this argument are optimal (since they will not match those from Conjecture~\ref{conjecture: islands}) or particularly helpful, so we relegate a sketch of the aforementioned case analysis and optimisation to Appendix B.

\section{Bootstrap percolation on the circle}\label{section: 1d}
In this section, we discuss the behaviour of the Bradonji\'c--Saniee model in dimension $d=1$, i.e.\ bootstrap percolation on a Gilbert random geometric graph on the circle. In the regime we are considering, this means we have a Poisson process of intensity $1$ on a circle of circumference $n$ providing us with the vertex-set of a Gilbert geometric graph with parameter $r$ given by $2r:=a\log n$, where $a>1$ is fixed.

As our main interest in this (already long) paper is the behaviour of the Bradonji\'c--Saniee model in the torus, we give a more informal, high-level discussion of the behaviour of the $1$-dimensional case, noting that all our arguments can be made fully rigorous using fine tiling arguments as in Section~\ref{section: 2d}.

The $1$-dimensional case reveals a broadly similar picture  to what we conjecture holds in the $2$-dimensional case, with two exceptions: (i) starting an infection is enough to ensure a large local outbreak, doing away with the necessity of computing an analogue of $\theta_{\mathrm{local}}$, and (ii)  the existence of \emph{blocking sets} (see below) that can stop a large local outbreak from becoming global, since in $1$-dimension you cannot `go round an obstacle'.  A pleasant feature of the $1$-dimensional case, however, is that the optimisation problems involved are far simpler and can be resolved explicitly using the method of Lagrange multipliers.

We break up our discussion of the $1$-dimensional model below in subsections on starting and growing an infection, blocking sets, and islands, before summarising the behaviour in the various regimes thus identified in Section~\ref{section: 1d summary} with a picture. Throughout this section, $(a,p,\theta)$ is fixed with $a>1$ and $0<p<\theta <1$.

\subsection{Growing an infection}

Our first observation is that when $f_{\rm start}(a,p,\theta)<1$, then, for some $\varepsilon$,
some vertex $v$ (at position 0, say) will see at least $a(\theta+\varepsilon)\log n$ infected neighbours, rather than just $a\theta\log n$.
This follows from the continuity of the function $f_{\rm start}$. We may also assume that both the infected and uninfected vertices in
$I_{100r}=(-100r,100r)$ are distributed uniformly, so that any interval $J$ of length $c\sqrt{\log n}$ inside $I_r$ contains
$|J|(1-p)(1+o(1))$ initially uninfected vertices and $|J|(\theta+\varepsilon)(1+o(1))$ initially infected ones. (The densities $1-p$
and $\theta+\varepsilon$ come from the colouring theorem for Poisson processes~\cite{Kingman93}, which implies in this case that the initially
infected and uninfected points can be regarded as {\it independent} Poisson processes of intensities $p$ and $1-p$ respectively. Meanwhile,
the uniformity assumption can be verified using Lemma~\ref{lemma: Paul}; the claimed uniformity fails with probability at most
$\log n\exp(-c'\sqrt{\log n})\to 0$, at any given place where the infection starts.)

One consequence of this is that not only does $v$ become infected (if it was not already infected), but that all vertices in the interval
$(-\delta\log n,\delta\log n)$ become infected, for some $\delta=\delta(\varepsilon)$. Using the notation $I_x=(-x,x)$, we see that, after
the first round of new infections, the interval $I_{\delta\log n}$ contains $|I_{\delta\log n}|(1-p+\theta+\varepsilon)(1+o(1))$ vertices,
all of which are infected, that $I_r\setminus I_{\delta\log n}$ contains $|I_r\setminus I_{\delta\log n}|(1-p)(1+o(1))$ uninfected vertices
and $|I_r\setminus I_{\delta\log n}|(\theta+\varepsilon)(1+o(1))$ infected ones, and that w.h.p.\ $I_{100r}\setminus I_r$ contains infected and
uninfected vertices with approximate densities $p$ and $1-p$ respectively.

Now we show that w.h.p.\ the `infected interval' $I_{\delta\log n}$ grows until it infects every vertex in $I_{100r}$. For simplicity, we
shall say that the interval $I_x$ is infected if every vertex within it is infected. Suppose that, after some rounds of the bootstrap
process, $I_x$ is in fact infected. (Initially, we may take $x=\delta\log n$.) We examine the neighbours of the first uninfected vertex $u$
to the right of $I_x$, with a view to showing that $u$ gets infected next, by virtue of having many infected neighbours in $I_x$. An identical
argument will apply on the left.

Assume first that $x\le r/2$. Then w.h.p.\ $u$ will have $N_1(x)(1+o(1))$ infected neighbours, where
\begin{align*}
N_1(x)&=(\theta+\varepsilon)(2r-3x)+xp+2x(1-p+\theta+\varepsilon)\\
&=2r\theta+\varepsilon(2r-x)+x(2-\theta-p)\\
&>2r\theta=a\theta\log n,
\end{align*}
so that $u$ does indeed become infected w.h.p..

Next assume that $r/2<x\le r$, and write $y=x-r/2$. This time w.h.p\ $u$ will have $N_2(y)(1+o(1))$ infected neighbours, where
\begin{align*}
N_2(y)&=(1-p+\theta+\varepsilon)r+\left(\frac{r}{2}-y\right)(\theta+\varepsilon)+\left(\frac{r}{2}+y\right)p\\
&=\frac{(2-p+3\theta+3\varepsilon)r}{2}+y(p-\theta-\varepsilon)\\
&>2r\theta=a\theta\log n.
\end{align*}
Now since $N_2(y)$ is a linear function satisfying
\[
N_2(0)=\frac{(2-p+3\theta+3\varepsilon)r}{2}>\frac{(1+3\theta)r}{2}>2r\theta
\]
and
\[
N_2\left(\frac{r}{2}\right)=\frac{(2+2\theta+2\varepsilon)r}{2}>2r\theta,
\]
it follows that $N_2(y)>2r\theta$ and hence that $u$ becomes infected w.h.p..

Finally we show that the infection spreads beyond $I_r$, to at least the entire interval $I_{100r}$. When $r<x\le 100r$, $u$ will have
at least $(r+rp)(1+o(1))$ infected neighbours (this will be an underestimate if $x<2r$), and this exceeds $2r\theta$ when $2\theta<1+p$.
Accordingly, we name the inequality $2\theta<1+p$ (which is already familiar to us from Theorem~\ref{theorem: (1+p)/2 threshold})
the {\it global growth condition} and display it for convenience.
\[
\boxed{{\rm\bf Global\ growth\ condition}\ \ \theta<\frac{1+p}{2}}
\]
We also have the condition from Proposition~\ref{prop: start} for the infection to start.
\[
\boxed{{\rm\bf Starting\ condition}\ \ a(p-\theta+\theta\log(\theta/p))<1\ \ {\rm or\ }\theta<p}
\]
Next, we describe the spread of the infection beyond $I_{100r}$. As a consequence of Lemma~\ref{lemma: first and second moment}(ii), if the starting condition is met, then the infection
will actually start in $n^{\alpha+o(1)}$ sites, where $\alpha= 1-f_{\mathrm{start}}(a,p,\theta)$. If the global growth condition is also met, these local infections will grow
on both sides from each such site, until one of them is met by a (left or right) {\it blocking set}, which we describe below.

\subsection{Blocking sets}
In this subsection, we assume that both the starting and the global growth conditions are both satisfied. What can then stop an infection from spreading around the circle?

A {\it clockwise}, or {\it right}, {\it blocking set} consists of two contiguous intervals, both of length $r$. The left interval contains
$zr$ vertices, and the right interval contains $xpr$ initially infected vertices. Even if all the vertices in the left interval become
infected, the infection will not spread to the right interval (or, more precisely, clockwise), as long as
\[
(z+xp)r<a\theta\log n=2r\theta,
\]
i.e.\  as long as $xp+z<2\theta$. We wish to maximize the probability that such a set occurs. Using Lemma~\ref{lemma: Paul}, we see that the probability $q=q(x,z)$ of a blocking set with
parameters $x$ and $z$ is given by
\begin{align*}
q&=\exp\{r(z-1-z\log z)+pr(x-1-x\log x)+o(r)\}\\
&=\exp\left\{\frac{a\log n}{2}\left((z-1-z\log z)+p(x-1-x\log x)+o(1)\right)\right\}.
\end{align*}
Consequently, to maximize the probability that a blocking set occurs, we must maximize
\[
f(x,y,z)=z-1-z\log z+p(x-1-x\log x)
\]
subject to the constraint
\[
g(x,z)=xp+z\leq 2\theta,
\]
with $p$ and $\theta$ fixed. Using the method of Lagrange multipliers, we see that $f(x,z)$ is maximized when
\[
(x,z)=\left(\frac{2\theta}{1+p},\frac{2\theta}{1+p}\right),
\]
and, since the growing condition is satisfied, we note that $x=z<1$ at the maximum.
Substituting, the maximum $f_{\rm max}$ of $f(x,z)$ subject to $g(x,z)\leq 2\theta$ is given by
\[
f_{\rm max}=2\theta-(1+p)-2\theta\log\left(\frac{2\theta}{1+p}\right),
\]
so that the maximum of $q(x,z)$ is
\[
q_{\rm max}=\exp\left\{-a\log n\left(\frac{1+p}{2}-\theta+\theta\log\left(\frac{2\theta}{1+p}\right)+o(1)\right)\right\}.
\]
We conclude that the expected number of blocking sets is $n^{\beta+o(1)}$, where
\[
\beta=1-a\left(\frac{1+p}{2}-\theta+\theta\log\left(\frac{2\theta}{1+p}\right)\right).
\]
Recall also that, as long as $\theta>p$, the infection starts in $n^{\alpha+o(1)}$ places, where
\[
\alpha=1-f_{\mathrm{start}}(a,p,\theta)=1-a(p-\theta+\theta\log(\theta/p)).
\]

Now assume that the starting and growing conditions are satisfied, and consider the case $0<\alpha<\beta<1$. Infections will
start to spread in $n^{\alpha+o(1)}$ places, and these will be blocked by $n^{\beta+o(1)}\gg n^{\alpha+o(1)}$ blocking
sets (a right blocking set stops infections spreading clockwise, and a left blocking set stops infections spreading
anticlockwise). Accordingly, each of the $n^{\alpha+o(1)}$ growing infections will spread for distance $n^{1-\beta+o(1)}$
before being blocked in both the clockwise and the anticlockwise direction, so that the infections will cover $n^{1+\alpha-\beta+o(1)}=o(n)$ of the circle. We call this the regime of `polynomial growth'; in this regime, the spread of infection is largely contained.

Next assume again that the starting and growing conditions are satisfied, but suppose that $0<\alpha/2<\beta<\alpha<1$. The
$n^{\alpha+o(1)}$ spreading infections will encounter $n^{\beta+o(1)}\ll n^{\alpha+o(1)}$ blocking sets, and, since
these blocking sets will usually be isolated, with spreading infections on each side, most of the circle will become infected.
The only obstructions (save for the `islands' to be described in the next subsection) are pairs of blocking sets (reading clockwise,
a right blocking set followed by a left blocking set) with no growing infection between them. There will be $n^{\beta}\cdot
n^{\beta-\alpha+o(1)}=n^{2\beta-\alpha+o(1)}$ such pairs, typically separated by distance $n^{1+\alpha-2\beta+o(1)}$, so that the entire
circle except for regions of length totalling $n^{1-2\beta+\alpha+o(1)}=o(n)$ will become infected. We call this the regime of
`polynomial obstructions': when $2\beta>\alpha$, blocking sets by themselves cannot prevent the spreading infections from covering
most of the circle.

There are thus two thresholds; the first, separating the regime of polynomial growth from that of polynomial obstructions, occurring
at $\alpha=\beta$, and the second, separating the regime of polynomial obstructions from that of `logarithmic obstructions' (caused
by {\it islands} -- see the subsection below), occurring at $\alpha=2\beta$.

When finding these thresholds, we recall that they lie entirely inside the region $\theta\ge p$, since $\alpha=1$ when $\theta\le p$.
For the first threshold, we have $\alpha>\beta$ exactly when
\[
\frac{1+p}{2}-\theta+\theta\log\left(\frac{2\theta}{1+p}\right)>p-\theta+\theta\log(\theta/p),
\]
which yields the following expression for the threshold.
\[
\boxed{{\rm\bf First\ threshold \ condition}\ \ \frac{1-p}{2}>\theta\log\left(\frac{1+p}{2p}\right)}
\]
Note that this region contains the region $\theta\le p$ (as can be seen from the Taylor expansion of the right hand side), and is independent of the choice of $a>1$. For the second threshold, we have $\alpha>2\beta$ exactly when
\[
1-a(p-\theta+\theta\log(\theta/p))>2\left(1-a\left(\frac{1+p}{2}-\theta+\theta\log\left(\frac{2\theta}{1+p}\right)\right)\right),
\]
which yields the following expression for the threshold.
\[
\boxed{{\rm\bf Second\ threshold \ condition}\ \ a\left(1-\theta+\theta\log\left(\frac{4\theta p}{(1+p)^2}\right)\right)>1\ \ {\rm or\ }\theta<p}
\]
This time, we need to include the additional condition $\theta<p$.

\subsection{Islands}
In this subsection, we assume that the starting, global growth, first and second threshold conditions are all satisfied. What can then prevent full percolation?

A $cr$-{\it island}, illustrated in Figure~\ref{blockingset}, consists of five contiguous intervals, whose lengths, from left to right, are
$cr,(1-c)r,cr,(1-c)r$ and $cr$. Inside these intervals, there are, from left to right, $zcr$ vertices, $y(1-c)r$ vertices,
$xpcr$ initially infected vertices, $y(1-c)r$ vertices and $zcr$ vertices. We note that even if all the vertices in the four outermost
intervals become infected, no vertices inside the middle interval will become infected, as long as
\[
cz+2y(1-c)+xpc<2\theta.
\]
We wish to maximize the probability that such a set occurs.
\begin{remark}\label{remark: symmetric distrib islands}
	Formally, we should consider islands where the proportion of vertices and infected vertices varies across the five intervals, i.e.\ where these contain $z_1cr$ vertices, $y_1(1-c)r$ vertices, $xpcr$ initially infected vertices, $y_2cr$ vertices and $z_2cr$ vertices respectively, and where we have two constraints, namely  $cz_1+(y_1+y_2)(1-c)+xpc<2\theta$ and $cz_2+(y_1+y_2)(1-c)+ xpc<2\theta$. However, it is easily checked that the probability of such configurations is maximised by symmetric configurations with $y_1=y_2$ and $z_1=z_2=z$, so that the $cr$-islands we consider are the only ones we need to worry about.
\end{remark}
Using Lemma~\ref{lemma: Paul}, we see that the probability $q=q(x,y,z,c)$ of an
island with parameters $x,y,z$ and $c$ is given by
\begin{align*}
q&=\exp\{2cr(z-1-z\log z)+2(1-c)r(y-1-y\log y)+pcr(x-1-x\log x)+o(r)\}\\
&=\exp\left\{\frac{a\log n}{2}\left(2c(z-1-z\log z)+2(1-c)(y-1-y\log y)+pc(x-1-x\log x)+o(1)\right)\right\}.
\end{align*}
Consequently, to maximize the probability that an island occurs, we must maximize
\[
f(x,y,z,c)=2c(z-1-z\log z)+2(1-c)(y-1-y\log y)+pc(x-1-x\log x)
\]
subject to the constraints
\[
g(x,y,z,c)=cz+2y(1-c)+xpc\leq2\theta,
\]
and $c\in [0,1]$ (for the island to exist and not to be a blocking set), with $p$ and $\theta$ fixed.

\begin{figure}[htp!]
\centering
\includegraphics[width=0.8\linewidth, trim = 0cm 0cm 0cm 4.5cm]{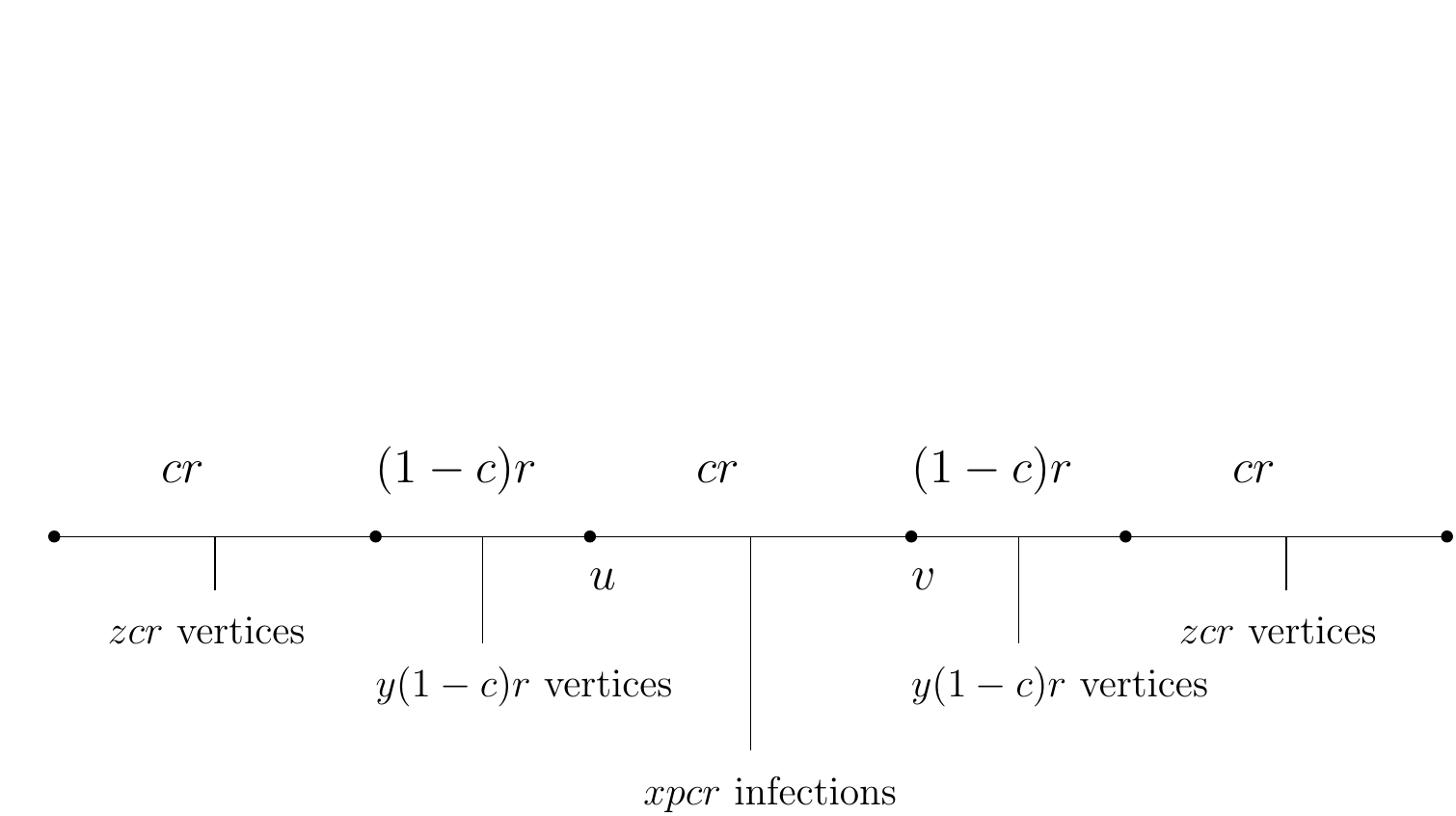}
\caption{\small An island}
\label{blockingset}
\end{figure}

Using the method of Lagrange multipliers, and noting that the growing condition is satisfied (which excludes the solution $x=y=z=1$),
we see that $f(x,y,z,c)$ is maximized when
\[
(x,y,z,c)=\left(\left(\frac{p}{2-p}\right)^2,\left(\frac{p}{2-p}\right)^2,\frac{p}{2-p},\frac{2(\theta(2-p)^2-p^2)}{p(1-p)(2-p)}\right),
\]
where we also require that $c\in[0,1]$ (for the island to exist). We separate the analysis into three cases, depending on the value of $c_{\star}=\frac{2(\theta(2-p)^2-p^2)}{p(1-p)(2-p)}$.

\noindent \textbf{Case 1: $c_{\star}\in(0,1)$.} The maximum $f_{\rm max}$ of $f(x,y,z,c)$ subject to our constrains is then given by
\[
f_{\rm max}=\frac{8(p-1)}{(2-p)^2}-4\theta\log\left(\frac{p}{2-p}\right),
\]
so that the maximum of $q(x,y,z,c)$ is
\[
q_{\rm max}=\exp{\left\{-a\log n\left(\frac{4(1-p)}{(2-p)^2}+2\theta\log\left(\frac{p}{2-p}\right)+o(1)\right)\right\}}.
\]
Set
\[
f_{\rm {c_\star}-stop}(a,p,\theta):=a\left(\frac{4(1-p)}{(2-p)^2}+2\theta\log\left(\frac{p}{2-p}\right)\right).
\]
By Lemma~\ref{lemma: first and second moment}(ii), we have that if $f_{\rm {c_\star}-stop}(a,p,\theta)<1$, then w.h.p.\ a $c_{\star}r$-island occurs somewhere on the circle, and we do not have full percolation. On the other hand if $f_{\rm c-stop}(a,p,\theta)>1$, then by Lemma~\ref{lemma: first and second moment}(i) w.h.p.\ there are no $cr$-islands for any $c$ and w.h.p.\ we have
full percolation (since the starting, growing and first and second threshold conditions are satisfied).

\noindent \textbf{Case 2: $c_{\star}\leq 0$.} When $c_{\star}\le 0$, the optimum legitimate island is the $0$-island, and we recover the necessary condition for full percolation from Proposition~\ref{proposition: 0-stop}.
namely that $f_{\rm 0-stop}(a,p,\theta)>1$. (This explains the earlier choice of notation.) Note that the analysis in this section reveals
that this necessary condition for full percolation $f_{\rm 0-stop}(a,p,\theta)>1$ is sufficient when $c_{\star}\le 0$, but not when $c_{\star}\in(0,1)$.
This is because, when $c_{\star}\in(0,1)$, the condition $f_{\rm 0-stop}(a,p,\theta)>1$ is strictly weaker than $f_{\rm c_{\star}-stop}(a,p,\theta)>1$,
since a non-degenerate $c_{\star}r$-island is more likely to occur than the degenerate $0$-island from Proposition~\ref{proposition: 0-stop}.

\noindent \textbf{Case 2: $c_{\star}\geq 1$.} When $c_{\star}\ge 1$, the optimum island has $c=1$, and a separate calculation with Lagrange multipliers shows that the optimum choices of
$x$ and $z$ ($y$ no longer features in the island) are
\[
(x,z)=\left(\frac{1+4\theta p-2\sqrt{1+8\theta p}}{2p^2},\frac{\sqrt{1+8\theta p}-1}{2p}\right)
=\left(\left(\frac{\sqrt{1+8\theta p}-1}{2p}\right)^2,\frac{\sqrt{1+8\theta p}-1}{2p}\right).
\]
These choices lead to a maximum of $q(x,y,z,c)$ of
\[
q_{\rm max}=\exp{\left\{-a\log n\left(1-\theta+\frac{p}{2}-\frac{\sqrt{1+8\theta p}-1}{4p}
+2\theta\log\left(\frac{\sqrt{1+8\theta p}-1}{2p}\right)+o(1)\right)\right\}},
\]
so that writing
\[
f_{\rm 1-stop}(a,p,\theta)=a\left(1-\theta+\frac{p}{2}-\frac{\sqrt{1+8\theta p}-1}{4p}
+2\theta\log\left(\frac{\sqrt{1+8\theta p}-1}{2p}\right)\right),
\]
we see that, when $c_{\star}\ge 1$, if $f_{\rm 1-stop}(a,p,\theta)<1$, then by Lemma~\ref{lemma: first and second moment}(ii), w.h.p.\ an $r$-island occurs somewhere on the circle, and we do not have full percolation. On the other hand if $f_{\rm 1-stop}(a,p,\theta)>1$, then by Lemma~\ref{lemma: first and second moment}(i) w.h.p.\ there are no $cr$-islands for any $c\in [0,1]$ and w.h.p.\ we have
full percolation (since the starting, growing and first and second threshold conditions are satisfied).

To summarize, suppose we are given $p$ and $\theta$, and we wish to check whether full percolation occurs, assuming that the
starting, growing and second threshold conditions are satisfied. First, we calculate
\[
c_{\star}=c_{\star}(p,\theta)"=\frac{2(\theta(2-p)^2-p^2)}{p(1-p)(2-p)}.
\]
Then, depending on the value of $c_{\star}$, the condition for full percolation is as displayed below.

\[
\boxed{{\rm\bf Full\ percolation}\ \ 1<
\begin{cases}
a(1-\theta+\theta\log\theta)&\text{\rm if $c_{\star}\le 0$}\\
a\left(\frac{4(1-p)}{(2-p)^2}+2\theta\log\left(\frac{p}{2-p}\right)\right)&\text{\rm if $0<c_{\star}<1$}\\
a\left(1-\theta+\frac{p}{2}-\frac{\sqrt{1+8\theta p}-1}{4p}
+2\theta\log\left(\frac{\sqrt{1+8\theta p}-1}{2p}\right)\right)&\text{\rm if $c_{\star}\ge 1$}\\
\end{cases}}
\]

The lack of islands is necessary for full percolation, and, given that the other four conditions are met, it is also sufficient.
To see this, suppose that the second threshold condition is satisfied. Then, once the infection has stopped spreading, the Gilbert
graph on the uninfected vertices splits into several components. Take one such component $C$, with extreme vertices $u$ and $v$.
If $u$ and $v$ lie at distance at least $2r$, we deduce the existence of a pair of blocking sets with no infection between them,
a situation excluded by the second threshold condition. Thus $u$ and $v$ lie at distance less than $2r$. If they lie at distance
less than $r$, then the likeliest configuration is that they are part of an island. If they lie at distance $(2-c)r$ with  $r\leq (2-c)r\leq 2r$, then we must have
five intervals of lengths $r,(1-c)r,cr,(1-c)r$ and $r$ (from left to right, with $u$ the rightmost endpoint of the first interval and $v$ the leftmost endpoint of the last interval), containing (respectively) $z_1r$ points,
$y_1p(1-c)r$ initially infected points, $xpcr$ initially infected points, $y_2p(1-c)r$ initially infected points and $z_2r$ points, where
\begin{align*}
z_1+(y_1+y_2)p(1-c)+xpc<2\theta && z_2+(y_1+y_2)p(1-c)+xpc<2\theta.
\end{align*}
It is easily checked that the probability of such a configuration is maximised by taking $y_1=y_2=y$ and $z_1=z_2=z$.  A short calculation with Lagrange multipliers then shows that the most likely way this can happen is when $c=0$ (corresponding to a blocking set) or $c=1$ (corresponding to an $r$-island).  In summary, if the second threshold condition is satisfied, then the most likely way for a small component of uninfected vertices to arise is in the shape of an island, and thus the threshold for
islands is also the threshold for full percolation.

\subsection{Summary}\label{section: 1d summary}

Figure~\ref{phase} is a phase diagram for the case $a=2$, showing the different regimes discussed above. If the starting condition is not
met, there is no growth -- no new infections take place. If the starting condition is met, but the global growth condition is not met,
then the growth will be confined to regions of width $\Theta(\log n)$ -- we term this `logarithmic growth'. If these two conditions,
but not the first threshold condition, are met, we are in the region of `polynomial growth'; if the first (but not the second)
threshold condition is met, then we have `polynomial obstructions'. Finally, if all these conditions are met, the threshold for
full percolation separates the region of `logarithmic obstructions' (islands) from that of full percolation. Note that the term
logarithmic (respectively, polynomial) obstruction refers to the size of the obstruction, rather than to the total proportion of
the circle occupied by such obstructions. It is possible that logarithmic obstructions will dominate polynomial ones for certain
values of the parameters, but, to keep things simple, we do not pursue this question further here.

It is convenient to visualize the spread of infection as a continuous process, starting in certain random places, and then spreading
in the form of expanding arcs, before possibly being blocked by blocking sets or islands. Indeed, consider a typical `interface' $I$
at the place where an infection is blocked. To the left of $I$, we have full infection, while to the right of $I$, only initially
infected points are infected. The interface $I$ itself consists of uninfected points alternating with (initially infected points
and) newly infected points. Now the probability that the length $d_I$ of $I$ exceeds $x$ decays exponentially in $x$, so long
interfaces are comparatively rare. Thus they may be conveniently visualized as single points marking the boundaries of infections,
and the frozen state of the model may be visualized as a collection of random arcs on a circle.

\begin{figure}[htp!]
\centering
\includegraphics[width=\linewidth]{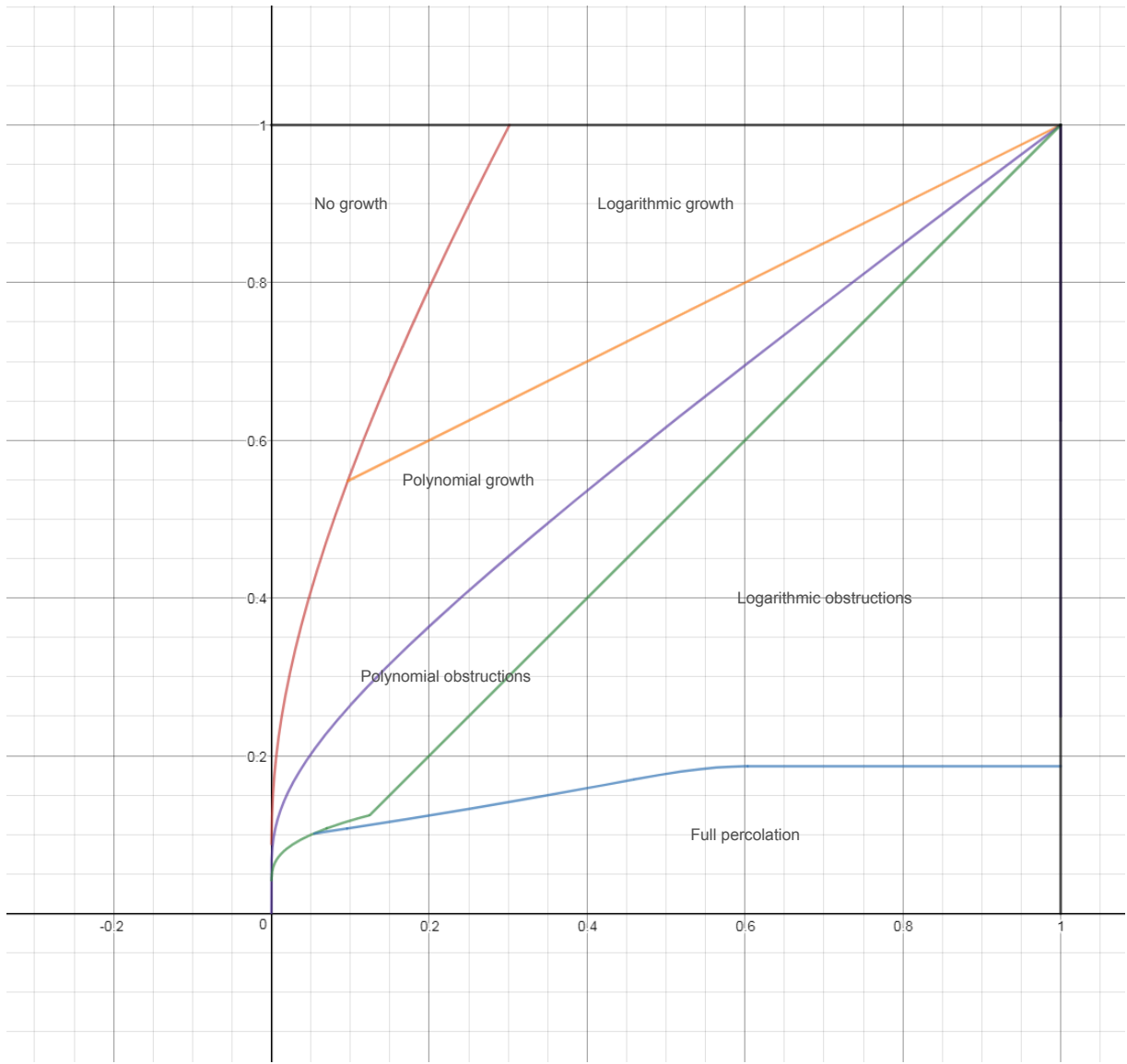}
\caption{\small Phase diagram for $a=2$}
\label{phase}
\end{figure}
	
\section{Concluding remarks}\label{section: concluding remarks}
We end this paper with some remarks.
	\begin{itemize}
		\item We stated and proved our main results for the Bradonji\'c--Saniee model on the $2$-dimensional torus $T_n:=T_n^2$ rather than the square $S_n:=[0,\sqrt{n}]^2$ to avoid having to consider boundary effects. We note here that the boundary effects do not play any significant role in the Bradonji\'c--Saniee model on $S_n$, apart from complicating the analysis relative to $T_n$.

		Indeed, since the presence of a boundary does not help the infection, the analogue of Theorem~\ref{theorem: (1+p)/2 threshold}(ii) immediately follows for $S_n$.

		For the analogue of Theorem~\ref{theorem: (1+p)/2 threshold}(i) in the square, one must modify the proof. Instead of considering the whole of $S_n$, one should instead focus on the subsquare $S_n'$ consisting of all rough tiles at graph distance at least $7$ from a `boundary tile' in the auxiliary graph $H$, and one must employ the Bollob\'as--Leader edge-isoperimetric inequality for the square grid rather than the toroidal grid. Since $o(\vert \mathcal{R}\vert)$ rough tiles lie close to the boundary of $S_n$, this results in some changed constants as well as the presence of a connected component in $G_{n,r}[\mathcal{P}\setminus A_{\infty}]$ of diameter $\Omega(\sqrt{n})$ and order $O(\sqrt{n\log n})$ all around the boundary of the square. Thus the bound on the Euclidean diameter of components of forever-uninfected points will only hold for components all of whose points are at a sufficiently large multiple of $\sqrt{\log n}$ away from the boundary.

\item We expect that Theorem~\ref{theorem: (1+p)/2 threshold} also holds in dimension $d\geq 3$. However some work will be required to adapt our arguments to the higher dimensional setting. For instance, we note that the proof of
Proposition~\ref{prop: bound on curve-meeting tiles}, and both the statement and the proof of Lemma~\ref{lemma: large boundary implies large white comp in H7} (involving dual cycles) use $2$-dimensional ideas and would need to be handled differently in dimension $d\ge 3$.

		\item In contrast with the constant infection threshold work of Candellero and Fountoulakis~\cite{CandelleroFountoulakis16} and on hyperbolic random geometric graphs and of Koch and Lengler~\cite{KochLengler21} on inhomogeneous random graphs, the behaviour for the Bradonji\'c--Saniee model appears to be rather different: for $p<\theta$ it is possible for some vertices to be infected in the first round of the process but for the process to only spread the infection to $\vert A_{\infty}\setminus A_0\vert =o(n)$ vertices. Indeed, if $\frac{1+p}{2}<\theta$ and  $f_{\mathrm{start}}<1$ both hold, this occurs w.h.p.\ as a consequence of Theorem~\ref{theorem: (1+p)/2 threshold}(ii) and Proposition~\ref{prop: start}. Both of these conditions can be satisfied simultaneously provided we pick $a>1$ sufficiently small and $p<1$ sufficiently large.
	\end{itemize}
Our work also leaves a number of questions open.
\begin{itemize}
	\item
Foremost among these is the problem of proving Conjecture~\ref{conjecture: either almost no percolation or almost percolation}. Our intuition is that one may need to consider far more subtle tile colourings to achieve this. Explicitly, rather than colour a fine tile $T$ red if all its points eventually become infected, one should plausibly instead fix some large integer constant $Q=Q(K)$ and label $T$ with $(i_T/Q, j_T/Q)$ if it contains between $\frac{i_T}{Q}\vert T\vert$ and
$\frac{i_T+1}{Q}\vert T\vert$ points of $A_{\infty}$, and between $\frac{j_T}{Q}\vert T\vert$ and
$\frac{j_T+1}{Q}\vert T\vert$ points of $\mathcal{P}\setminus A_{\infty}$, where $i_T, j_T$ are non-negative integers. A similar idea was used in~\cite{BalisterBollobasSarkarWalters09}. One would then need to show that for most fine tiles, $(i_T/Q, j_T/Q)$ is close to either $(1,0)$ or to $(p,1-p)$, and to analyse the component structure of fine tiles $T$ with $(i_T/Q, j_T/Q)$ close to $(1,0)$, deploying more refined versions of the arguments from the proof Theorem~\ref{theorem: giant spreads} to show such tiles either form an overwhelming majority or an overwhelming minority of tiles.

\item Another intriguing problem left open by Theorem~\ref{theorem: (1+p)/2 threshold} is what happens if $\theta=\frac{1+p}{2}\pm o(1)$  and we adversarially infect a ball of radius $Cr$? Here it is not clear to us what kind of behaviour one should expect. Could it be for example that both $\vert A_{\infty}\setminus A_0\vert$ and $\vert\mathcal{P}\setminus A_{\infty}\vert$ have order $\Omega(n)$ ? The analysis required to understand the typical behaviour in this regime is likely to be delicate.

\item Given that bootstrap percolation has been studied on $\mathbb{Z}^d$, it would be natural to consider bootstrap percolation on  a supercritical Gilbert disc graph in the plane, i.e.\ on the host graph $G_r(\mathbb{R}^2)$, where $\pi r^2$ is a sufficiently large constant (to ensure $G_r(\mathbb{R}^2)$ almost surely contains an infinite connected component), and the infection threshold is some constant $T$. For what $p\geq 0$ does an initial infection probability guarantee the almost sure emergence of an infinite connected component of eventually infected vertices?

\item It would also be natural to study an analogue of the Bradonji\'c--Saniee model on the $k$-nearest neighbour random geometric graph model, or on models of random geometric graphs in the torus allowing for the presence of some long distance edges by superimposing e.g.\ a sparse Erd{\H o}s--R\'enyi random graph or a configuration model on top of the Gilbert random geometric graph. 

\item Conjectures~\ref{conjecture: symmetric growth} and~\ref{conjecture: islands} provide an obvious area where there is considerable room for improvement on the results of the present paper. Progress on these fronts may require a better understanding of the behaviour of the solutions to the optimisation problems used to define the thresholds $\theta_{\mathrm{local}}$ and $\theta_{\mathrm{islands}}$ (our conjectured thresholds for almost percolation and full percolation respectively).

One question of particular interest to us is whether, given a fixed  triple $(a,p, \theta)$, one can identify the `critical radius' for local infections or islands. To be more precise, we expect that there may be a constant $C> 0$ such that spreading a local infection to radius $Cr$ is `harder' (less likely) than both spreading it to a radius $(C-\varepsilon )r$ and spreading an infection from a ball of radius $Cr$ to a ball of radius $(C+\varepsilon)r$. Our heuristic is that while the local outbreak is small it requires unlikely point configurations to spread radially outwards, but that once it gets sufficiently large then `global' behaviour kicks in and the infection is carried forward by its momentum without requiring a high density of infected points near its boundary.

A motivation for determining such a critical radius would be the possibility of explicitly determining and computing $\theta_{\mathrm{local}}$. Similarly, we expect that there is an optimal radius for islands resisting full percolation, and determining that optimal radius would help give a more explicit form for $\theta_{\mathrm{islands}}$.

\item Finally, given the motivations for studying bootstrap percolation, it would be natural to consider variants of the Bradonji\'c--Saniee model where e.g.\ some vertices are vaccinated or have a higher threshold for infection. See for example~\cite{EinarssonLenglerMoussetPanagiotouSteger19} for some recent work in this vein.

\end{itemize}

\section*{Acknowledgements}
Research on this project was done while the second author visited the first author in Ume{\aa} in Spring 2018 with financial support from STINT Initiation grant  IB 2017-7360, which the authors gratefully acknowledge.

\section*{Appendix A: analysis of $\theta_{\mathrm{islands}}$ via the Euler--Lagrange equations}
		\begin{figure}[htp!]
			\centering
			\includegraphics[width=0.5\linewidth]{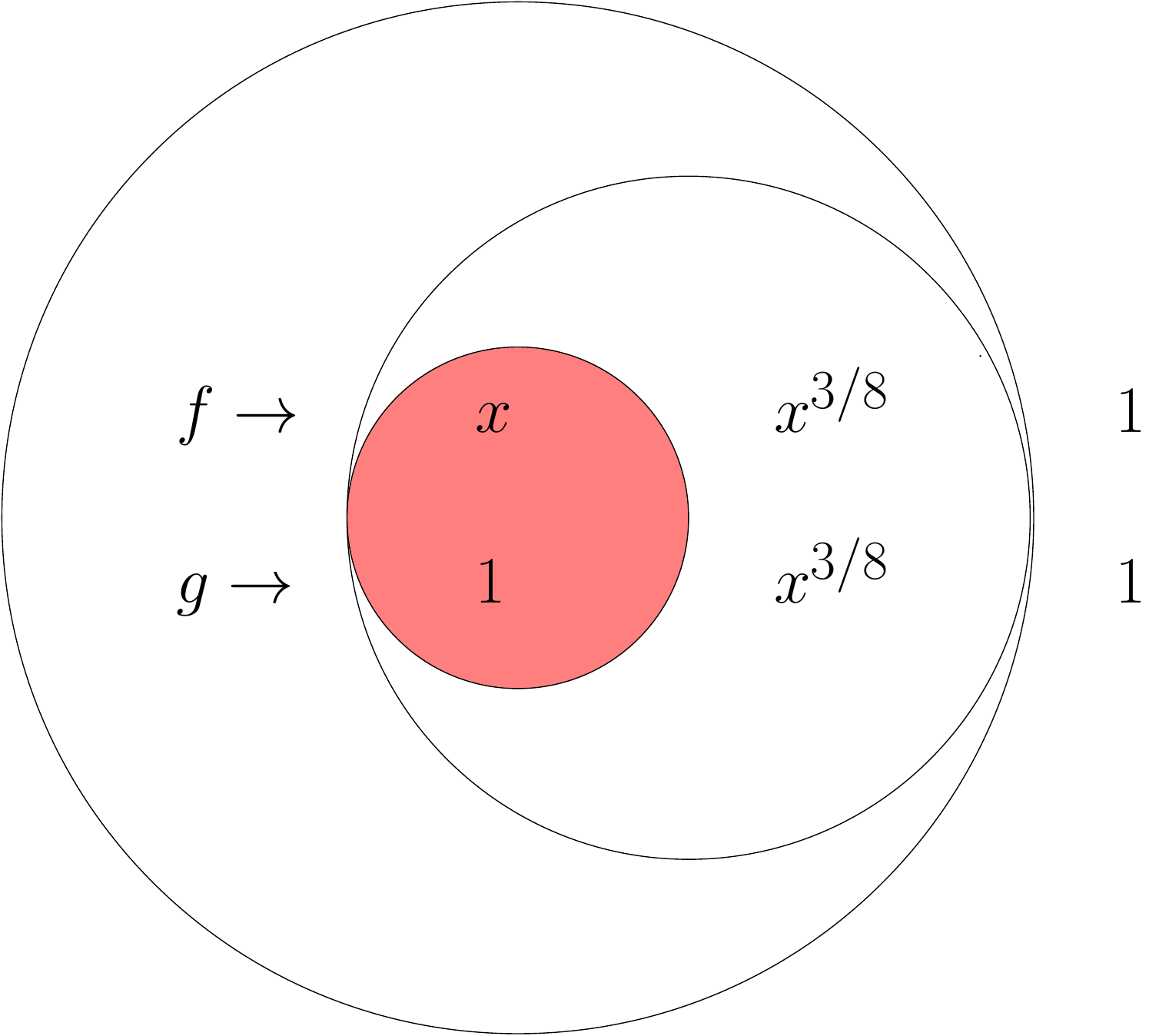}
			\caption{\small A simple obstruction to full percolation. Here $f$ and $g$ denote the density of initially and initially infected points in the various regions. The centre of the largest circle is $\mathbf{0}$ and $\mathbf{u}$ sits on the boundary of the shaded red disc of radius $\tau$ about $\mathbf{0}$; circles of of radius $t$ about $\mathbf{0}$ and $1$ about $\mathbf{u}$ are represented in the picture.}
			\label{simple}
		\end{figure}
		Recall that $\theta_{\mathrm{islands}}=\theta_{\mathrm{islands}}(a,p)$ is the threshold for the appearance of symmetric islands. In this appendix, we bound this threshold using the Euler-Lagrange equations, applied to~\eqref{eq: island integral}.

		As a warm-up, consider the following island, illustrated in Figure~\ref{simple}.
		We consider two concentric circles of radii $r/2$ and $3r/2$,
		forming a disc and an annulus of areas $A_x$ and $A_z$ respectively. Suppose that the inner disc contains $xpA_x$ initially infected points,
		and that the annulus contains $zA_z$ points. Then, even if all the points in the annulus become infected, no points in the
		inner disc become infected, as long as
		\[
		G(x,z)=3z+px<4\theta.
		\]	
The probability of the configuration is
		\[
		q=\exp\left\{\frac{a\log n}{4}F(x,z)\right\},
		\]
		where
		\[
		F(x,z)=8(z-1-z\log z)+p(x-1-x\log x),
		\]
		and so we must maximize $F(x,z)$ subject to $G(x,z)=4\theta$. A short
		calculation with Lagrange multipliers shows that $z=x^{3/8}$, while $x$ is determined by the equation
		\[
		3x^{3/8}+px=4\theta.
		\]
		Once $x$ has been determined, the threshold is given by
		\[
		4+a\left\{8(x^{3/8}-1-\tfrac{3}{8}x^{3/8}\log x)+p(x-1-x\log x)\right\}=0.
		\]
The lack of such islands is a necessary condition for full percolation. To refine this condition, we will consider a radially symmetric island, with
continuously varying densities of infected and uninfected points, as in Section~\ref{subsection: islands}. Such an island generalizes the one just considered.
To simplify the exposition and calculations, we will use a slightly different normalization (scaling by $r$ rather than $\sqrt{a\log n}$) and notation from that in Section~\ref{subsection: islands}.

Specifically, consider the following island, centred at $\mathbf{0}$. At distance $tr$ from $\mathbf{0}$, the densities of infected and uninfected points
are $pf(t)$ and $(1-p)g(t)$ respectively, where $f$ and $g$ satisfy $f(t)\to 1$ and $g(t)\to 1$ as $t\to\infty$. Rescaling by $r$,
for $\mathbf{u} \in\partial B_{t}(\mathbf{0})$, write
\[
A(t)=B_{1}(\mathbf{u})\cap B_{t}(\mathbf{0}){\rm \ \ \ and\ \ \ } B(t)=B_{1}(\mathbf{u})\setminus B_{t}(\mathbf{0}),
\]
exactly as before. Then the condition for an infection to stop spreading in towards $\mathbf{0}$ around the circle $\partial B_{\tau r}(\mathbf{0})$ is that
\[
p\int_{A(\tau)\cup B(\tau)}f\,dA+(1-p)\int_{B(\tau)}g\,dA< \pi\theta.
\]

At this point we need to make some simplifying assumptions, which, while restricting the dimensions of the island (and thus possibly rendering
it suboptimal), greatly facilitate calculations. First, we will assume that $\tau\le 1$. Second, we will assume that the optimal function $f$
is increasing for $t\ge 0$,
and that the optimal $g$ is increasing for $t\ge\tau$ (an assumption which will be consistent with the solution we obtain).
The reason for these assumptions
is that, without them, points in the interior of $B_{\tau r}(\mathbf{0})$ might see more infected neighbours than points on the boundary $\partial B_{\tau r}(\mathbf{0})$.
It is then conceivable that the infection could spread from the inside of the island outwards, causing the entire island to succumb to infection.

Given these assumptions, we must maximize
\[
q(f,g)=p\int(f-1-f\log f)\,dA+(1-p)\int(g-1-g\log g)\,dA
\]
subject to the above constraint, for fixed $\tau\in[0,1]$. The sets $A(\tau)$ and $B(\tau)$, as well as the optimal solutions $f$ and $g$,
are illustrated in Figure~\ref{small} (for $\tau\le1/2$) and Figure~\ref{big} (for $1/2\le\tau\le 1$).

\begin{figure}[htp!]
	\centering
	\includegraphics[width=\linewidth]{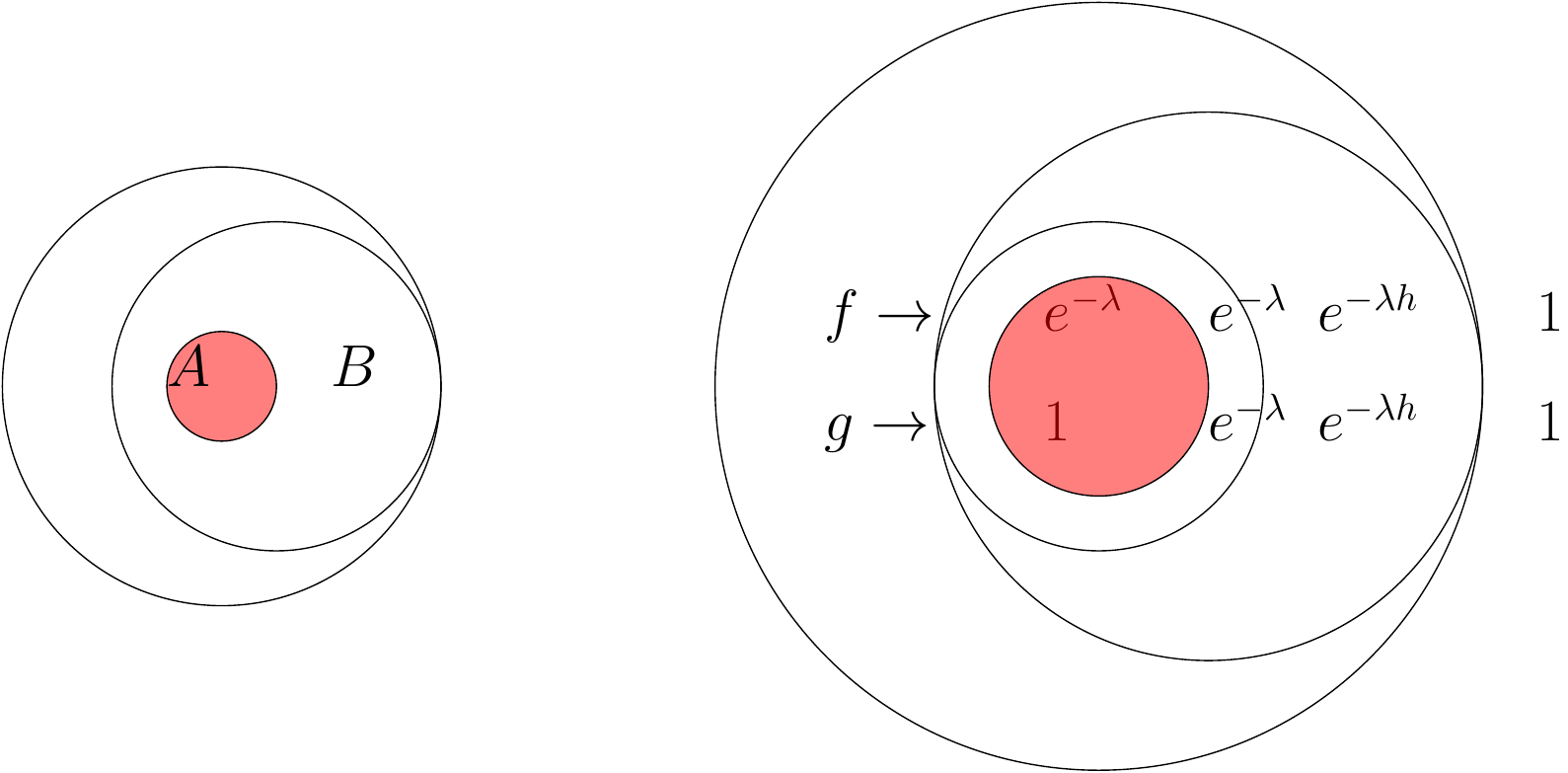}
	\caption{\small An obstruction with $\tau\le1/2$}
	\label{small}
\end{figure}

\begin{figure}[htp!]
	\centering
	\includegraphics[width=\linewidth]{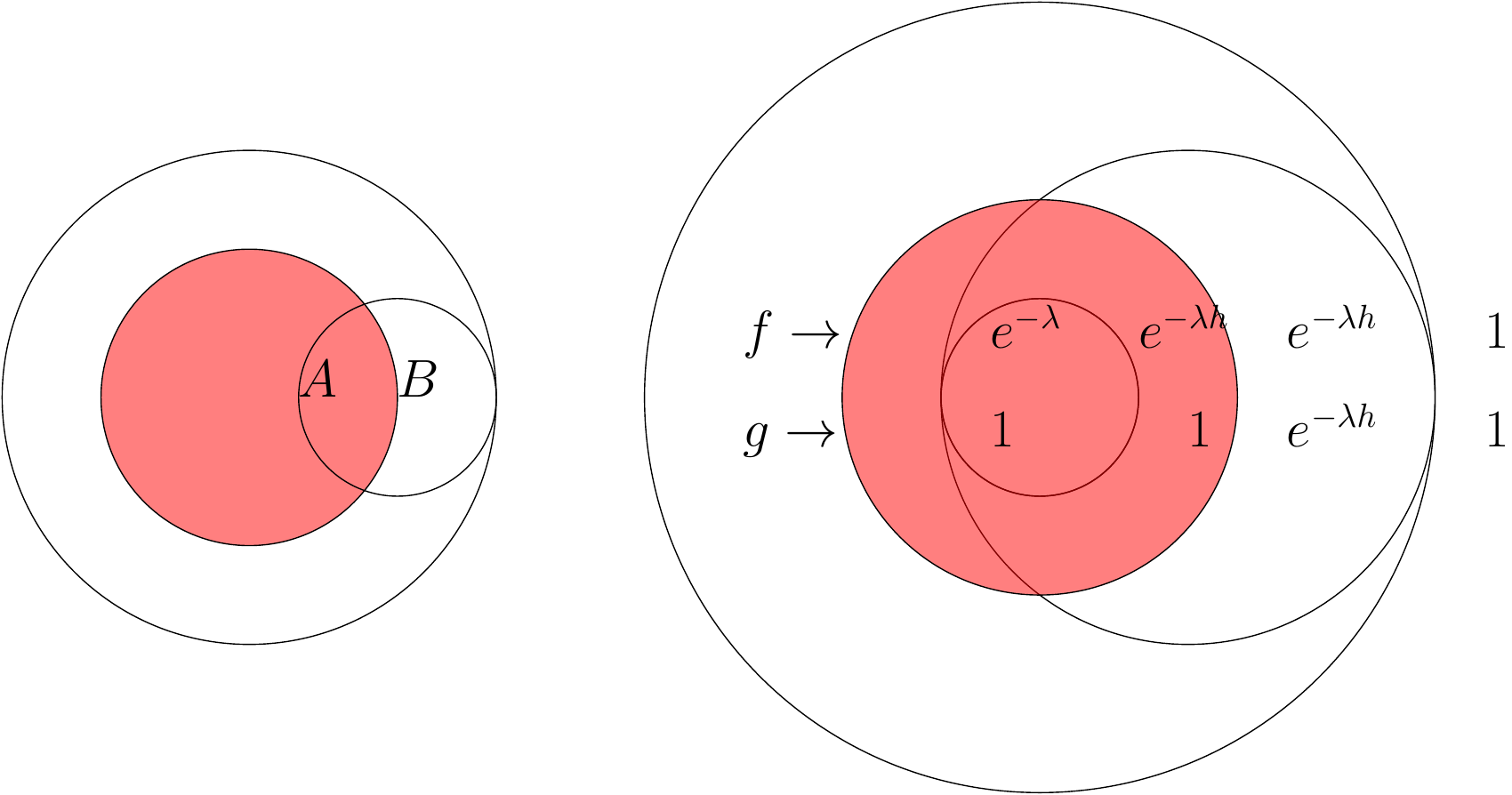}
	\caption{\small An obstruction with $1/2\le\tau\le1$}
	\label{big}
\end{figure}

Using the method of Lagrange multipliers, we maximize
\begin{align*}
L(f,g)= p\int(f-1-f\log f)\,dA &+(1-p)\int(g-1-g\log g)\,dA\\
&-\lambda\left(p\int_{A(\tau)\cup B(\tau)}f\,dA+(1-p)\int_{B(\tau)}g\,dA\right).
\end{align*}
To evaluate the last two integrals, we introduce the function $h(t)$, defined by the equation
\[
h(t)=
\begin{cases}
1&\textrm{if }t\leq 1-\tau\\
\tfrac{1}{\pi}\cos^{-1}\left(\frac{t^2-\tau^2-1}{2\tau}\right)&\textrm{if }1-\tau< t\le 1+\tau \\
0&\textrm{if }1+\tau < t.\\
\end{cases}
\]
The function $h(t)$ is the proportion of the ($1$-dimensional) circle of radius $t$, centred at $\mathbf{0}$, that lies in the ($2$-dimensional) ball $B_{1}(\mathbf{u})$, where $\mathbf{u}\in\partial B_{\tau}(0)$. 
Using this notation, we have
\begin{align*}
(2\pi)^{-1}L(f,g)
&=p\int_0^{\infty}f(t)-1-f(t)\log [f(t)]\,dt+(1-p)\int_0^{\infty}g(t)-1-g(t)\log [g(t)]\,dt\\
&\,\,-\lambda\left(p\int_{0}^{\infty}f(t)h(t)\,dt+(1-p)\int_{\tau}^{\infty}g(t)h(t)\,dt\right)\\
&=p\int_0^{\infty}f(t)-1-f\log [f(t)]\,dt+(1-p)\int_0^{\infty}g(t)-1-g(t)\log [g(t)]\,dt\\
&\,\,-\lambda\left(p\int_0^{1-\tau}f(t)\,dt+p\int_{1-\tau}^{1+\tau}f(t)h(t)\,dt+(1-p)\int_{\tau}^{1+\tau}g(t)h(t)\,dt\right)\\
&=\int_0^{\infty}F(f,g,t)\,dt.
\end{align*}
The Euler--Lagrange equations reduce in this case to $\partial F/\partial f=0$ and $\partial F/\partial g=0$.
These equations have the solution
\[
f(t)=
\begin{cases}
\exp(-\lambda)&t\le 1-\tau\\
\exp(-\lambda h(t))&1-\tau\le t\le 1+\tau\\
1&t\ge 1+\tau,\\
\end{cases}
\]
and
\[
g(t)=
\begin{cases}
1&t\le\tau\\
\exp(-\lambda)&\tau\le t\le \max(\tau,1-\tau)\\
\exp(-\lambda h(t))&\max(\tau,1-\tau)\le t\le 1+\tau\\
1&t\ge 1+\tau,\\
\end{cases}
\]
where $\lambda$ is determined from the constraint
\[
p\int_{0}^{\infty}2\pi f(t)h(t)\,dt+(1-p)\int_{\tau}^{\infty}2\pi g(t)h(t)\,dt=\pi\theta.
\]
For a given $\tau$, this allows us to compute $q_{\mathrm{max}}(\tau)$. Finally, we optimize over $\tau$, and set
\[
\sup_{\tau} aq_{\rm max}(\tau)=-\pi
\]
to determine the $\theta$-value of the threshold for the disappearance of islands. Note that the warm-up example (in which $\tau=1/2$) is in some sense a step-function approximation to this solution. As $p\to1$, we expect that the optimal value of $\tau$ tends to zero.

Note the analysis here provides an alternative derivation of the islands in the case $d=1$. We need only replace $h(t)$ by its one-dimensional version $h_1(t)$ given by, 
\begin{align*}
h_1(t)=
\begin{cases}
1& \textrm{if } \tau\leq 1-t\\
1/2& \textrm{if } 1-\tau <  t \leq 1+\tau\\
0& \textrm{if }  1+\tau< t.\\
\end{cases}
\end{align*}
\section*{Appendix B: lower bounds for the threshold for full percolation}
In this appendix, we sketch out some more details of the case analysis and Lagrangian optimisation that can be used to give rigorous (but almost certainly non-optimal) lower bound on the full percolation threshold, continuing the discussion at the end of Section~\ref{subsection: islands} (with the same notation).

Draw discs of radius $r$ around $\mathbf{u}$ and $\mathbf{v}$, resulting in one of the configurations in Figure~\ref{stopping}, according to whether $\delta:=C\sqrt{\log n}/r$ lies in the range $(0,1],(1,2]$ or $(2,\infty)$. Here $x$ and $zp$ denote the density of points and initially infected points of the process in the corresponding region respectively, and similarly $y$/$py$ denote the density of points/initially infected points in the corresponding region. We know the points $\mathbf{u}$ and $\mathbf{v}$ do not become infected as part of the bootstrap percolation process, even though all points outside the lune $L$ are infected. Our aim is to find the likeliest set of point densities in the appropriate regions making this event possible. We will then calculate the probabilities
of these likeliest configurations, and deduce that if the fixed triple $(a,p, \theta)$ is such that all such configurations have probability $o(1/n)$, no island can exist w.h.p..


Assuming that the densities of infected points in the various regions are as indicated, the most likely obstructions can be identified by
optimizing $x,y,z$ and $\delta$ for each case. Write $L(\delta)$ for the area of the lune formed by two unit discs whose centres lie at distance $\delta$, so that
\[
L(\delta)=\pi-\gamma-\sin\gamma,
\]
where $\gamma$ is given by
\[
\frac{\delta}{2}=\sin\left(\frac{\gamma}{2}\right).
\]
Also, write $M(\delta)$ for the area of the lune formed by a disc $B_{\delta}$ of radius $\delta$ and a unit disc $D$ whose center lies on the
perimeter of $B_{\delta}$, so that
\[
M(\delta)=\delta^2(\pi-\beta-\sin\beta)+\frac{\beta}{2},
\]
where $\beta$ is given by
\[
2\delta=\sec\left(\frac{\beta}{2}\right).
\]

\begin{figure}[htp!]
\centering
\includegraphics[width=0.85\linewidth]{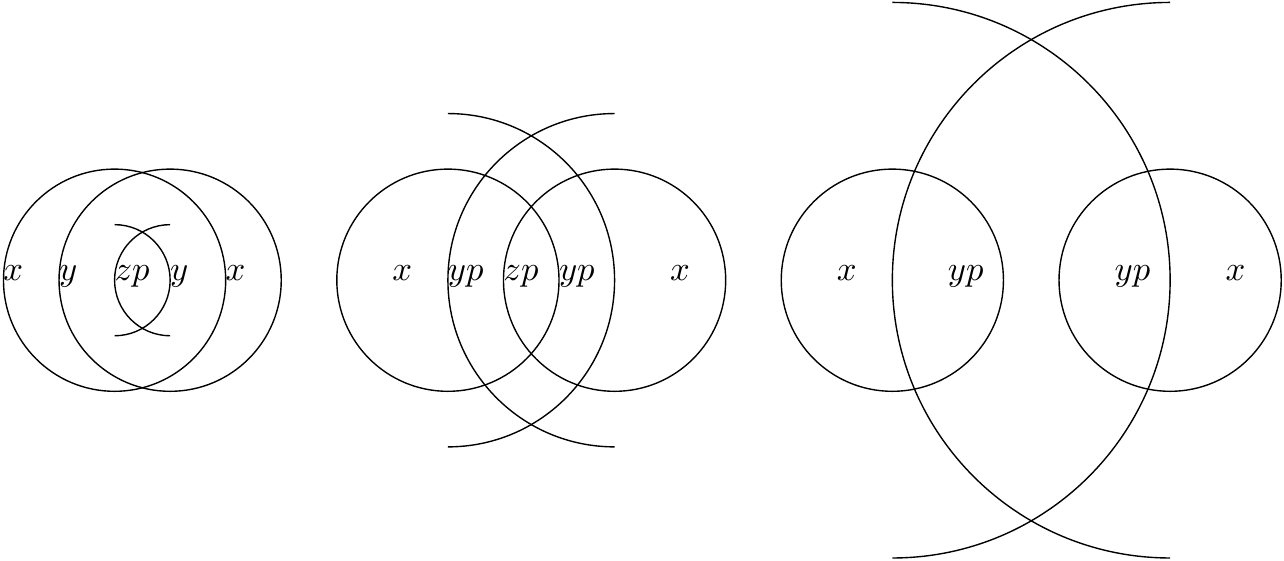}
\caption{\small Stopping an infection in the cases $0<\delta<1,1<\delta<2,\delta>2$}
\label{stopping}
\end{figure}


\noindent{\bf Case 1: $\delta>2$.} In this case, the island occurs with probability at most
\[
q_1=\exp(2r^2f_1(x,y,\delta)+o(r^2))=\exp\left\{\frac{2a\log n}{\pi}f_1(x,y,\delta) +o(\log n)\right\},
\]
where
\[
f_1(x,y,\delta)=(\pi-M(\delta))(x-1-x\log x)+pM(\delta)(y-1-y\log y),
\]
and where we also need
\[
g_1(x,y,\delta)=(\pi-M(\delta))x+pM(\delta)y<\pi\theta
\]
to prevent $\mathbf{u}$ and $\mathbf{v}$ from getting infected. It is easy to see that this configuration is likeliest when $\delta$ is as large
as possible and when $x=y$. A quick calculation with Lagrange multipliers yields the threshold
\[
a\left\{1+p-2\theta+2\theta\log\left(\frac{2\theta}{1+p}\right)\right\}=1.
\]

\noindent {\bf Case 2: $1<\delta<2$.} In this case, the island occurs with probability at most
\[
q_2=\exp(r^2f_2(x,y,z,\delta)+o(r^2))=\exp\left\{\frac{a\log n}{\pi}f_2(x,y,z,\delta)+o(\log n)\right\},
\]
where
\[
f_2(x,y,z,\delta)=2(\pi-M(\delta))(x-1-x\log x)+2p(M(\delta)-L(\delta))(y-1-y\log y)+pL(\delta)(z-1-z\log z),
\]
and where we also need
\[
g_2(x,y,z,\delta)=(\pi-M(\delta))x+p(M(\delta)-L(\delta))y+pL(\delta)z<\pi\theta
\]
to prevent $\mathbf{u}$ and $\mathbf{v}$ from getting infected. A quick calculation with Lagrange multipliers yields that at the optimum $(x,y,z)=(x,x,x^2)$,
and that $x$ and $\delta$ are obtained by solving the equations
\begin{align*}
pL'(\delta)(x-1)+2M'(\delta)(p-1)&=0\\
x(\pi+(p-1)M(\delta)-pL(\delta))+x^2pL(\delta)&=\pi\theta,
\end{align*}
after which we set $q_2=1/n$ to get the bound on the threshold.

\noindent {\bf Case 3: $0<\delta<1$.} In this case, the island occurs with probability at most
\[
q_3=\exp(r^2f_3(x,y,z,\delta)+o(r^2)=\exp\left\{\frac{a\log n}{\pi}f_3(x,y,z,\delta)+o(\log n)\right\},
\]
where
\[
f_3(x,y,z,\delta)=2(\pi-L(\delta))(x-1-x\log x)+(L(\delta)-\delta^2L(1))(y-1-y\log y)+p\delta^2L(1)(z-1-z\log z),
\]
and where we also need
\[
g_3(x,y,z,\delta)=(\pi-L(\delta))x+(L(\delta)-\delta^2L(1))y+p\delta^2L(1)z<\pi\theta
\]
to prevent $\mathbf{u}$ and $\mathbf{v}$ from getting infected. A quick calculation with Lagrange multipliers yields that at the optimum $(x,y,z)=(x,x^2,x^2)$,
and that $x$ and $\delta$ are obtained by solving the equations
\begin{align*}
L'(\delta)(x-1)+2(p-1)(x+1)\delta L(1)&=0\\
x(\pi-L(\delta))+x^2(L(\delta)+\delta^2L(1)(p-1))&=\pi\theta,
\end{align*}
after which we set $q_3=1/n$ to get the bound on the threshold.

\subsection*{\sf Behaviour as $p\to0$ and $p\to1$}

When $p\to 0$, a routine analysis shows that the optimum $\delta$ tends to infinity (so that the threshold in Case 1 serves as the lower bound).
The threshold is thus tangent to the line
\[
a(1-2\theta+2\theta\log(2\theta))=1,
\]
which in turn shows that $\theta_{\mathrm{islands}}(a,0)>0$ for $a>1$, as illustrated in Figure 1.

When $p\to 1$, a routine analysis shows that the optimum $\delta$ tends to zero (so that the threshold in Case 3 serves as the lower bound).
The threshold is thus tangent to the line
\[
a(1-\theta+\theta\log\theta)=1,
\]
which matches the upper bound from Appendix A.
	

\section*{Appendix C: tangency results}
\subsection*{\sf Tangency at $p=0$}

In this subsection, we show that the starting threshold and the local growth threshold $\theta_{\mathrm{local}}$
are tangent at $p=0$. The idea is simple. When $p$ and $\theta=\theta_{\rm start}(p)$ are small, we consider the effect
of lowering the infection threshold from $\theta$ to $\tilde{\theta}=\theta-\delta$, with $\delta=o(\theta)$. Around a point $\mathbf{u}$ that was
newly (i.e., not initially) infected under the higher threshold $\theta$, we now see a small disc $D=B_{\varepsilon r}(\mathbf{u})$ of newly
infected points for some small constant $\varepsilon>0$ (depending on $\delta$). Next, consider a point $\mathbf{v}\in\partial D$. Once again, $\mathbf{v}$ now sees increased infection in
$D$, but also an infection rate of $p$, instead of $\theta$, in the large lune $L=B_r(\mathbf{v})\setminus B_r(x)$. However, since $\theta$
and $p$ are both very small, the vast increase in the infection rate in $D$ more than compensates for the greater area
of the lune $L$, in which the infection rate is only a little lower than that in $B_r(\mathbf{u})$. Consequently, the infection grows
outward from $\mathbf{u}$.

Going into more detail, note that $\theta_{\rm start}=(-a\log p)^{-1}(1+o(1))$ as $p\to 0$. A more careful analysis now yields
the following result.

\begin{proposition}\label{prop: grow1}
	Let $C>\pi^{-2}$, and write $\theta=\theta_{\rm start}(p)$. Then, for sufficiently small $p$, depending on $a$ and $C$, we have
	\[
	\theta-C\theta^2\le \theta_{\mathrm{local}}(p)\le \theta=(-a\log p)^{-1}(1+o(1)).
	\]
	In particular, the starting threshold and local growth threshold are tangent at $p=0$.
\end{proposition}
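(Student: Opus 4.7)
The upper bound $\theta_{\mathrm{local}}(p)\leq\theta_{\mathrm{start}}(p)$ is immediate from Proposition~\ref{proposition: inequality for theta local}, and the asymptotic $\theta_{\mathrm{start}}(p)=(-a\log p)^{-1}(1+o(1))$ as $p\to 0$ follows from dominant balance in the defining equation $f_{\mathrm{start}}(a,p,\theta_{\mathrm{start}})=1$: the term $a\theta\log(\theta/p)$ dominates and forces $\theta|\log p|\to 1/a$. So only the lower bound requires real work.

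To prove $\theta_{\mathrm{local}}(p)\geq\theta-C\theta^2$ for any fixed $C>\pi^{-2}$, write $\tilde\theta:=\theta-C\theta^2$ and pick an auxiliary parameter $\kappa\in(0,C-\pi^{-2})$, nonempty by choice of $C$. The plan is to exhibit admissible profiles $(f,g)$ in the optimisation defining $Q_{\max}(p,\tilde\theta)$ with $q(f,g)>-1/a$. Working in the scaled coordinates of Definition~\ref{I and Q definitions}, where the neighbourhood ball has unit area and thus radius $1/\sqrt{\pi}$, I will take the step profile
\begin{align*}
f(r)=(\theta-\kappa\theta^2)/p\ \text{for}\ r\leq 1/\sqrt{\pi},\qquad f(r)=1\ \text{for}\ r>1/\sqrt{\pi},\qquad g\equiv 1.
\end{align*}
The intuition is that at the \emph{original} threshold $\theta$ this profile would exactly match the starting condition up to a $\kappa\theta^2$ discount; lowering the threshold to $\tilde\theta$ creates a small amount of slack, and $\pi^{-2}$ will turn out to be the smallest value of $C$ for which this slack suffices to sustain radial growth of the infection to infinity.

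The main step is verifying $I(f,g)(t)>\tilde\theta$ for every $t\geq 0$. For $t\leq 1/(2\sqrt{\pi})$ one has $|R_{\mathrm{lens}}(t)|=\pi t^2$ (because $B_t(\mathbf{0})$ sits inside $B_{1/\sqrt{\pi}}((t,0))$) and $|B_{1/\sqrt{\pi}}((t,0))\cap B_{1/\sqrt{\pi}}(\mathbf{0})|=1-2t/\sqrt{\pi}+O(t^3)$. Unwinding the definition of $I$ then yields
\begin{align*}
I(f,g)(t)=(\theta-\kappa\theta^2)-(2\theta/\sqrt{\pi})t+\pi t^2+O(p+\theta^2 t+t^3),
\end{align*}
so that $I(f,g)(t)>\tilde\theta$ reduces, at leading order in $p$ and $\theta$, to the quadratic condition
\begin{align*}
\pi t^2-(2\theta/\sqrt{\pi})t+(C-\kappa)\theta^2>0.
\end{align*}
The discriminant of this quadratic in $t$ is $(4\theta^2/\pi)\bigl(1-\pi^2(C-\kappa)\bigr)$, which is strictly negative precisely when $\kappa<C-\pi^{-2}$: exactly the constraint placed on $\kappa$. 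For $t$ outside the small-$t$ range, $|R_{\mathrm{lens}}(t)|$ is bounded below by a positive constant while $\tilde\theta=o(1)$, so the inequality holds trivially.

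To handle the weight, write $u:=\theta-\kappa\theta^2$, so $q(f,g)=u-p-u\log(u/p)$. Differentiating in $\kappa$ gives $\partial q/\partial\kappa=\theta^2\log(u/p)$; combining $q|_{\kappa=0}=-f_{\mathrm{start}}(a,p,\theta)/a=-1/a$ with $\log(\theta/p)\sim 1/(a\theta)$ (the same dominant balance as in the first paragraph) yields $q(f,g)=-1/a+\kappa\theta/a+o(\theta)$. Since $\kappa>0$, this exceeds $-1/a$ for all sufficiently small $p$, so $Q_{\max}(p,\tilde\theta)>-1/a$ and hence $\theta_{\mathrm{local}}(p)\geq\tilde\theta$, as required. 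The main technical hurdle will be ensuring that the leading-order asymptotics of both $I(f,g)(t)$ and $q(f,g)$ genuinely dominate the error terms uniformly in the relevant parameters; once that is in hand, $\pi^{-2}$ falls out directly as the critical value of the discriminant in the quadratic, and the positivity of $\kappa$ suffices for the weight inequality to be strict.
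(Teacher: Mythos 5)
Your proof is correct and, at its core, performs the same computation as the paper's: the binding constraint for radial growth is a quadratic in the growth radius whose discriminant is negative precisely when the coefficient of $\theta^2$ exceeds $\pi^{-2}$ (your quadratic in $t$ becomes the paper's $F(\theta,\varepsilon,\delta)=\pi(1-\theta)\varepsilon^2-2\varepsilon\theta+\delta\pi$, up to lower-order terms, after the rescaling $t=\varepsilon/\sqrt{\pi}$). The packaging differs in a way worth noting. The paper argues informally about an infection spreading outward from a disc around which the initially-infected density equals $\theta_{\mathrm{start}}$, and never returns to the formal definition of $\theta_{\mathrm{local}}$ via $Q_{\max}$; in particular it does not verify that the required configuration has weight strictly above $-1/a$. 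You work directly inside Definition~\ref{def: theta sym local growth}, and your $\kappa$-discount --- taking density $\theta-\kappa\theta^2$ rather than $\theta$ in the unit-area ball, with $0<\kappa<C-\pi^{-2}$, and then checking $q(f,g)=-1/a+\kappa\theta/a\,(1+o(1))>-1/a$ via the dominant balance $\log(\theta/p)\sim 1/(a\theta)$ --- supplies exactly the slack needed to make the weight inequality strict; this is a point where your argument is more careful than the paper's. Two small repairs are needed: Definition~\ref{def: theta sym local growth} requires $f$ to be continuous, so your step profile must be mollified across a thin annulus at radius $1/\sqrt{\pi}$ (this perturbs $I$ and $q$ by $o(\theta^2)$ and is harmless), and you should record that $f=(\theta-\kappa\theta^2)/p\ge 1$, which holds since $\theta/p\to\infty$. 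Your error analysis goes through: the minimum of the quadratic is of order $\theta^2$ and is attained at $t\asymp\theta$, where the error terms are $O(p+\theta^3)=o(\theta^2)$ because $p\ll(\log(1/p))^{-2}$, and for $t$ bounded away from $0$ the lens area is bounded below by a constant while $\tilde\theta=o(1)$.
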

\begin{proof}
	Write $\tilde{\theta}=\theta-C\theta^2=\theta-\delta$. Suppose that an infection, started at $\mathbf{u}$, has spread to the disc
	$D=B_{\varepsilon r}(\mathbf{u})$. For a point $\mathbf{v}\in\partial D$, the area of the lune $L=B_r(\mathbf{v})\setminus B_r(\mathbf{u})$ is $(2\varepsilon+O(\varepsilon^3))r^2$. 
	Therefore, ignoring second-order terms, the condition for $\mathbf{v}$ to be infected (and the infection
	to spread) under the boostrap percolation model with threshold $\tilde{\theta}$ is
	\[
	\pi\varepsilon^2+(\pi(1-\varepsilon^2)-2\varepsilon)\theta+2\varepsilon p\ge \tilde{\theta}\pi=(\theta-\delta)\pi.
	\]
	Noting that $\theta\gg p$ (since $p\log p\rightarrow 0$ as $p\rightarrow 0$), we may replace this by
	\[
	F(\theta,\varepsilon,\delta)=\pi(1-\theta)\varepsilon^2-2\varepsilon\theta+\delta\pi\ge 0.
	\]
	This holds for all $\varepsilon\ge 0$ as long as
	\[
	(1-\theta)C\theta^2=(1-\theta)\delta\ge \pi^{-2}\theta^2.
	\]
	For sufficiently small $p$, this last inequality is guaranteed by the hypothesis $C>\pi^{-2}$, proving the first inequality
	in the theorem. The remaining inequalities follow from the definitions.
\end{proof}

An analysis of the argument reveals that, with $\delta=\pi^{-2}\theta^2$, a small disc $B_1=B_{\varepsilon_1r}(\mathbf{u})$ is
immediately infected, where $\varepsilon_1=\theta/2\pi$. After that, growth is progressively more difficult, in that the function
$F$ decreases, until the critical radius $\varepsilon_2=2\varepsilon_1=\theta/\pi$, at which point $F=0$. After that, $F$ increases,
and growing proceeds more and more easily.

\subsection*{\sf Tangency at $p=1$}

Next we show that the local growth threshold $\theta_{\mathrm{local}}$ is tangent to the limiting growth threshold
$\theta=\frac{1+p}{2}$ at $p=1$.
Again, the idea is simple. Let us take $p=1-\delta$, so that, along the limiting growth threshold, $\theta=1-\delta/2$.
Since we are well away from the starting threshold, the initial infection rate of $p$ will result in large circular regions
$B_{Kr}(\mathbf{u})$ (where $K=\Theta(1/\delta)$) in which the initial infection density is $\theta=1-\delta/2$, not $p=1-\delta$,
so that every point in $B_{(K-1)r}(\mathbf{u})$ will immediately become infected.

Let us now reduce the infection threshold $\theta$ from $1-\delta/2$ to $1-\delta/2-C\delta^2$, and assume that the infection
has spread to a disc $D=B_{Lr}(\mathbf{u})$, where $L\ge K-1$. Consider a point
$\mathbf{v}\in\partial D$. The point $\mathbf{v}$ will see an infection density of 1 in $A=B_r(\mathbf{v})\cap D$, and a density of at least $p$ in
$B=B_r(\mathbf{v})\setminus D$. Due to the curvature of $D$, the area of $A$ is slightly less than that of $B$, so that the
average infection density in $B_r(\mathbf{v})$ will be $1-\delta/2-C\delta^2$ instead of $\tfrac{1+p}{2}=1-\delta/2$. However,
we have lowered the threshold to $1-\delta/2-C\delta^2$ for precisely this reason, so that $\mathbf{v}$ becomes infected, and
the infection continues to spread.

Making these estimates rigorous is just a matter of bounding the Poisson distribution, as in the following proof.

\begin{proposition}\label{prop: grow2}
	Let $C>(6\sqrt{2}\pi)^{-1}$. Then, if $\delta=1-p$ is sufficiently small (depending on $a$ and $C$), we have
	\[
	\frac{1+p}{2}-C\sqrt{a}\delta^2=\frac{1+p}{2}-C(1-p)^2\le\theta_{\mathrm{local}}(p)\le\frac{1+p}{2}.
	\]
	In particular, the local growth threshold and limiting growth threshold are tangent at $p=1$.
\end{proposition}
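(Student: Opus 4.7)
The upper bound $\theta_{\mathrm{local}}(p) \leq (1+p)/2$ is furnished by Proposition~\ref{proposition: inequality for theta local}, so my plan focuses on the lower bound. Fix $C > (6\sqrt{2}\pi)^{-1}$, set $\delta = 1-p$, $\varepsilon = C\sqrt{a}\,\delta^{2}$, and $\theta = (1+p)/2 - \varepsilon$. For every sufficiently small $\delta$, I will exhibit a pair $(f,g)$ admissible in Definition~\ref{def: theta sym local growth}, i.e.\ satisfying $I(f,g)(t) > \theta$ for all $t \geq 0$ and $q(f,g) > -1/a$, thereby certifying $\theta \leq \theta_{\mathrm{local}}(p)$. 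The construction is the simplest radially symmetric one available: take $g \equiv 1$ and $f(\mathbf{x})$ equal to a constant $F$ just above $\theta/p = 1 + \delta/2 + O(\delta^{2})$ on the ball $B_{R}(\mathbf{0})$ of radius $R = T_{0} := 1/(3\pi^{3/2} C \sqrt{a}\,\delta)$, tapered continuously down to $1$ outside (the precise taper is cosmetic, since a thin tapering annulus contributes negligibly to both $I$ and $q$). Heuristically this models an abnormally dense seed inside which the density of initially infected points is boosted to $\theta$; once the seed is completely infected, the radially outward growth must sustain itself using only the ambient density $p$.

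To verify the growing condition, write $\ell(t) := |R_{\mathrm{lens}}(t)|$ and $\mu(t) := |B_{1/\sqrt{\pi}}((t,0)) \cap B_{R}(\mathbf{0})|$, so that
\[
I(f,g)(t) = p + (1-p)\ell(t) + p(F-1)\mu(t).
\]
A routine chord-area calculation (Taylor-expanding $x = \sqrt{t^{2}-y^{2}}$ near $y=0$ and integrating) gives the asymptotic lens deficit $D(t) := 1/2 - \ell(t) = (1+o(1))/(3\pi^{3/2}\,t)$ as $t \to \infty$. Three regimes then need checking: (i) for $t \leq R - 1/\sqrt{\pi}$ the boost is fully active ($\mu = 1$) and $I(t) \geq pF > \theta$; (ii) for $t \geq R + 1/\sqrt{\pi}$ the boost has vanished ($\mu = 0$) but the choice $R \geq T_{0}$ ensures $(1-p)D(t) \leq \varepsilon$, giving $I(t) = (1+p)/2 - (1-p)D(t) \geq \theta$; (iii) the narrow transition zone $t \in [R - 1/\sqrt{\pi}, R + 1/\sqrt{\pi}]$ reduces, after a short algebraic manipulation, to the inequality $(1-p)\bigl[\mu(t)/2 - D(t)\bigr] + \varepsilon(1-\mu(t)) \geq 0$, which is easily verified once $\varepsilon = o(\delta)$.

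For the cost, since $g \equiv 1$ one has $q(f,g) = p\pi R^{2}(F - 1 - F\log F) + o(1)$, and Taylor expansion yields $F - 1 - F\log F = -(F-1)^{2}/2 + O(\delta^{3}) = -\delta^{2}/8 + O(\delta^{3})$. Substituting $R = T_{0}$ gives
\[
q(f,g) = -(1+o(1)) \cdot \frac{p\pi T_{0}^{2}\,\delta^{2}}{8} = -(1+o(1)) \cdot \frac{1}{72\pi^{2} C^{2}\, a},
\]
which exceeds $-1/a$ precisely when $72\pi^{2}C^{2} > 1$, i.e.\ when $C > (6\sqrt{2}\pi)^{-1}$ --- exactly our hypothesis. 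The main obstacle in converting this outline into a rigorous proof is not any single computation above, but rather the bookkeeping required to make all inequalities strict: one must pick $F$ with a small positive slack $\eta$ above $\theta/p$ and $R$ slightly above $T_{0}$, then check that the resulting $o(1)$ corrections neither destroy the strict inequality $I > \theta$ in the transition zone nor consume the margin $72\pi^{2}C^{2} - 1$ in the cost bound. This is routine but tedious.
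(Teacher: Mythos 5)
Your proposal is correct and is essentially the paper's argument in a different costume: the paper boosts the density of initially infected points to $\theta$ (i.e.\ $\rho=1+\delta/2+O(\delta^2)$) on a disc of radius $Kr$, computes the cost per unit area as $\rho-1-\rho\log\rho\approx-\delta^2/8$ via Lemma~\ref{lemma: Paul}, and uses the lens asymptotic $M(K)=\pi/2-\tfrac{1}{3K}+O(K^{-2})$ (your $D(t)\sim\tfrac{1}{3\pi^{3/2}t}$ after rescaling) to get the same two radii and the same threshold $72\pi^2C^2>1$. The only difference is presentational --- the paper fixes the seed radius at the largest affordable value $K=\sqrt{8/a}/\delta$ and checks it exceeds the minimal growth radius, whereas you fix $R$ at the minimal growth radius $T_0$ and check affordability, and you package the whole thing as an explicit admissible pair $(f,g)$ for Definition~\ref{def: theta sym local growth}, which if anything ties the argument more tightly to the actual definition of $\theta_{\mathrm{local}}$.
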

\begin{proof}
	
	Let $C$ and $\delta$ be as in the statement of the proposition, and let $\theta=\frac{1+p}{2}-C\sqrt{a}\delta^2$.
	Consider a disc $D=B_{Kr}(\mathbf{u})$ of radius $Kr$, where $K=K(\delta)$ is large but to be determined. With an initial
	infection parameter of $p$, we expect to see
	\[
	p\pi K^2r^2=pa K^2\log n
	\]
	infected points in $D$. If we see instead
	\[
	\theta\pi K^2r^2=\theta aK^2\log n=paK^2\log n(1+\delta/2)(1+o(1))
	\]
	infections, uniformly distributed across $D$, then every point in $D'=B_{(K-1)r}(x)$ will immediately become infected.
	Setting $\rho=1+\delta/2$, the probability $q$ of this occurring is given by
	\[
	q=(1+o(1))e^{paK^2\log n(\rho-1-\rho\log\rho)}=e^{-(1+o(1))pa\delta^2K^2\log n/8}=n^{-(1+o(1))pa\delta^2K^2/8},
	\]
	by Lemma~\ref{lemma: Paul}.
	Thus we should expect to see some fully infected discs of radius $Kr$, where
	\begin{equation}\label{eq: Kone}
	K=\frac{1}{\delta}\sqrt{\frac{8}{a}}(1+o(1)).
	\end{equation}
	
	Next, we show that if $K$ is sufficiently large, the infection will continue to spread
	indefinitely. For $K\ge 1/2$, write $M(K)$ for the area of the lune formed by a disc $B_K$ of radius $K$ and a unit disc
	whose center lies on the perimeter of $B_K$. Exact formulas are given in Appendix B, but asymptotically
	\[
	M(K)=\frac{\pi}{2}-\frac{1}{3K}+O\left(\frac{1}{K^2}\right).
	\]
	Now, if the infection has already spread to all of $B_{Lr}(\mathbf{u})$, where $L\ge K-1$, the condition for it to grow further
	(and indefinitely) is that
	\[
	M(L)+(\pi-M(L))p\ge\pi\theta.
	\]
	Recall that $p=1-\delta$ and $\theta=1-\delta/2-C\sqrt{a}\delta^2$. Using the above approximation for $M(K)$, and ignoring
	second-order terms (so that we may replace $L\ge K-1$ by $L\ge K$, for instance), we can write the condition for
	the infection to spread as
	\[
	\left(\frac12-\frac{1}{3\pi K}\right)+\left(\frac12+\frac{1}{3\pi K}\right)(1-\delta)\ge 1-\frac{\delta}{2}-C\sqrt{a}\delta^2,
	\]
	or
	\begin{equation}\label{eq: Ktwo}
	K\ge\frac{1}{3\pi C\sqrt{a}\delta}.
	\end{equation}
	Combining (\ref{eq: Kone}) and (\ref{eq: Ktwo}) with the hypothesis $C>(6\sqrt{2}\pi)^{-1}$ yields the result.
\end{proof}

\end{document}